\DeclareSymbolFont{yhlargesymbols}{OMX}{yhex}{m}{n}
\DeclareMathAccent{\wideparen}{\mathord}{yhlargesymbols}{"F3}
\newcommand{\myarc}[1]{\wideparen{#1}}
\def\mcA{\mathcal{A}}
\newcommand{\val}{\operatorname{val}}
\renewcommand{\theenumi}{\roman{enumi}}%
\renewcommand{\labelenumi}{\theenumi}
\newcommand{\sumnabla}{\boxplus_{\nabla}}
\def\Null{{\operatorname{Null}}}
\def\sgn{{\operatorname{sgn}}}
\renewcommand{\geq}{\geqslant}
\renewcommand{\leq}{\leqslant}
\renewcommand{\ge}{\geqslant}
\renewcommand{\le}{\leqslant}
\renewcommand{\preceq}{\preccurlyeq}
\renewcommand{\succeq}{\succcurlyeq}
\newcommand{\phase}{\mathsf{Ph}}
\newcommand{\hgroup}{\mathcal G}
\newcommand{\kfield}{\mathcal K}
\newcommand{\ring}{\mathcal R}
\newtheorem{theorem}{Theorem}[section]
\newtheorem{proposition}[theorem]{Proposition}
\theoremstyle{definition}
\newtheorem{definition}[theorem]{Definition}
\theoremstyle{plain}
\theoremstyle{remark}
\newtheorem{example}[theorem]{Example}
\newtheorem{remark}[theorem]{Remark}
\theoremstyle{plain}
\theoremstyle{remark}
\newtheorem{note}[theorem]{Note}
\newcommand{\Real}{\mathbb R}
\newcommand{\Int}{\mathbb Z}
\newcommand{\Net}{\mathbb N}
\newcommand{\one}{\mathbb{1}}
\newcommand{\zero}{\mathbb{0}}
\newcommand{\trop}[1]{\mathcal{#1}}
\newcommand{\tG}{\trop{G}}
\newcommand{\tT}{\trop{T}}
\newcommand{\eps}{\varepsilon}
    \newenvironment{proof}{
    \smallskip
    \noindent\emph{Proof.}}{\hfill\(\Box\)
    \bigskip
    } \fi
\newcommand{\ifdef}[3]{\ifthenelse{\equal{#1}{true}}{#2}{#3}}
\definecolor{lgray}{gray}{0.90}
\def\vep{\varepsilon}
\def\mcI{\mathcal I}
 \def\mfrak#1{\mathsf{#1}}%\operatorname}
 \def\mfraka{\mathfrak}
 \def\MMod{\mfrak{M}\text{-}\mfrak{Mod}}
 \def\WMod{\mfrak{wM}\text{-}\mfrak{Mod}}
 \def\MModT{\mfrak{M}\text{-}\mfrak{ModT}}
  \def\MHModT{\mfrak{M}\text{-}\mfrak{HModT}}
 \def\MHSrT{\mfrak{M}\text{-}\mfrak{HSrT}}
 \def\FHSrT{\mfrak{G}\text{-}\mfrak{HSfT}}
\def\WMModT{\mfrak{wM}\text{-}\mfrak{ModT}}
\def\MB{\mfrak{M}\text{-}\mfrak{nSr}}
\def\MBT{\mfrak{M}\text{-}\mfrak{nSrT}}
\def\WMBT{\mfrak{wM}\text{-}\mfrak{nSrT}}
\def\MSr{\mfrak{M}\text{-}\mfrak{Sr}}
\def\MSrT{\mfrak{M}\text{-}\mfrak{SrT}}
\def\TBS{\mfrak{TBSr}}
\def\distributed{}
\def\ctw{\cdot_{\operatorname{tw}}}
\def\({\left(}
\def\){\right)}
\def\Z{{\mathbb Z}}
\def\Q{{\mathbb Q}}
\def\C{{\mathbb C}}
\def\pipe{{\underset{{\ \, }}{\mid}}}
\def\vsemifield0{$\nu$-semifield$^\dagger$}
\def\vsemiring0{$\nu$-semiring$^\dagger$}
\def\pipe1{{\underset{{1}}{\mid}}}
\def\lmod1{\mathrel  \pipe1  \joinrel \joinrel =}
\def\CFunFF1{\operatorname{CFun} (F,F)}
\def\semiring0{semiring$^{\dagger}$}
\def\Semiring0{Semiring$^{\dagger}$}
\def\Semirings0{Semirings$^{\dagger}$}
\def\semidomain0{semidomain$^{\dagger}$}
\def\semifield0{semifield$^{\dagger}$}
\def\semifields0{semifields$^{\dagger}$}
\def\vsemifields0{$\nu$-semifields$^{\dagger}$}
\def\domain0{domain$^{\dagger}$}
\def\predomain0{pre-domain$^{\dagger}$}
\def\predomains0{pre-domains$^{\dagger}$}
\def\domains0{domains$^{\dagger}$}
\def\vdomains0{$\nu$-domains$^{\dagger}$}
\def\domains0{domains$^\dagger$}
\newcommand{\etype}[1]{\renewcommand{\labelenumi}{(#1{enumi})}}
\def\pipe{{\underset{{\tG}}{\mid}}}
\def\lmod{\mathrel  \pipe \joinrel \joinrel =}
\def\pipe{{\underset{{\tG}}{\mid}}}
\def\ghost0{\operatorname{ghost}}
\newcommand{\nsets}{{\mathcal P}^*}
\theoremstyle{plain} %theorem}
\newtheorem{propt}[theorem]{Properties}
\newtheorem*{thm*}{Theorem}
\theoremstyle{plain}
\newtheorem{lem}[theorem]{Lemma}
\theoremstyle{definition}
\theoremstyle{remark}
\newtheorem{rem}[theorem]{Remark}
\newtheorem*{examp*}{Example}
\newtheorem*{examples*}{Examples}
\newtheorem*{remark*}{Remark}
\theoremstyle{definition}
\newtheorem*{defn*}{Definition}
\newtheorem*{note*}{Note}
\newtheorem{construction}[theorem]{Construction}
\newtheorem*{dig*}{Digression}
\def\R{\Real}
\def\la{\lambda}
\def\tT{\mathcal T}
\def\tTz{\tT_\zero}
\def\Hy{\mathcal H}
\def\hyzero{0}
\numberwithin{equation}{section}
\def\M0{M_{\zero}}
\def\supp{\operatorname{supp}}
\def\PS{P}
\def\Equiv{\Longleftrightarrow}
\def\semirings0{semirings$^\dagger$}
\newcommand{\nPS}[1]{\PS_{(!#1)}}
\newcommand{\nPSo}[1]{\nPS{\one}}
\begin{document}

%******************************* title ***********************************

\title {Semiring systems arising from hyperrings}
%Hyperfield and hyperring systems  and related examples}

%******************************* authors *********************************

\author[M.~Akian ]{Marianne Akian }
\address{Marianne Akian, INRIA and CMAP, \'Ecole polytechnique, IP Paris, CNRS. Address:
CMAP, \'Ecole Polytechnique, Route de Saclay, 91128 Palaiseau Cedex,
France} \email{marianne.akian@inria.fr }

\author[S.~Gaubert]{Stephane~Gaubert}
\address{Stephane~Gaubert, INRIA and CMAP, \'Ecole polytechnique, IP Paris, CNRS. Address:
CMAP, \'Ecole Polytechnique, Route de Saclay, 91128 Palaiseau Cedex,
France}
 \email{stephane.gaubert@inria.fr }

 \author[L.~Rowen]{Louis Rowen}
\address{Louis Rowen,
Department of Mathematics, Bar-Ilan University, Ramat-Gan 52900,
Israel} \email{rowen@math.biu.ac.il}

%******************************* AMS classification ***********************
\subjclass[2020]{Primary   06F20, 14T10,  15A80, 16Y20, 16Y60;
Secondary 20N20, 15A24.}

%******************************* date *************************************
\date{\today}

%******************************* keywords *********************************

\keywords{bipotent, doubling, fuzzy ring, geometric triple, height,
hyperfield,   metatangible, negation map, bimodule, semifield,
semigroup, semiring,   surpassing relation, symmetrization,
  system, tangible balancing,
triple, tropical.} %% section %%%%%%%%%%%%%%%%%%%%%%%%%%%%%

\thanks{The research of the third author was supported by the ISF grant 1994/20 and the Anshel Peffer Chair.}
\thanks{\noindent We thank O.~Lorscheid and the referee for helpful comments. }
\thanks{\noindent \underline{\hskip 3cm } \\ File name: \jobname}

%******************************* abstract *********************************

\begin{abstract}
  Hyperfields and systems are two algebraic frameworks which have been
  developed to provide a unified approach to classical and tropical structures.
  All hyperfields, and  more generally hyperrings, can be represented by systems. Conversely, we show that the systems arising in this way, called {\it hypersystems},   are characterized by certain elimination axioms.  Systems are preserved
  under standard algebraic constructions; for instance matrices and polynomials over hypersystems are systems, but not hypersystems.
  We illustrate these results by discussing several examples
  of systems and hyperfields, and constructions like matroids over systems.

\end{abstract}

%% ``We present a range of examples of systems, focusing
%%  on hyperfields, and relate them to the tropical literature, and our
%%  earlier work. Our major objective is to describe systems arising
%%  from hyperfields.''}
%\todo[inline]{LR: Your current version is excellent}
\maketitle
%\todo[inline]{SG: May be ``From systems to hyperfields'' for the title? }

%%%%%%%%%%%%%%%%%%%%%%%%%%%%%%%%% section %%%%%%%%%%%%%%%%%%%%%%%%%%%%%

{\small \tableofcontents}

% \setcounter{tocdepth}{1} {\small \tableofcontents}

%%%%%%%%%%%%%%%%%%%%%%%%%%%%%%%%% section %%%%%%%%%%%%%%%%%%%%%%%%%%%%%

\section{Introduction}

%\todo[inline]{Check the numbers in \cite{Row21}.
%One way is to put somewhere before a command like:}
%\verb!\input{SymCongruenceswithhyper240.aux}!
%\todo[inline]{and then to use the references of sections... as in
%the corresponding tex file}

\numberwithin{equation}{section}

\subsection{Overview}$ $

The original algebraic structure of tropical mathematics is the max-plus
semifield, which is the set $\R\cup\{-\infty\}$, with $\max$ under the usual order as addition, and
classical addition as multiplication. This structure appears  when considering non-archimedean valued fields, since
a non-archimedean valuation may be thought of as a weak kind of morphism from
a non-archimedean field to the max-plus semifield. This observation
is at the basis of tropical geometry, see~\cite{MaclaganSturmfels} for background, and this is one of the main motivations for the study
of tropical algebraic structures.

In an attempt to bypass the lack of negation in the tropical world, several approaches have been introduced.

A first approach is to extend the tropical structure with a ``formal negation map''.
This led to the introduction of the symmetrized max-plus semiring in~\cite{Pl,Ga}, see also~\cite{AGG2} for a more recent presentation.
This allows one to solve linear systems by a tropical analogue of the Cramer formula, and to define appropriate notions of matrix
rank~\cite{Ga,AGG1}, which turn out to be more degenerate than their classical analogues. The symmetrized tropical semifield  arises when considering images of ordered nonarchimedean fields by the map which keeps track of the valuation and sign, i.e.,
it is well adapted to the study of real tropicalizations,
which goes back to the work of Viro, see~\cite{viro,viro2001dequantization}
% -- for instance,
%as an extension of the max-plus semiring (see  also~\cite{AGG2}).
%One of the difficulties of this semiring is however that
%over the symmetrized tropical semiring, the row rank, the column rank, and the determinantal rank may differ~\cite{Ga,AGG1}.

Another extension consists of ``supertropical algebra'', introduced by Izhakian in his dissertation~\cite{izhakianthesis}, see also~\cite{IR}. This algebraic structure allows one to encode the fact that a maximum is achieved twice at least, a condition which appears  when considering ``complex'' tropicalization instead of ``real'' ones, i.e., the tropicalizations of algebraic sets over algebraically closed fields.
%In this way, supertropical algebra is adapted to the study of %tropicalizations over algebraically closed fields.
Linear algebra, and in particular the notions of rank, have been
extensively studied in the supertropical setting~\cite{IzhakianRowen2009TropicalRank}.
%It turned out that viewed in these terms,  the row rank, the column rank, and the determinantal rank
%of a matrix in supertropical algebra are all %equal,~\cite{IzhakianRowen2009TropicalRank}.
%\todo[inline]{MA: if you say this then one should say that in symmetrized max-plus algebra they differ, this is what I have added above.
%may be say also that supertropical is adapted to the construction of the objects of tropical geometry obtained as nonarchimedean amoebae of complex varieties, whereas symmetrized tropical algebra is adapted to amibae of real varieties.}

Another approach consist in using hyperstructures, i.e.,
hyperfields and hyperrings, initially developed by Krasner~\cite{krasner}, with motivation from number theory. In these structures, the addition becomes multivalued,
allowing one to deal with the absence of opposite elements
in the usual sense.
%% \todo[inline]{SG. added and updated ref to \cite{BB2} here. Replaced systematically \cite{BB} by \cite{BB2} since the latest (now published in adv. math) seems to subsume the former}
%mo//tivated this work.
%% , which
%%  can be viewed as a supplement to \cite{AGR1}.
%%\todo[inline]{LR Fine}
% \todo[inline]{SG: added the following para of discussion}
 Several important
 examples of hyperfields, and, more generally, hyperrings, can be defined in a natural
 manner, by modding out fields or rings by multiplicative subgroups~\cite{krasner,CC} (see also \Cref{prop-CC}).
 Hyperfields have been recently studied with motivations
 from tropical
 geometry~\cite{Vi,Ju,BB2} and arithmetic~\cite{CC}.
%The definition of the addition and multiplication of subsets in a hyperstructure
%is inspired by this set-up~\cite{krasner}.
Moreover, the notions of  multirings
and multifields were introduced in~\cite{M}: multifields are the same as  hyperfields, whereas multirings are slightly more general than hyperrings~\cite{Vi}.
As we shall discuss below, hyperfields
%which have been used in %number theory in \cite{Ma,krasner} and geometry in \cite{Vi,CC,Ju,BB2}
are also understood in terms of their power sets, which have addition and multiplication, but need not be a semiring because a hyperfield
need not be {doubly distributive}.

Another  structure is fuzzy rings \cite{Dr,DrW},   introduced by Dress %\cite[Definition~1.1]{DrW}
 \cite{Dr,DrW} as a
vehicle for unifying and studying matroids. Even more general structures include the notion of tracts \cite{BB2} and blueprints~\cite{Lor1,Lor2}.

Our original motivation, inspired by an oral question of Baker, was to understand exactly what was needed to carry out the proofs in tropical linear algebra, and to build a more general theory of linear algebra over suitable tropical-like structures, including hyperfields.

In 2016, the third author developed a general algebraic
framework called {\em system}, cf.~\cite{Row21},
to provide an  umbrella structure encompassing all of these theories, especially supertropical algebra, as well as ``classical'' algebra. The  structure theory specializes to each of these, with unified proofs.

%Several important examples of hyperfields, and, more generally, hyperrings, can be defined in a natural
%manner, by modding out fields or rings by multiplicative subgroups,
% see~\cite{krasner,CC} and also \Cref{prop-CC}.
%The definition of the addition and multiplication of subsets in a hyperstructure
%is inspired by this set-up~\cite{krasner}.

A system consists of a set $\tTz$ (especially a monoid)  embedded into a set $\mcA$ with more structure, including a ``surpassing relation'' and a ``negation map.'' Such a phenomenon is found in classical algebra by taking $\mcA$ to be an associative algebra graded over a monoid, and $\tTz$ to be the submonoid of homogeneous elements.
In particular one could embed a monoid into a monoid semialgebra, as in \cite{Lor1}.
%\todo[inline]{MA: sometimes it is $\tTz$ sometimes $\tT$, need to be clarified, also say somewhere in introduction that the elements are called tangible elements}
        % Negation maps, triples, surpassing relations, and systems were defined in \cite{Row21} and are reviewed in \S\ref{Prel} in this context.

%\todo[inline]{MA: I have put directly the general definition of uniquely %quasi-negated since it was used before the definition} OK : this relation

The ``surpassing relation,''  a pre-order which generalizes equality, allows one to formulate the appropriate notion of ``equation'' in a system,
replacing equality by the balance relation. The surpassing relation has two types: ``$\circ$-type'', motivated by supertropical algebra and symmerized max-plus semiring, and
``hyper-type,'' motivated by  {\it hyperfields} and
 {\it hyperrings}.

Hyperrings provide a special sort of  system,
called \textbf{hypersystems}, in which the set $\tTz$   is precisely the set  of
elements of a hyperring, %$(\mathcal A, \tT, (-),\preceq)$
as shown in~\cite[Theorem~3.21]{Row21}, and $G$-\textbf{hypersystems} when $\tT= \tTz\setminus \{\zero\}$ is a group.    Doubly distributive hypersystems give rise to semiring systems, see \Cref{hypersys}.
The structure of a semiring system  is well suited
 to algebraic constructions, like passing from
 scalars to matrices or polynomials. In contrast,
hyperrings are less amenable:  even over a doubly distributive
hyperfield, the multiplication
of polynomials and matrices is not univalent,
because it involves the multi-valued addition in the original hyperfield. Hence,
polynomials and matrices over hyperfields do not constitute hyperrings. Systems can be studied via two main classes of morphisms: ``Weak morphisms'' (\Cref{weam}) and $\preceq$-morphisms (\Cref{mo}).

In this paper, our
first main result, \Cref{func1}, shows how ``doubling'' %shows   categorically how to ``create''
create a formal ``negation map'', which is obtained as for the construction of negative integers.
%\todo[inline]{MA: I changed second kind in formal, since second kind is not defined, otherwise say ``formal negation map, also called of second kind \cite{Row21}''}

Next, in \S\ref{hypersy}, we compare the category of systems with the
category of hyperrings.
Using \cite[Theorem~3.21]{Row21}, recalled above,
we embed the category of hyperrings into the category of systems,
which provides the subcategory of hypersystems.

 Conversely, one may ask which semiring systems can be realized as hypersystems.
 In particular, given a system $\mcA$ associated to a monoid $\tTz$, one can %consider the set of tangible elements together with the zero element and
 ask when $\tTz$ constitutes a hyperring.
We observe that the addition in systems generally differs from the one in hyperrings.
  Nonetheless, we identify
 a condition under which a system corresponds to a hyperring,
%% from which  one often can obtain
%% hyperrings from the systemic approach,
cf.~\Cref{hyprec}. This condition involves
elimination axioms. It is stated in terms of the {\em balance} relation,
defined from the ``surpassing relation'' and ``negation map''. %substituting balance for equality.
  %% (because hyperrings have a multivalued addition but a single
 %% valued multiplication).
 %\todo[inline]{SG: last sentence changed in meeting today}
This result covers many key examples of structures
 used in tropical geometry, involving tropical
 numbers equipped with a multiplicity or a sign information,
which can be represented both as systems and as hyperfields.
%% \todo[inline]{SG: end addition}
%%  \todo[inline]{LR Fine, changed spelling. But I think you could define
%%  polynomial hyperring or matrix hyperring
%%  over a power set and thus over a hyperring, by taking all the elements}
% SG : agreed, but this yields an extra layer of complexity,
% not a hyperring stricto sensux
%Our main objective here is to describe the close connection between
%systems and hyperfields.
 %yprec} shows how to recover a hyperfield
%from a system.
%As with hypergroups,

In \Cref{Other}, we provide
 functors to categories of fuzzy rings, and blueprints.
 Fuzzy rings  also have two classes of morphisms, which correspond to the corresponding classes of morphisms of systems (of hyper-type).
 The more recent notion of tracts \cite{BB2} also is
encompassed in systems, see \Cref{trac1}.

We also are interested in the diversity of applications, and with related algebraic structures. In
\Cref{Induced}, we provide additional examples and
algebraic constructions that
 arise in the theory of systems.
 %SG: repetition of "applications" deleted, and may be useful in various applications.
 In \Cref{layered} we introduce  a
graded construction generalizing the ``layered algebras'' of~\cite{IKR0}
and the ``semidirect products'' of \cite{AGG2}, and which leads to different
complexifications of the tropical numbers.

In \Cref{sec-mat-sys}, we introduce matroids over systems, which generalize valuated matroids over fuzzy rings, allowing ``singular'', that is, non-invertible, values (or equivalently multivalues in hyperrings). This allows one in particular to increase the set of representable matroids, and, as we shall see, to encode degenerate configurations of tropical vectors.

 In   the appendix we  show more generally how to
obtain systems from abstract $\tT$-modules.

Another application  of systems, developed in \cite{GaR}, is to a version of the Grassman semialgebra and a sweeping version of the Cayley-Hamilton Theorem; this has been extended more recently to Clifford semialgebras in \cite{CGR}. Also projective modules in systems were investigated in \cite{JMR,JMR1}.

In another direction, roots and factorization of univariate polynomials
over systems have been studied in
\cite{AGT}, using in particular a notion of multiplicities introduced by Baker and Lorscheid for hyperfields. These questions has also been investigated by Gunn using the framework of hyperfields and idylls \cite{gunn,gunn2}.

%\todo[inline]{the fact that the 3 ranks coincide was proved in a different settin by develin, santos sturmfels}
 %The direction that if the rows are dependent then the matrix is singular, was handled in \cite{AGG2} and \cite[Theorem~8.11]{Row21}, but the converse turned out to be false in an example dating back to \cite{Ga}.
We are continuing this study in a paper  \cite{AGR1} that deals especially with linear algebra over
systems  in a more general setting.

\subsection{Our main concepts}\label{Prel1}$ $

Before stating our main results, we review some concepts.
We deal with   special kinds of structures,   defined as follows.
An \textbf{nd-semiring} $(\mathcal A,  +, \cdot, \zero,  \one )$  satisfies all the properties of a
ring, except for negation and distributivity. More precisely, $\mcA$ is endowed with an addition $+$ and a multiplication $\cdot$, also denoted by concatenation, such that $(\mcA, +,\zero)$ and $(\mcA, \cdot,\one)$ are monoids,
and
 $\zero$ is a multiplicatively absorbing element.
$\mathcal A^\times$ denotes the set of invertible elements of
$\mathcal A.$
A \textbf{semiring} \cite{golan92}  is an nd-semiring for which multiplication distributes over addition.
A \textbf{semifield} is a semiring $\mcA$ whose non-zero elements have multiplicative inverses, i.e., $\mcA = \mcA^\times \cup \{\zero\}$.

\begin{definition}\label{distt}\
 \begin{enumerate}  \item A \distributed  left module additively generated by a multiplicative monoid, written \textbf{left mgen module} for short,  is  an additive monoid $(\mcA,+,\zero)$ together with a subset $\tT\subset \mcA$,
  satisfying the following properties:

\begin{enumerate}

    \item  $\tTz: = \tT\cup \{\zero\}$ is a multiplicative monoid with unit $\one\in \tT$, and which additively generates $(\mcA,+,\zero)$.

       \item There  is a multiplication $\tTz \times \mcA \to \mcA,$ satisfying
      $(a_1 a_2)b = a_1(a_2b)$, $\zero b=\zero$,
     $\one b =b$, $a \one =a$ for all $a,a_i\in \tTz$ and $b\in \mcA$.

        \item Multiplication distributes over addition of $\mcA$, i.e., $a\sum a_i = \sum a a_i$ and $a\zero=\zero $
       % $(\sum_i a_i)a = \sum_i a_i a$
        for all $a,a_i\in \tTz.$ (This  also holds for $a_i\in \mcA$, by (b).)
        \end{enumerate}

 The elements
of $\tT$ are called {\em tangible} and when $\tT$ is fixed, $\mcA$ is also called a \textbf{left $\tT$-gen module}.
        Right mgen and $\tT$-gen modules are defined analogously.
A \textbf{$\tT$-gen bimodule} is a left and right $\tT$-gen module and is also called an
mgen bimodule.

         \item A subset $\mcA'$ of $\mcA$ is a \textbf{left $\tT$-submodule} if it is an additive submonoid and  $ab \in \mcA'$ for all $a\in\tT$ and $b\in \mcA'$.
        Right $\tT$-submodules or $\tT$-sub-bimodules are defined analogously.
%    When $\tT$ is understood, we write   \textbf{mgen bimodule} for a $\tT$-gen bimodule.
    \item        A left $\tT$-gen module $\mcA$ is  {\it left cancellative} if $ab_1 = ab_2$ implies $b_1=b_2$ for $a\in \tT,$
$b_i\in \mcA.$ Likewise for {\it right cancellative right $\tT$-gen modules}. A $\tT$-gen bimodule is \textbf{cancellative} if it is   both   left and right cancellative.

\item A \textbf{\distributed $\tT$-gen nd-semiring}, also called \textbf{mgen nd-semiring} when $\tT$ is understood, is a \distributed   $\tT$-gen  bimodule which is also a multiplicative monoid   $(\mcA,\cdot,\one),$
         for which the $\tTz$ actions coincide with the multiplication of $\mcA$ restricted to the sets
         $\tTz\times \mcA$ and $\mcA\times \tTz$.
% is a multiplicative submonoid of $\mcA.$

   \item  An \textbf{mgen semiring} is an \distributed mgen nd-semiring which is  \textbf{doubly distributive}, meaning
$$(a+b)(c+d)= ac+ad+bd+bd\enspace \forall a,b,c,d\in \mcA.$$
\end{enumerate}
\end{definition}

Although most of our semirings are commutative, we do not require commutativity, in order to allow examples like the semiring $M_n(\mathcal A)$ of $n\times n$
  matrices with entries in $\mathcal A$.

 \subsubsection{Negation maps, triples, and the balance relation} \label{Prel}$ $

\begin{definition}\label{negmap}$ $
%\begin{enumerate} \item
    A \textbf{negation map} on a \distributed left $\tT$-gen module $\mcA$ is a
 semigroup isomorphism
$(-) :\mathcal A \to \mathcal A$ written
$a\mapsto (-)a$, that is of order~$\le 2,$  satisfying the following properties:
   \begin{itemize}
          \item      $(-)\one \in \tT.$
          \item  $((-)a)b = (-)(ab) = a ((-)b)$ for $a \in \tT$, and $b \in \mathcal A.$
         \end{itemize}
%\todo[inline]{MA: I added the following, since the action on zero was not clear}
%We also require that the map $(-)$ on $\tT$ preserves the absorbing element $\zero$
%if it exists. %, and is extended in a map preserving it on $\tTz$ otherwise.
%\end{enumerate}
 We  then write $a(-)b$ for $a + ((-)b),$
and $a = (\pm ) b$ when $ a= b$ or $a = (-)b.$

A negation map of a right $\tT$-gen module,
resp.~a $\tT$-gen bimodule, is defined analogously.
\end{definition}

A negation map is called a ``symmetry'' in \cite[Definition~2.3]{AGG2}.
We cope with the lack of negation by introducing the above weaker negation map.
The identity map is an example of a negation map, and often is used in tropical mathematics. However,
  a subtler use of negation map in semiring theory is provided by symmetrization, to be discussed in \Cref{semidir38,semidir39}.

The negation map $(-)$ satisfies the usual rules of a minus sign, except that
%writing $b_1(-)b_2$    for $b_1+(-)b_2$ for $b_i\in \mcA$,
      we need not have $b(-)b = \zero.$

$b^\circ := b (-)b$ will be called a {\em quasi-zero} for all $b\in \mcA$. (a quasi-zero is
called a \textbf{balanced element} in \cite[Definition~2.6]{AGG2}.)

An important
$\tT$-submodule of $\mathcal A$ is:
$$\mathcal A^\circ = \{ b^\circ: b \in \mathcal A\},$$
which takes the place of $\zero$ in much of the theory.

%\todo[inline]{SG: I moved the previous sentence (which was erroneously put in the previous section were circle was undefined) here.}
%\todo[inline]{MA: I further changed the sentences above move some below after
%triples,
%since we started with a semigroup and not a module and then $\tT^\circ $
%cannot be defined without embedding of $\tT$ in $\mathcal A$.}
A \textbf{quasi-negative} of $b \in
\mathcal A$ is an element $b'$ such that $b+b' \in \mathcal A^\circ.$
\begin{propt}\label{negm}$ $
\begin{enumerate}
    \item $(-)b = (-)(\one b) = ((-)\one)b$ for all $b\in \mcA.$
     \item $(-)\zero = ((-)\one)\zero =\zero.$
     \item $((-)\one)^2 = (-)((-)\one) = \one.$
\item $(-)\tT = ((-)\one) \tT = \tT.$
\item Any (left or right) $\tT$-submodule $\mathcal B$ of $\mathcal A$ is stable under
the negation map (for all $b\in \mathcal B$, since
$(-) b= (-) (\one b)= ((-) \one) b\in \mathcal B$.

\item $(-)b^\circ = (-)b + ((-)(-)b) = b^\circ .$
\item Any multiplicative monoid homomorphism $f:\tT\to \tT'$ satisfies $f((-)a) =f((-)\one))f(a) = (-)f(a),$ and $f((-)\one))^2 = \one'.$\end{enumerate}
\end{propt}

\begin{definition}$ $\label{def-tripbal}
    \begin{enumerate}
 \item  A \textbf{triple}  $(\mathcal A, \tT, (-))$ is a \distributed  $\tT$-gen bimodule
$\mathcal A$ together with a negation map $(-)$
   for which
        $\mathcal A^\circ \cap \tT= \emptyset.$

      \item A $G$-\textbf{triple} is a
 triple  $(\mathcal A, \tT, (-))$ for which $\tT= \mcA^\times.$

 \item Given a  triple  $(\mathcal A, \tT, (-))$ and a subset $\mathcal I \supseteq \mathcal A^{\circ}$ of $\mathcal A$,
%\todo[inline]{LR: fine MA: I eliminated ``closed under $(-)$'' since $\tT$ beeing a  monoid, and in view of properties of the negation map, we have   $(-)\one\in \tT$ and so  $(-)a =\one ((-)a)= ((-)\one) a\in \mathcal I$ for all $a\in {\mathcal I}$, as soon as $\mathcal I$ is a  $\tT$-gen sub-bimodule of $\mathcal A$.}
  we say that $b_1$ \textbf{balances} $b_2$ or that the pair $(b_1,b_2)$ is \textbf{balanced} (relatively to $\mathcal I$),
  and we write
  $b_1\nabla_{\mathcal I}\, b_2$, if $b_1(-)b_2
  \in \mathcal I.$
  (This is completely different from the use of
$\nabla$ in \cite{IzhakianRowen2008Matrices2}.)

 \item %Given a  triple  $(\mathcal A, \tT, (-))$ and a $\tT$-gen sub-bimodule $\mathcal I \supseteq \mathcal A^{\circ}$ of $\mathcal A$, w
 We
say that
a subset $S \subset {\mathcal A}$  is \textbf{uniquely quasi-negated over $\mathcal
I$} if $a\nabla_{\mathcal I} a'$ for $a,a' \in S$ implies $a= a'$.
%(We usually take $S=\tT $  or $S=\tTz $.)
%% A subset $S \subset {\mathcal A}$  is \textbf{uniquely quasi-negated} over ${\mathcal A}^\circ$ if $a(-) a' \in {\mathcal A}^\circ$ for $a,a' \in S$ implies $a= a'$.
 \item  The triple  $(\mathcal A, \tT, (-))$
 is \textbf{uniquely negated} if $\tT$
 is uniquely quasi-negated over ${\mathcal A}^\circ$.
    \end{enumerate}
\end{definition}

The uniquely negated property is  satisfied by  all of our examples
below (in particular \Cref{supert,semidir38,semidir39}, see also
\Cref{hypersy}) but not, ironically, by the original max-plus
semiring $\mathcal{A}=\R\cup\{-\infty\}$, with negation taken as the
identity map, in which $ \mathcal A = \mathcal A^\circ.$

\begin{rem} If $\mathcal I$ is a submodule of $\mcA$,
the relation $\nabla_{\mathcal I}$ is  stable under the negation map, and
is reflexive, symmetric, and satisfies, for elements $a_i\in \tT$, $b, b_i,b_i' \in \mathcal A$:
\begin{enumerate}
    \item
 $b \nabla_{\mathcal I}\, \zero \Leftrightarrow b\in {\mathcal I}$.
    \item $ b^\circ\nabla_{\mathcal I}\, \zero$.
\item If $b_i \nabla_{\mathcal I} b'_i$  for $i=1,2$, then $a_1b_1+a_2b_2 \,\nabla_{\mathcal I}\, a_1b_1'+ a_2b_2'$ and $b_1a_1+b_2a_2 \,\nabla_{\mathcal I}\, b_1'a_1+ b_2'a_2$ for $a_i\in \tT$.
 \end{enumerate}
\end{rem}

\subsubsection{Surpassing relations and systems}$ $

Many results weaken equality to the surpassing relation.
 Let us
recall its definition, also
cf.~\cite[Definition~2.28]{Row21}.
\begin{definition}\label{precedeq07}$ $
\begin{enumerate}
\item
By \textbf{pre-order} we mean a reflexive and transitive relation. A
\textbf{partial order} (PO) is an antisymmetric  pre-order; an
\textbf{order} is a total PO.

\item \label{precedeq07-1} A  \textbf{$\tT$-pre-order}  $(\preceq)$  on  a $\tT$-gen bimodule  $(\mathcal A, \tT)$
 is a   pre-order  satisfying the following, for elements $a_i\in \tT$, $b, b_i,b_i' \in \mathcal A$:
  \begin{enumerate}
 \item\label{surp-0} If $b\preceq \zero$ then $b=\zero.$
    \item\label{surp-1}  $\zero \preceq b^\circ$.
\item\label{surp-2}  If $b_i \preceq b'_i$  for $i=1,2$, then $a_1b_1+a_2b_2 \preceq a_1b_1'+ a_2b_2'$ and $b_1a_1+b_2a_2 \preceq b_1'a_1+ b_2'a_2$.
 \end{enumerate}

\item When $(\mathcal A, \tT)$  is a \distributed $\tT$-gen nd-semiring, we   also require
\begin{enumerate}\setcounter{enumii}{3}
\item\label{surp-22}  If $b_i \preceq b'_i$  for $i=1,2$ then $b_1b_2 \preceq b_1'b_2'$.
 \end{enumerate}
\item \label{precedeq07-2} A \textbf{surpassing relation} on a $\tT$-gen bimodule  $\mathcal A$,
denoted
  $\preceq $,  is a $\tT$-pre-order satisfying the condition
  \begin{enumerate}\setcounter{enumii}{4}
  \item   \label{surp-5} If  $a\preceq a' $ for $a,a' \in \tTz$, then $a =  a'.$
 \end{enumerate}

  \item  A \textbf{strong surpassing relation} on a triple $\mathcal A$
is a surpassing relation   satisfying the following stronger
version:
  \begin{enumerate}\setcounter{enumii}{5}
\item If $b  \preceq a $ for $a \in \tT$ and $b \in \mathcal A$,
then $b=a$. \end{enumerate}

\item  Define $
{\mathcal A}_{\Null}: = \{ b \in \mathcal A : b \succeq
\zero\},$ and write $ \nabla $ for $
\nabla_{\mathcal A_{\Null}}$.

\item A \textbf{system} $(\mathcal A, \tT, (-),\preceq)$ is a
  triple $(\mathcal A, \tT, (-))$ with a surpassing relation $\preceq,$
such that $\tT$ is uniquely quasi-negated over  the $\tT$-sub-bimodule  ${\mathcal A}_{\Null}.$

\item A \textbf{nd-semiring system}  is a  system $(\mathcal A, \tT, (-),\preceq)$  for which
  $\mathcal A$  is a $\tT$-gen nd-semiring.

  \item A \textbf{$G$-system}  is an nd-semiring  system $(\mathcal A, \tT, (-),\preceq)$  for which
 $\tT = \mcA^\times$.

\item A \textbf{semiring system}  is a nd-semiring system $(\mathcal A, \tT, (-),\preceq)$ for which
  $\mcA$ is an mgen semiring.
\end{enumerate}
\end{definition}

\begin{rem}$ $\begin{enumerate}
    \item
    In any nd-semiring system,
${\mathcal A}_{\Null}$ is an ideal (i.e., absorbing multiplication).

    \item
For any  surpassing relation on a triple, if $b \preceq b'$ then $(-)b = ((-)\one )b\preceq ((-)\one) b' =(-)b'$.

    \item
We have $b_1 \preceq b_2 \Rightarrow b_1 \nabla b_2,$
 since $\zero \preceq b_1 (-)b_1 \preceq b_2 (-) b_1.$

 \item The   unique negation in systems is slightly stronger than unique negation with respect to ${\mathcal A}^\circ$, since
 ${\mathcal A}_{\Null}$  may not coincide with $\mathcal A^\circ$.
\end{enumerate}\end{rem}

We are interested especially in three types of systems.

%\todo[inline]{MA: We forgot to ask for the hyperaddition to have only nonempty sets values (this is necessary for a binary multivalued binary operation, since otherwise the emptyset is absorbing). So I have defined a notation and a latex macro for the set of nonempty subsets. and I recall the definition in section 3.}
\begin{definition}\label{precex}$ $

 \begin{enumerate}  \item \label{precex-i}(Compare with \cite[Definition~2.8]{AGG2})
The \textbf{$\circ$-surpassing relation} on  a uniquely negated triple  $(\mathcal A, \tT, (-))$ is given by
 $b \preceq_\circ b'$
  whenever $b + c = b'$ for some $c\in \mathcal A^\circ$.
  \item A surpassing relation is of   $\circ$-\textbf{type} if  $  {\mathcal A}_{\Null}= \mathcal A ^\circ.$

\item
A system $(\mathcal A, \tT, (-), \preceq)$ is of \textbf{weak hyper-type} if
for every $b_i\in \mcA$ with $\sum _{i=1}^n b_i \in {\mathcal A}_{\Null}$ there are $a_i\in \tTz$ such that $a_i\preceq b_i$ and $\sum _{i=1}^n a_i  \in {\mathcal A}_{\Null}$.

\item
An nd-semiring system $(\mathcal A, \tT, (-), \preceq)$ is of \textbf{hyper-type} if   for every $b_i$, $b'_i\in \mcA$ with $\sum _i b_i b'_i \in {\mathcal A}_{\Null}$ there are $a_i, a_i'\in \tTz$ such that $a_i\preceq b_i$, $a'_i\preceq b'_i$ and $\sum _i a_i a'_i \in {\mathcal A}_{\Null}$.
 \end{enumerate}\end{definition}

 \begin{rem}$ $
     \begin{enumerate}
         \item The system
 $(\mathcal A, \tT, (-), \preceq_\circ )$ is  of   $\circ$-{type}.

\item In \Cref{hypersy} we shall see that systems of hyperrings are of hyper-type. Taking $b_i' =\one$ in (iv), we see that   hyper-type implies weak hyper-type; taking $b_i' =\zero,$  we see that for every $b\in \mcA$ there is  $a\in \tT$ such that $a\preceq b$.
     \end{enumerate}
 \end{rem}

 \begin{note}
    The applications in the literature often are $G$-systems, although the theory does not require the elements of $\tT$ to be invertible.
 \end{note}

\subsection{Summary of our main results}$ $

Our results are formulated in terms of the functors between the categories defined below in \Cref{cat1}, especially reflective subcategories and dominant functors.

\begin{enumerate}
    \item
  \Cref{func1}. (generalized in \Cref{dou}) The  {\it doubling functor}  sends the category of \distributed mgen bimodules to  the category of
triples; hence  the category of
triples is a reflective subcategory of the category of \distributed mgen bimodules. (Likewise for hyper-type.)

\item  \Cref{hypersys}. There is a faithful functor in which every     hyperring (resp.~hyperfield) gives rise to a corresponding hypersystem (resp.~$G$-hypersystem).

\item  \Cref{hyprec}.  The category of hyperrings  is a reflective subcategory of the category of tangibly balanced
 systems $(\mathcal A,  \tT, (-),\preceq)$ satisfying tangible balance elimination, and $\nabla$-inversion.
%of hyper-type.

 \item  \Cref{sysassoc}. The analogy of \Cref{hyprec}, using  $\preceq$ instead of $\nabla.$

\item  \Cref{fuzz07},\Cref{fuzz071}. (Also see  \Cref{fuzz072}).
There is a  functor (resp.~reflective functor) from the category of $G$-systems of hyper-type, with $\preceq$-morphisms (resp.~weak $\preceq$-morphisms), to the category of fuzzy rings with fuzzy morphisms (resp.~weak $\preceq$-morphisms).

    \item  \Cref{trac1}. Any tract with $N_G$   a $G$-module gives rise to a
  triple of the form $( \mathcal A = \mathbb N[G] , G, (-))$
with a group $\tT = G$, satisfying  unique negation, and to a system
$( \mathcal A = \mathbb N[G] , G, (-),\preceq_{\mathcal I})$ where
$\mathcal I = N_G$. This gives a faithful functor from such tracts to semiring systems.

\item   \Cref{Lap700}  and \Cref{Lap7} show that
matroids over systems extend valuated matroids and matroids over hyperfields. In particular they allow to represent "tropically degenerate" configurations.
\end{enumerate}

 \section{Relevant categories for this paper}\label{cat1}$ $

 Let us view these notions categorically, following
\cite[Definition~4.1]{Row21} and \cite[Section~7]{JuRo}.

\subsection{Categorical reformulation of \Cref{distt}} $ $

\begin{definition}[See for instance \cite{maclane} and \cite{bruguiere-dominant}]$ $ \begin{enumerate}
    \item A  subcategory $\mfraka{C}$ of a category $\mfraka D$ is said to be \textbf{reflective}\footnote{Usually in the literature reflective categories are required also to be full, but we have the key instance
    in doubling, given in \Cref{func1}.} in $\mfraka D$ if for each $\mfraka D$-object~$D$ there exists an $\mfraka C$-object $C_D$ and a $\mfraka D$-morphism ${\displaystyle r_{D}\colon  D\to  C _{D}}$ such that for each $ \mfraka D$-morphism ${\displaystyle f\colon D\to  C}$   to a $\mfraka  C$-object   $C$, there exists a unique $\mfraka  C$-morphism ${\displaystyle {\overline {f}}\colon   C_{D}\to C}$ with ${\displaystyle {\overline {f}}\circ r_{D}=f}.$

    \item An object $  C$ in a category is called a {\em retract} of an object $ D $ if there are morphisms $i: C\to  D$ and $r: D\to  C $ such that $r\circ i=1_{ C}.$

\item A \textbf{dominant functor} is a functor $\Psi : \mfraka C \to \mfraka D$ in which every object of $\mfraka D$ is a retract of an object of the form $\Psi(X)$ for some object $X$ of $\mfraka C$.
 \end{enumerate}
 \end{definition}

 \begin{definition}\label{def7}$ $
 Since in all categories in this paper, the objects under consideration include an underlying set $\tT$ such that  $\tTz$ is a monoid, which varies, we call them \textbf{M-categories}. (In \cite{AGR1} we consider \textbf{ $\tT$-categories}, in which $\tT$ is fixed.)
 \begin{enumerate}

    \item
       Our main  category $\MMod$, expressed in terms of universal algebra, has as a typical object
an mgen bimodule, that is a couple  $(\tT,\mcA)$
%where $\mcA$ is a  \distributed  $\tT$-gen module
satisfying the relations of
\Cref{distt}(i) and (ii).

  \item
 A  \textbf{bimodule multiplicative map} is a map $f$ sending a  \distributed  $\tT$-gen bimodule $\mcA$ to  a  \distributed   $\tT'$-gen bimodule $\mcA'$,  such that  $f(\tTz)\subseteq \tTz'$, $f(\zero) = \zero',$ $f(\one) = \one',$ and
$$ f (ab) = f (a)f (b),  \qquad f (ba) = f (b)f (a), \quad\text{for all}\; a\in \tT,\; b\in \mcA\enspace .$$

 \item    A \textbf{bimodule homomorphism}  from a  \distributed  mgen bimodule $\mcA$ to  a  \distributed   mgen bimodule $\mcA'$ is a bimodule multiplicative  map  $f$ also preserving addition
 from $ (\mcA,+,\zero) $ to $(\mcA',+,\zero') $.
 As usual, an {\it isomorphism} is a homomorphism which is 1:1 and onto, so its inverse map is also a  homomorphism.

Our morphisms in  $\MMod$ are the bimodule homomorphisms.

    \item
The  category $\MB$  is   the  subcategory in which $\mcA$ now is an \distributed mgen nd-semiring. Here the
  homomorphisms also must preserve nd-semiring multiplication, i.e., $f(b_1b_2)=f(b_1)f(b_2)$ for $b_1,b_2\in \mcA.$

    \item
The  category  $\MSr$  is   the full subcategory of $\MB$,  in which $\mcA$ is an \distributed mgen semiring.

    \item
The  category $\MModT$ is   the   subcategory of $\MMod$  whose objects are triples,  and whose morphisms also satisfy $f((-)\one)= (-)\one' $.

    \item
The  category $\MBT$ is  the  subcategory of $\MB$ whose objects are nd-semiring  triples, and whose morphisms also satisfy $f((-)\one)= (-)\one' $.

\item
The  category $\MSrT$ ($\mathfrak{SDM}$ of \cite[Section~7]{JuRo})  is   the  full subcategory of $\MBT$,  whose objects are semiring  triples.
 \end{enumerate}
 \end{definition}

\begin{rem}$ $\begin{enumerate}
    \item
    The condition $f((-)\one)= (-)\one' $ is automatic for additively  cancellative mgen bimodules, but otherwise this stipulation is an obstruction to the subcategory $\MModT$
    (resp.\ $\MBT$, $\MSrT$) of $\MMod$ (resp.\ $\MB$, $\MSr$) being full.
    \item The negation map was viewed as an endofunctor in \cite{JuRo}. Although this is fine for  $\MModT$, it does not apply to $\MBT$, since for a multiplicative morphism $f$, $(-)f(b_1b_2) \ne ((-)f(b_1))(-)f(b_2)$ for $(-)$ of the second kind.
\end{enumerate}
  \end{rem}

\subsection{Weak morphisms }$ $

Following \cite{DrW}, \cite{GJL} also brings in the notion of {\it weak morphism}.

\begin{definition}\label{weam}
% A $\tT$-\textbf{multiplicative map} is a map $f$ sending a  \distributed  $\tT$-gen module $\mcA$ to  a  \distributed   $\tT'$-gen module $\mcA'$,    $f(\tT)\subseteq \tT'$, $f(\one) = \one',$ and
   %  $$ f (a_1a_2) = f (a_1)f (a_2)  \quad\text{for all}\; a_i\in \tT.$$

  A \textbf{weak morphism} $f$ of  systems from $(\mathcal A, \tT, (-),\preceq)$ to $(\mathcal A', \tT', (-)',\preceq')$
   is a  multiplicative map $f: \tT\to \tT'$ satisfying merely the property that $\sum a_i \in {\mathcal A}_{\Null}$ for $a_i\in \tT$ implies
  $\sum f (a_i) \in {\mathcal A'}_{\Null}.$
  \end{definition}

\begin{note} Although this paper deals with systems,
    we could use any ideal $\mathcal{I}$ instead of ${\mathcal A}_{\Null},$ in which case we do not strictly need the surpassing  relation.
    This avenue is pursued in \cite{AGR1}.
\end{note}

\begin{definition}
  $ $\begin{enumerate}
  \item   $\WMModT$ is the category with the same objects as  $\MModT$  but whose morphisms are {\it weak morphisms}.

\item Likewise  for $\WMBT$.
\end{enumerate}
\end{definition}

\subsection{$\preceq$- morphisms }$ $

Homomorphisms often are too strong for a general systemic theory.
Once we introduce the surpassing relation, we may enter it into the definition of morphism:

\begin{definition}\label{mo}
A $\preceq$-\textbf{morphism} $f: (\mcA,\tT,(-),\preceq) \to (\mcA',\tT',(-),\preceq')$  of systems is a map   $f :\mcA \to \mcA'$ which restricts to a  monoid homomorphism   $ (\tT,\cdot,\one) \to (\tT',\cdot,\one')$, for which
 \begin{enumerate}
 \item $f(\zero) =  \zero' ,$
%\item $f((-)(\one) = (-)'\one' $,
\item $f (ab) = f (a)f (b)  $ and $f (ba) = f (b)f (a)  $ for all $a\in \tT,$ $b\in \mcA,$
\item $b_1\preceq b_2$ implies
 $f(b_1)\preceq' f( b_2)$, for all $b_1,b_2 \in
\mathcal A,$
\item
$f (b_1 + b_2)\preceq f(b_1)+f(b_2)$ for all $b_i\in \mcA.$
 \end{enumerate}

\end{definition}

The category $(\MSrT;\preceq)$ of semiring systems and its functors to other categories was studied in depth in \cite{Row21,JuRo}, but we are interested here in more general situations, to show how  $(\MModT,\preceq)$ encompasses tropical systems, hypersystems, doubling, and fuzzy systems. The more general approach in~the appendix gives rise to further interesting connections, to be discussed there.

Let us see why the annoying condition $f((-)\one)= (-)\one' $ of \Cref{def7} can be bypassed in \Cref{mo}, yielding all the properties of \cite[Definition~4.1]{Row21}.

\begin{lem}\label{ful1}
   Any $\preceq$-morphism $f$ of  systems satisfies the following:
   \begin{enumerate}
   \item $f(\mcA_{\Null}) \subseteq (\mcA')_{\Null};$
\item $f((-)\one) =(-)\one' ;$
       \item $f((-)a) =(-)f(a),$  for all $a\in \mcA$.
 %      \item $f(\mcA^\circ) \subseteq (\mcA')^\circ.$
      \end{enumerate}
\end{lem}
\begin{proof}
(i) $\zero \preceq b$ implies $\zero' =f(\zero)\preceq' f(b).$

(ii) Since $\one +(-)\one \in \mcA_{\Null}$, we get that
$\zero'\preceq' f(\one +(-)\one)\preceq' f(\one) +f((-)\one) $.
This implies $f((-)\one) = (-)f(\one)=(-)\one'$ since any  system is uniquely negated.

(iii) $f((-)a) = f((-)\one)a) = f((-)\one)f(a) = ((-)\one') f(a) = (-)f(a).$

\end{proof}

\begin{example}
Tropicalization is a $\preceq$-morphism, as explained in \cite[Proposition~9.2]{Row21}
Another example, closely related to this paper, is \cite[Theorem~7.4]{Row21}.
\end{example}

$(\mfrak{DModT},\preceq)$, $(\MBT,\preceq)$, and
$(\MSrT,\preceq)$  are the appropriate categories of systems,
whose morphisms now are the $\preceq$-morphisms.

 The  category $(\MModT,\preceq_{\operatorname{hyp}})$ is the full
 subcategory of $(\MModT,\preceq)$ whose objects $(\mathcal A, \tT, (-),\subseteq)$ are systems  of weak hyper-type. This has the subcategory $(\MBT,\preceq_{\operatorname{hyp}})$ of nd-semiring systems of hyper-type, which in turn has the  subcategory $(\MSrT,\preceq_{\operatorname{hyp}})$
of semiring systems of hyper-type.

The  category $(\MModT,\preceq_{\operatorname{Null}})$ is the full
 subcategory of $(\MModT,\preceq)$ whose objects  $(\mathcal A, \tT, (-),\subseteq)$ have surpassing relations of $\circ$-type.

 In both cases the morphisms are $\preceq$-morphisms, although in the latter case one could enlarge this to the set of weak morphisms.

\section{Preliminary results}\label{Prel12}

 $\Net$ denotes the
semiring of natural numbers, including~$0$, and $\Net_{>0}:=\Net\setminus\{0\}$.
The monoid $(\Net_{>0},\times )$ acts on any semigroup $(\mathcal A,+) $ via
$mb := b+\cdots + b$, where the sum is taken $m$ times.

\begin{proposition}\label{dc} Suppose that
$\mathcal A$ is a $\tT$-gen bimodule. % with unit element $\one_{\mathcal A}$.
We can define a  doubly distributive multiplication on $\mathcal A$,
extending the given multiplications $\tT \times \mcA\to \mcA$ and $\mcA\times \tT\to \mcA$,
 defined  by  \begin{equation}\label{eq2.17}
\left(\sum_i
 a_i \right)\left(\sum_j a_j'\right) = \sum_{i,j}
 (a_ia_j'),\end{equation}
 with respect to which $\mcA$ becomes a $\tT$-gen semiring. % and $(\mathcal A, \tT, (-))$   is a semiring triple.

 If $\mathcal A$ already is an mgen semiring,
% with a multiplication consistent with the
%action of $\tT$ and with unit element $\one_{\mathcal A}$,
then the multiplication in $\mathcal A$ coincides with \eqref{eq2.17}.
\end{proposition}
\begin{proof} First notice that the formula is well-defined, since if $\sum_j a'_j  =
\sum_k a''_k $ then $$\sum_{i,j} (a_i a'_j)   =
\sum_{i,j} a_i (a'_j )
=\sum_i a_i
\left(\sum_j  a'_j   \right)
=\sum_i a_i
\left(\sum_k  a''_k   \right)
%=  \sum_i (a_i (\sum_j  a'_j))
%
=
\sum_{i,k} (a_i a''_k) ,$$ and likewise if we change the $a_i$.
Now, (double) distributivity is clear.

 Next, $\one$ remains the unit element of $\mcA.$
  Associativity follows from \eqref{eq2.17} and
the associativity of multiplication in $\tT$.
%$((au)(a'u))(a''u) = ((aa')u)(a''u) = ((aa')a'')u  = (a(a'a''))u =
%(au)((a'u)(a''u)).$

To show the second part of the proposition, we assume that $\mathcal A$ is a semiring with a multiplication~$\times$
consistent with the action of $\tT$,  and unit $\one$.
Let $a,a'\in \tT$.
From the consistency of multiplication with the action, we have $(a \one)\times (a'\one)=a (\one\times (\one a'))=a ((\one\times \one) a')$.
 We have $\one\times \one=\one$, which leads to
$(a \one)\times (a'\one)= a (\one a')= a (a'\one)=(aa') \one$.
Then,  from the distributivity of $\times$ over addition, we
obtain that
$\left(\sum_i  a_i \one\right)\times \left(\sum_j a_j'\one\right) = \sum_{i,j}
(a_i \one)\times (a_j' \one)= \sum_{i,j} (a_i a'_j)\one$, which means that
the multiplication in $\mathcal A$ coincides with \eqref{eq2.17}.
\end{proof}

(This also follows from \cite[Theorem~2.45]{Row21}.) Nonetheless, we do not necessarily want to consider~$\mathcal A$ with this new semiring structure, as we shall see in the treatment of hyperrings.

%Then $\mathcal{A}^\circ $
%is spanned by $\tTz^\circ.$

\begin{table}

\begin{tikzpicture}[scale=0.85]
 \node (top) at (0,0) {\begin{tabular}{c}Triples $(\mathcal{A},\mathcal{T},(-))$\end{tabular}};
 \node (topa) at (0,1.5)
       {\begin{tabular}{c} $\mathcal{T}$-gen modules $\mathcal{A}$\end{tabular}} edge[thick,bend left=45,->] (top.east);
       \node at (4.5,0.75) (doubling) {doubling, Th.~\ref{func1}, (i)};
    \draw[->] (top) -- (topa);

    \node at (0*1.5,-1.5) (btop) {Systems $(\mathcal{A},\mathcal{T},(-),\preceq )$};%, with $\preceq$-%morphisms};

    \node[fill=white] at (5.5,-1.5) (fuzzyring) {Coherent Fuzzy rings};
    \draw[->] (fuzzyring) --(btop);
    \node[fill=white] at (3.5,-3) (hypertype) {Hyper-type};
   \node[fill=white] at (3.5,-4.5) (tbs) {$(\TBS,\preceq)$};
   \draw[->] (hypertype) -- (btop);
   \draw[<-] (top) -- (btop);
    \draw[->] (tbs) -- (hypertype);
   \node[fill=white] at (-4,-3) (metat) {Metatangible};
   \draw[->] (metat) -- (btop);
   \node[fill=white] at (-4,-4.5) (bipot) {$(-)$-Bipotent};
   \draw[->] (bipot) -- (metat);
   \node[fill=white] at (3.5,-6) (hsys) {Hypersystems} edge[thick,bend left=45,<-] (tbs.west);
 \node[fill=white] at (0,-7.5) (shsys) {Semiring hypersystems};
   \node at (1,-5) (labelth) {Th.~\ref{hyprec}};
   \draw[->] (hsys) -- (tbs);
   \node[fill=white] at (7,-6) (hrings) {Hyperrings};
   \draw[thick,<->] (hsys) -- (hrings);
    \node at (5.5,-5.5) (equiv) {Th.~\ref{hypersys}, \eqref{hypersys-i}};
   \node[fill=white] at (7,-7.5) (regular) {Doubly distributive hyperrings};
   \draw[->] (regular) -- (hrings);
    \draw[<->] (shsys) -- (regular);
\draw[->] (shsys) --(hsys);
\node at (3.5,-7) (labelthnew) {Th.~\ref{hypersys}, \eqref{hypersys-iii}};
   %% \node[fill=white] at (-2,-6) (ssyst) {Semiring systems};
   %% \draw[->] (ssyst) -- (btop);
\end{tikzpicture}

  \caption{Triples, systems and hypersystems.}
\end{table}

%On any semigroup $\mathcal A$ with a $\tT$-action, the
%identity map is a negation map.

\begin{definition}
 We define the \textbf{height} of an element $c$ in a
triple $(\mathcal A, \tT,(-))$ as the minimal $t$ such that $c =
\sum_{i=1}^t a_i$ with each $a_i \in \tT.$ (We say that $\zero$ has
height 0.) The \textbf{height} of $\mathcal A$ (if it exists) is the
maximal height of its elements.
\end{definition}
If $(\mathcal A, \tT,(-))$ is a triple, we can define
$\tT^\circ = \{ b^\circ: b \in \tT\}$, and
$\tTz^\circ= \tT^\circ\cup\{\zero\}$.
Then $\mathcal{A}^\circ\setminus{\zero} $
 is additively spanned by $\tT^\circ$, and
$\mathcal{A}^\circ $ is additively spanned by $\tTz^\circ$.

 A triple $(\mathcal A, \tT, (-))$ is called \textbf{shallow} if
 $ \mathcal A =  \tT ^\circ \cup \tTz$. In this case $\mcA$ has height 2.

When $\mathcal A$ is a semiring, we call $(\mathcal A, \tT, (-))$  a \textbf{semiring triple}.
Then $A^\circ$ is clearly an ideal.
% (We are already assuming that
%$\tTz$ additively generates $\mathcal A$, cf.~\cite{Row21,Row19}.)
%% Note that we require for $a,a'\in \tT$ that the left $\tT$-module
%% action $a \cdot_\ell a'$ has the same meaning as the  right
%% $\tT$-module action $a \cdot_r a'$.

Note that any additively indecomposable element of $\mathcal A$ must be tangible.

A  triple $(\mathcal A, \tT, (-))$ is $\tT$-\textbf{trivial} if $\tT=\{\one\}$,
%\todo[inline]{MA: it was written $|\tT| = 1,$ I supposed this was equivalent
%to $\tT=\{\one\}$ but I prefer the latter since this condition is similar to $\tT = \{(\pm) \one \}$.}
and  is $(\tT,(-))$-\textbf{trivial} if $\tT
= \{(\pm) \one \}.$  An interesting example is the ``characteristic
triple'' where $\tT = \{(\pm) \one \}$ and $\mathcal A = \tT \cup
\{\zero, \one^\circ\} = \tTz \cup \{ \one^\circ\}$, which  is shallow; here $\mathcal A^\circ = \{\zero, \one^\circ\}$.

%%
%%\todo{semiring triple defined again}
%\todo[inline]{Following to be moved:
%  A triple $(\mathcal A, \tT, (-))$ is a \textbf{semiring triple}
%(resp.~\textbf{triple}) if $\mathcal A$ is a semiring
%(resp.~\distributed), thereby tying in with \cite{Lor1,Lor2},
%although here the negation map plays a key role. When $\tT$ is also
%a group, we call a semiring triple $(\mathcal A, \tT, (-))$  a
%\textbf{semiring-group triple}.}

%\todo[inline]{SG: original maxplus algebra not defined. More globally, giving examples at this early stage would be useful.}

% It is convenient to consider $\mathcal
%A$ as a semiring containing $\tT\cup \zero \}$, which is justified
%in \cite[Theorem~1.17]{Row21}.

The   various negation maps are treated in a unified manner. The
negation map $(-)$ is said to be of the \textbf{first kind} if
$(-)\one = \one$ (and thus $(-)$ is the identity), and of the
\textbf{second kind} if $(-)a \ne a$ for all $a \in \tT$.

\begin{rem}
Any negation map on a cancellable \distributed mgen bimodule $\mcA$ is of first or second kind,
and  it is enough to check
whether or not $(-)\one = \one$ to determine the kind of the
negation map.
(Indeed, if $(-)a = a,$ then $a((-)\one)  = a\one, $ implying $(-)\one =\one.$)
\end{rem}

Negation
maps of the first kind are used in the supertropical system, see
\Cref{supert}, and negation maps of the second kind are
used in the symmetrized systems, see \Cref{semidir38,semidir39}.

\subsection{Metatangible and $(-)$-bipotent triples}$ $

Recall from \cite[Definition~2.25 and Lemma~6.1]{Row21} that a
\textbf{metatangible triple}  is a uniquely negated triple
 for which $a+b \in \tT$ for any $a \ne (-) b$
in~$\tT.$ Metatangible triples are the mainstay of \cite{Row21}.

We strengthen metatangibility a bit.

\begin{definition}\label{bipot}
 A triple $(\mathcal
A, \tT, (-))$ is \textbf{$(-)$-bipotent} if $a + a' \in \{a,a'\}$
whenever $a, a' \in \tT$ with  $a' \neq (-) a.$  \end{definition}
This is a relaxed version of the classical notion of bipotence of a semigroup $(S,+)$,
which requires that $a+a'\in \{a,a'\}$ for all $a,a'\in S$.

%{$(-)$-Bipotence    comes up frequently, although the exclusion $a'
%\neq (-) a$ is crucial.
 As
discussed in~\cite[\S 3.3.1]{Row21}, the triples used in tropicalization
(related to the max-plus algebra) are all $(-)$-bipotent, thereby
motivating us to develop the algebraic theory of such triples. Also see \Cref{semidir39}.

Examples of non-$(-)$-bipotent, metatangible triples are $(\mathbb
Z,+),$ taking $\tT = \mathbb Z \setminus \{ \zero\},$ or $(\mathbb
Z/n,+),$ but they  have a classical flavor.

 %\begin{notation}\label{circint3} By \cite[Theorems~6.25 and 6.28]{Row21}, in a
  %cancellative  metatangible triple, any element
  %$b$ with height $m\geq 1$ has a ``uniform presentation''
%  as $m b_\tT $, when $m\ne 2$, or
 % as $ b_\tT^\circ $, when $m=2$,
%  with $b_\tT\in\tT$.
 % The element $b_\tT$ is unique when
 % $m\neq 2$.
%% Also, if
%% %  \cite[Theorem~7.31]{Row21}
%%   $b_\tT$ is ``almost'' unique, in the
%% sense that it is unique (taking $m$ minimal) except for possible $b
%% = b_\tT^\circ = {b'_\tT}^\circ  $ for tangible $b'_\tT$.
%\end{notation}
%\todo[inline]{MA: be careful that in {\cite[Theorem 6.28]{Row21}} a triple
%need to satisfy
%also \eqref{int1}, otherwise it is called pseudo-triple. This holds when
%$\tT$ is uniquely negated, and maybe when it is $(-)$-bipotent
%or metatangible, then the two conditions are equivalent, but I did not
%find the equivalence written in \cite{Row21}.
%This means that in all the results below,
%we need to add this condition or the condition that
%$\tT$ is uniquely negated.
%Which one do we choose ?}

\begin{proposition}[{\cite[Theorem 6.28]{Row21}}]\label{circint3}
  Assume that the triple $(\mathcal{A},\tT,(-))$ is metatangible and
cancellative,  and that $b\in\mathcal{A}$.
Then, $b$ has a \textbf{uniform presentation} as follows:  for $m\ne 2$, $b=mb_{\tT}$ where $b_\tT \in \tTz$, and $b = b_\tT^\circ$ for $m=2.$ Furthermore if $m$ is minimal then $b$  has height $m$. \end{proposition}

%\todo[inline]{MA: before there was also the result that if $(\mathcal{A},\tT,(-))$ is also $(-)$-bipotent, then if $b=m'b'_{\tT}$ with $b'_\tT \in \tTz$,
%then $b_{\tT}= b'_{\tT}$ (it was not written $m=m'$!)}

%\todo[inline]{L: Yes, but do we need it? Furthermore I think in the exceptional case (iii) we have $m=m'=2$ since the triple is of second kind. Should I put that in?}

%

Define
\begin{equation}\label{eq:al} e= \one^\circ  = \one (-) \one , \qquad e' = e + \one =  \one (-) \one + \one  .\end{equation}

We handle the   case $m=2$
as follows,
for $(-)$-bipotent triples.

\begin{proposition}\label{prop-uni} In a $(-)$-bipotent cancellative  triple,
 \begin{enumerate}
 \item \label{prop-uni-i}
 If $a(-)a = a'(-)a'$ for $a,a'$
tangible then $a' = (\pm) a.$
 \item \label{prop-uni-ii} The uniform presentation $b=mb_{\tT}$ of  an element $b$ is
 unique, with the only exceptional case of $b={b'_\tT}^ \circ = b_\tT^ \circ$
 with $b'_\tT  = (\pm) b_\tT.$
 In particular $m$ is the height of $b$.
\end{enumerate}
 \end{proposition}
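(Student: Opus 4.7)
The plan is to prove (i) by a case analysis exploiting $(-)$-bipotence and the triple axiom, and then to derive (ii) by combining (i) with the immediately preceding proposition. For (i), I first dispose of the first-kind case, where $(\pm)a = \{a\}$ and the conclusion reduces to $a = a'$; a parallel argument to the second-kind one below, together with cancellativity of the $\tT$-action, handles this.

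Assume from now on that $(-)$ is of the second kind. Then $(-)$-bipotence gives $a+a = a$ and $a'+a' = a'$ (since $a \neq (-)a$ and $a' \neq (-)a'$). Adding $a$ to both sides of $a^\circ = {a'}^\circ$ and simplifying the left-hand side via $a+a = a$ yields
\[ a^\circ \;=\; a + {a'}^\circ \;=\; a + a' + (-)a'\enspace.\]
Assume toward a contradiction that $a \neq (\pm)a'$, so $a \neq a'$ and $a \neq (-)a'$. Then $(-)$-bipotence forces $a+a' \in \{a, a'\}$. In the sub-case $a+a' = a$, the right-hand side simplifies to $a + (-)a'$, which by $(-)$-bipotence (using $a \neq a'$) lies in $\{a, (-)a'\} \subseteq \tT \setminus \{\zero\}$, contradicting $a^\circ \in \mathcal{A}^\circ$ together with the triple axiom $\tTz \cap \mathcal{A}^\circ = \{\zero\}$. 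Hence $a+a' = a'$. The analogous derivation obtained by adding $a'$ instead of $a$ to the hypothesis forces $a'+a = a$, and commutativity of addition yields $a = a'$, contradicting $a \neq a'$; thus $a = (\pm) a'$.

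For (ii), suppose $b$ has height $m$ with uniform presentation $b = m b_\tT$, and let $b = m' b'_\tT$ be a second such presentation with $m' \geq 1$, $b'_\tT \in \tT$. The preceding proposition yields $b_\tT = b'_\tT$ when $m \neq 2$; when $m = 2$, it yields $b = b_\tT^\circ = {b'_\tT}^\circ$, and applying (i) to $b_\tT, b'_\tT$ gives $b'_\tT = (\pm) b_\tT$, which is exactly the exceptional case in the statement. The main obstacle is the first-kind case of (i): since there $(-) = \mathrm{id}$ and $a + a = a^\circ$ (rather than $a$), the case analysis must operate with height-2 expressions throughout, and cancellativity of the $\tT$-action is crucial to close the argument.
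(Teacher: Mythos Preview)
Your argument for (i) in the second-kind case is correct and in fact a little slicker than the paper's there: once $(-)a\ne a$ gives $a+a=a$, you reduce $a^\circ$ to $a+(-)a'\in\{a,(-)a'\}\subseteq\tT$ and invoke the triple axiom $\tTz\cap\mathcal A^\circ=\{\zero\}$ to get an immediate contradiction, never touching cancellativity. Part (ii) is also correct and matches the paper.

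The genuine gap is the first-kind case of (i), which you announce as disposed of by a ``parallel argument'' but never write out. There is no such parallel in the form you suggest: when $(-)=\mathrm{id}$ one has $a+a=a^\circ$ rather than $a$, so the key simplification $a+a^\circ=a^\circ$ fails and there is no route to landing $a^\circ$ inside~$\tT$. What actually works is a different mechanism. From $a^\circ={a'}^\circ$ and (say) $a+a'=a'$ one computes $a^\circ+a'=a'$ and $a^\circ+a=a^\circ$, i.e., $a'e'=a'$ and $ae'=ae$; cancellativity of the $\tT$-action then forces $e'=\one$ and $e'=e$, hence $e=\one$, and so $a=ae=a^\circ={a'}^\circ=a'e=a'$. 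This is exactly the paper's argument, which moreover runs it \emph{uniformly} for both kinds (using \cite[Lemma~2.15(i)]{Row21} to pass from $a'+a=a'$ to $a'(-)a=a'$), thereby avoiding the case split altogether. So your outline is salvageable, but the missing case needs the $e=\one$ mechanism via cancellativity, not a variant of your second-kind trick; your closing remark that ``cancellativity is crucial'' is correct precisely because this is the only way through in the first kind.
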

\begin{proof}
 \eqref{prop-uni-i} Assume on the contrary  that $a' \neq (\pm) a$. Then,
we may assume for instance that $a'+a = a'$.
By \cite[Lemma~2.15~(i)]{Row21}, this implies $a'(-)a = a'$. So
$a'(-)a'+ a' = a(-)a + a' = a',$ implying $e'=\one,$ but $a(-)a + a=
a'(-)a' + a = a'(-)a' = a(-)a,$ implying $e'=e,$ so $\one=e$.
Since $a(-)a = a'(-)a'$, this implies $a=a(-)a = a'(-)a'=a'$,
which contradicts $a' \neq (\pm) a$.
% $e'\in \tT^\circ \cap \tT,$ a contradiction to \Cref{circint1}.
%\todo[inline]{MA:
%it was written:
%$e'\in \tT^\circ \cap \tT,$ a contradiction to \Cref{circint1},
%but one need uniquely negated triple to use \Cref{circint1}.
%The new arguments do not need it.
%However, I realized that  \cite[Lemma~2.15(i)]{Row21} need also
%uniquely negated! so we need to add it except if everything works only with
%bipotent, see also above for the condition\eqref{int1}, which is indeed
%\Cref{circint1}. }

\eqref{prop-uni-ii} \cite[Theorem~6.28]{Row21} yields uniqueness except in the case
 $b={b'_\tT}^ \circ = b_\tT^ \circ$,  and then, by \eqref{prop-uni-i}, $b'_\tT  = (\pm) b_\tT.$
 \end{proof}

\begin{proposition}\label{neg3} Suppose the triple   $(\mathcal
A, \tT, (-))$ is cancellative and $(-)$-bipotent, and let $b, b' \in \mathcal A$ with
$b'_\tT \ne (\pm) b_\tT.$ Then one of the following holds:
\begin{enumerate}
\item %either
$b_\tT+b'_\tT = b_\tT$,
$b_\tT(-)b'_\tT = b_\tT$,
$b+b' = b$ and
 $b(-) b' = b,$
 \item %or
 $b_\tT+b'_\tT = b'_\tT$,
 $b'_\tT(-)b_\tT = b'_\tT$,
 $b+b' = b'$ and   $b'(-) b = b'$.
 \end{enumerate}
 \end{proposition}
\begin{proof} We start with \cite[Lemma~2.15~(i)]{Row21}, that $a+a' = a$ for $a\neq a'$ in $\tT$ implies $a(-)a' = a.$
Thus, if $b_\tT \ne (\pm) b'_\tT$ then $b_\tT + b'_\tT $ is either $b_\tT$ or $b'_\tT $.
In the first case  $b_\tT (-) b'_\tT = b_\tT$ and thus
since $b=c+b_\tT$ for some $c\in \mathcal A$, we get
$b (\pm) b' = c+ b_\tT  (\pm) b'_\tT  \dots (\pm) b'_\tT = c+b_\tT=b $ since the summand $b_\tT $ absorbs all the $b'_\tT $ and $(-) b'_\tT $.
In the second case, by the same reasoning, we get that
$b' +b = b'$ and $b'(-) b= b'$.
 \end{proof}

%\todo[inline]{MA: be carefull that I made typo corrections that were cancell ed since you did not take my version. Here I changed $\one$ in $\mathbf 1$.}

\begin{definition}\label{circidem} A  triple  $\mathcal A$ is
$\circ$-\textbf{idempotent} if $e^\circ  = e $.

 A
 \textbf{strongly $(-)$-bipotent} triple is a
$(-)$-bipotent
 triple for which  $e'=e$.
\end{definition}

The condition $e'=e$ holds in the supertropical system,
as recalled below (see \Cref{supert}), and is very
useful in computations.

\begin{lem}\label{ht2}$ $  \begin{enumerate}
 \item  \label{ht2-i}$a^\circ + a^\circ = a^\circ $ for
all $a\in \tT,$ in any $\circ$-idempotent triple.
    \item \label{ht2-ii} Any $\circ$-idempotent $(-)$-bipotent
 triple $\mathcal A$ has height $\le 3$. All elements of  height $ 3$
 are $ae'$ where $(-)\one = \one,$ and all elements of  height $ 2$ are $ae.$
  \item \label{ht2-iii} Any $(-)$-bipotent
 triple satisfying $e'=e$ or $e' = \one$ is also $\circ$-idempotent
and shallow.

%\item Any strongly $(-)$-bipotent triple of the second kind has height 2.
%     \item Any   $(-)$-bipotent
% triple of the first kind is  strongly $(-)$-bipotent
%of tropical type, with $\mathbf 3 = \mathbf 2$ or $\mathbf 3 =
%\mathbf 1$.
\end{enumerate}
 \end{lem}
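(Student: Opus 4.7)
The plan leverages two structural identities: the multiplicative form of the quasi-zero, $a^\circ = a(\one (-)\one) = ae$ (which follows from distributivity of the negation map), and the relation $(-)e = (-)\one + \one = e$, which gives $e^\circ = e + e$. For (i), I would factor $a^\circ + a^\circ = a(e+e) = ae^\circ$ and then apply $\circ$-idempotence, $e^\circ = e$, to conclude $a^\circ + a^\circ = ae = a^\circ$.

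For (ii), I would argue by induction on the number of tangible summands, showing that every sum $\sum_{i=1}^n a_i$ with $a_i \in \tT$ lies in $\tTz \cup \tT^\circ \cup \tT e'$, with the last piece occurring only in the first kind. The inductive step splits according to the form of the partial sum $b'$: if $b'$ is tangible, $(-)$-bipotence handles $b' + a_{n+1}$ directly; if $b' = ae$, I expand to $a + a + a_{n+1}$ and apply bipotence to $a + a_{n+1}$, noting that the only genuinely new possibility is $a_{n+1} = (-)a$, which gives $a + a^\circ = ae'$ in the first kind (since $(-)a = a$) and $a + a^\circ = a^\circ$ in the second (since bipotence forces $2a = a$); if $b' = ae'$ in the first kind, the new summand produces at worst $4a = 2a^\circ = a^\circ$ by part (i). This confirms the height bound $\le 3$, with the second-kind bound being $\le 2$. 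Height-$2$ elements must come from $a + a'$ with $a' = (-)a$, hence equal $ae$; height-$3$ elements in the first kind are $3a = ae'$.

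For (iii), I would first deduce $\circ$-idempotence from the hypothesis. If $e' = e$, i.e., $e + \one = e$, then applying the negation map to both sides yields $e + (-)\one = e$, so $e + e = (e+\one) + (-)\one = e + (-)\one = e$. If instead $e' = \one$, the analogous manipulation gives $e + e = (e + \one) + (-)\one = \one + (-)\one = e$. Either way $e^\circ = e$, so part (ii) applies. Shallowness then follows because heights $\le 2$ land directly in $\tTz \cup \tT^\circ$, while any height-$3$ element $ae'$ becomes $ae = a^\circ$ when $e' = e$, or $a$ when $e' = \one$. The main obstacle throughout is the bookkeeping in (ii), particularly isolating the single first-kind route to height~$3$ from the many collapsing cases.
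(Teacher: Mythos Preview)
Your proof is correct and tracks the paper's argument closely: part~(i) is identical, part~(iii) is the same computation (the paper writes it compactly as $e^\circ = e'(-)\one$ and substitutes the hypothesis), and for part~(ii) the paper shows that any sum of four tangibles collapses to at most three (by $(-)$-bipotence if some pair $a_i,a_j$ satisfies $a_i\neq(-)a_j$, and by $\circ$-idempotence otherwise), which is just a reorganization of your forward induction tracking membership in $\tTz\cup\tT^\circ\cup\tT e'$. One notational slip: you expand $b'=ae$ as ``$a+a+a_{n+1}$'', but $ae=a(-)a$, which equals $a+a$ only in the first kind; fortunately your ensuing case split treats both kinds correctly.
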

\begin{proof} \eqref{ht2-i} $a^\circ + a^\circ = a(e+e) = a e^\circ = a e=a^\circ .$

%\todo[inline]{MA: proof detailed below}{LHR I added to the statement and proof}
\eqref{ht2-ii} The sum of four elements $a_i$ of $\tT$ is reduced to a
sum of three elements of $\tT$. This is immediate when $a_i\neq (-) a_j$ for some $i\neq j$.
Otherwise, $a_i=(-)a_j$ for all $i\neq j$, in particular $a_1=(-)a_2=a_3=(-)a_4$,
so  $\sum_i a_i=(a_1^\circ)^\circ=a_1((e^\circ)^\circ) = a_1e^\circ = a_1^\circ$, which is the sum of two elements of
$\tT$. Hence, by induction, all the elements of $\mathcal A$
have  height~$\le 3$.
The same arguments also show that elements
of height $3$ are of the form
$ae'$ with $(-)a=a$, and elements of height~ $2$ are of the form $a^\circ$.
%We also have that $a^\circ+ b^\circ$ is either $a^\circ,$ $b^\circ$,
%and that % or $(a^\circ)^\circ =  a^\circ$.
Also,
$a^\circ+ b$ is either $a^\circ,$ $b$, or (in case $a = (\pm b)$) $(\pm) a e'$.

\eqref{ht2-iii} If $e'=e$ or $e' = \one$, then
either respectively $e^\circ = e' (-) \one = e (-) \one = (-)e
= e $ or $e^\circ = e' (-) \one = \one (-) \one =  e $, yielding
$\circ$-idempotence.
Moreover, by the previous item, there are no
elements of $\mathcal A$ of height~3, so $\mathcal A$ is shallow.
 %(iv)   $(-)\one =\one,$ so $e  = \one+ \one  = \mathbf 2$, and $e' =
%\one+ \one + \one = \mathbf 3$; hence $\mathbf 3 \in \{ \mathbf
%1,\mathbf 2\}$.
\end{proof}

%The condition $e'=e$ already gives a strong tropical flavor.
%
%\begin{lem}\label{eprime} If $e'=e$ in a triple then $(-)$ is of the
%first kind and $\mathbf 3 = \mathbf 2$.\end{lem}\begin{proof}
%\end{proof}

%\subsubsection{$(-)$-Regular triples}$ $

\subsection{More about surpassing relations and systems}\label{mex77}$ $

\begin{definition}[{\cite[Definition 2.36]{Row21}}]\label{syst}
 A  system is \textbf{strong} if $ \preceq$ is a strong surpassing relation.
\end{definition}

\begin{example}\label{ex-acirc}
Non-hyperring examples of surpassing POs on
a triple with  ${\mathcal A}_{\Null}\ne \mathcal A^\circ$:
 \begin{enumerate}
 \item  This example is in the spirit of hyperrings (\Cref{hyp}), but lacking  uniqueness
  of hypernegatives. $ \mathcal A$ is the set of closed intervals $\mfrak b$ on the real
 line, and   $\tT = \{  [a,a]: a \in \Real\}.$ Put $\zero = [0,0]$, and addition is given by $ \mfrak b + \zero = \zero +\mfrak b =\mfrak
 b$,
  and for $ \mfrak b_1, \mfrak b_2\ne \zero $, $ \mfrak b_1 + \mfrak b_2$ is
  the convex hull of $ \mfrak b_1 \cup \mfrak b_1,$ in analogy to hyperfields.  This is
  associative, since associativity holds outside $\zero$, and $\zero$ cannot be written as a proper sum. The negation map $(-)$ is given by
  the classical negation of the interval. We
  say that $ \mfrak b_1 \preceq  \mfrak b_2$ if $ \mfrak b_1 \subseteq \mfrak
  b_2.$ Then $ \mathcal A^\circ $ is the set of intervals $[-a,a],$
  for $a \ge 0,$ and for any $a \in \Real$, we have $[a_1,a_1] +
  [a_2,a_2] = [-a,a]$  only when $a_1 = -a$ and  $a_2 =  a$ or visa
  versa.
 But any interval $[a_1,a_2]$ containing 0 succeeds 0 by
  definition, and  need not be in $ \mathcal A^\circ $.
  %$ \mathcal A$ is (additively) idempotent.
  \item We take $\tT = (\mathbb Z,+)$ and  $ \mathcal
  A$ to be the subset of the power set of $\tT$ spanned by all sums
  under the action:
  $$ a + \zero = \zero + a =a, \qquad a_1 + a_2 = \{ 0, \ a_1, \ a_2,
  \ a_1+a_2\},\quad \forall a_1 , a_2 \ne \zero.$$ Addition in $ \mathcal
  A$ of sets
  is defined element wise, so in particular $ a_1 + a_2 =  \{ 0, a_1\} + \{ 0, a_2
  \}.$ Associativity follows easily from element wise associativity.
 Now $a (-)a = \{ 0, \pm a\}$ and has only three
  elements, and $ a_1+a_2 \in  \mathcal A^\circ $  only when $ a_1 = -a_2.$
  Since $ A^\circ$ is
  symmetric under negation,
 the set $1+2= \{ 0, 1, 2, 3 \}$ cannot be in  $ \mathcal
  A^\circ.$
 \end{enumerate}
\end{example}

 The surpassing relation is very important,
providing ${\mathcal A}_{\Null} $ which   plays a
critical role; in a system it is disjoint from $\tT$.
  In most of our theorems generalizing classical algebra,
 $\preceq$ replaces equality and
 $\mathcal A_{\operatorname{Null}}$ replaces $\zero$.

\begin{rem}$ $
    \begin{enumerate}
        \item When   $(\mathcal A, \tT, (-))$ is
a shallow triple we usually take $\preceq= \preceq_\circ $.

  \item The  $\circ$-surpassing relation is  a partial order precisely when $\mathcal A$ satisfies the property that

  $a + c_1^\circ  + c_2^\circ = a$ implies   $a + c_1^\circ = a$.

    \end{enumerate}
\end{rem}

\begin{example}\label{supert}$ $
\begin{enumerate}  \item
The main triple of tropical algebra is $(\mathcal A, \tT, (-))$,
where $\mathcal A$ is the supertropical semiring of~$\tT$,
according to the terminology of \cite[Definition~3.4]{IR},  $\tT$ is
any ordered abelian group, called the set of ``tangible elements'',
and the negation map $(-)$ is the identity map. Then, $a^\circ = a+a$ is written
as the ``ghost'' element $a^\nu$, $\tT^\nu$ is a copy of $\tT$, $\tG
= \tT^\nu\cup\{\zero\}$ is the ``ghost ideal'' which is the same as
${\mathcal A}_\Null$, and $\mathcal A =  \tT \cup \tG$.
It is a strongly $(-)$-bipotent  triple of the first kind.

Moreover $(\mathcal A, \tT, (-))$
becomes a strong system, where the
surpassing partial order $ \preceq_\circ $ is called ``ghost surpassing'' in
\cite{IR}; here $a \preceq_\circ b$
  whenever $a + c = b$ for some $c \in \tG$.

 We shall call $(\mathcal A, \tT, (-), \preceq_\circ )$  the
 \textbf{supertropical triple} or \textbf{system}.

A $(-)$-bipotent system with a $\circ$-surpassing relation is {\it not}
 of weak hyper-type, since one can take $b_1+b_2 = b_2$ whereas $b_1^\circ + b_2^\circ \in \mcA^\circ.$
%\todo[inline]{MA: I have replaced $\one$ by $(-)$ in $(\mathcal A, \tT, \one )%$, Fine}
 \item  The
\textbf{symmetrized system} is
$(\widehat{\mathcal A}, \tT_{\widehat{\mathcal A}},(-), \preceq_\circ) $,
as in \Cref{semidir38}, when  $\tT$ additively generates
$\mathcal A \setminus \zero$,
or $(\widehat{\mathcal A}', \tT_{\widehat{\mathcal A}},(-), \preceq_\circ) $,
as in \Cref{semidir39}, when $\tT$ is a bipotent subsemigroup of
$({\mathcal A},+)$. %More properties are given in
%\cite[\S~3.3]{AGR1}.

%
% The
% symmetrized triple $\widehat {\mathcal A }$ is idempotent, as well as strongly $(-)$-bipotent, and
% is of tropical type, seen component-wise.

\end{enumerate}\end{example}

%\todo[inline]{warning, the next example is in both paper, make the presentation not strictly identical if possible}

% \cite{Row21}, but holds
The supertropical system and
the symmetrized system are strong.
As we shall see in next section, hypersystems are also strong.

\subsubsection{Systems obtained from a  $\tT$-sub-bimodule $\mathcal I \supseteq \mathcal A_{\circ}$}\label{mex17}$ $

Although the $\circ$-surpassing relation is the natural surpassing
relation that comes up in tropical mathematics, there are occasions
  in which we want a more general formulation
which enables us to choose ${\mathcal A}_{\Null}$ to be any chosen $\tT$-sub-bimodule $\mathcal I \supseteq \mathcal A^{\circ}$  of $\mathcal A$ disjoint from $\tT$
(recall that $\mathcal I$ is then necessarily stable under
the negation map).

 \begin{definition}
\label{precex-2}
Given a triple  $(\mathcal A, \tT, (-))$ and a
  $\tT$-sub-bimodule $\mathcal I \supseteq \mathcal A^{\circ}$ of $\mathcal A$, we define the  $\mathcal I$-\textbf{pre-order}  $ \preceq_{\mathcal I}
  $ by
    $b_1  \preceq_{\mathcal I} b_2$ if  $b_2  = b_1 +c$ for some $c \in \mathcal
    I.$

%% A subset $S \subset {\mathcal A}$  is \textbf{uniquely quasi-negated over $\mathcal
%% I$} if $a(-) a' \in \mathcal I$ for $a,a' \in S$ implies $a= a'$.
%% (We usually take $S=
%% \tT $  or $S=
%% \tTz $.)

\end{definition}

  As a special case of \Cref{precex-2}, $ \preceq_{\mathcal I}$ is $ \preceq_{\circ}$
      when $  \mathcal I={\mathcal A}^\circ.$

   \begin{lem}\label{newsys}$ $  \begin{enumerate}
 \item\label{newsys-1}  The $\mathcal I$-pre-order   $ \preceq_{\mathcal I}$ is a $\tT$-pre-order. % satisfying
%\eqref{precedeq07-1} of  Definition~\ref{precedeq07}.
%\item \label{heigh}   The  relation
%$ \preceq_{\mathcal I}$   also satisfies \eqref{precedeq07-2}  of
%Definition~\ref{precedeq07}.

\item \label{heigh1}   The  relation
  $ \preceq_{\mathcal I}$    is a surpassing relation if
  one of the following equivalent conditions holds:
  \begin{enumerate}
    \item\label{newsys-1a}     $\tTz $   is uniquely quasi-negated over $\mathcal I$;
  \item\label{newsys-1b}  $ \mathcal  I \cap \tT= \emptyset,$ and $\tT$   is uniquely quasi-negated over $\mathcal I$.
\end{enumerate}
\item \label{newsys-4} In case the previous conditions hold, we obtain the system $(\mathcal A, \tT, (-), \preceq_{\mathcal
 I})$,
for which ${\mathcal A}_{\Null} = \mathcal I.$
\end{enumerate}
 \end{lem}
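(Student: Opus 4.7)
The plan for item (i) is to unpack the definition of $(-)$-pre-order and verify each clause in turn. Reflexivity and transitivity of $\preceq_{\mathcal I}$ come from $\zero=\zero^\circ\in\mathcal A^\circ\subseteq\mathcal I$ and closure of $\mathcal I$ under addition; the two module-relation axioms reduce to additive closure of $\mathcal I$ and closure of $\mathcal I$ under the $\tT$-action, both built into the assumption that $\mathcal I$ is a $\tT$-submodule. The axiom $\zero\preceq_{\mathcal I}b^\circ$ is immediate since $b^\circ\in\mathcal A^\circ\subseteq\mathcal I$, and compatibility of $\preceq_{\mathcal I}$ with $(-)$ uses the stability of $\mathcal I$ under the negation map, noted right after \Cref{modu-order} as a consequence of $\mathcal I$ being a $\tT$-submodule containing $(-)\one\cdot c$ whenever it contains $c$.

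For the equivalence of conditions (a) and (b) in item (ii), I argue in both directions. Assuming $\tTz$ is uniquely quasi-negated over $\mathcal I$, the unique quasi-negation of $\tT$ over $\mathcal I$ is just the restriction, and any $a\in\mathcal I\cap\tT$ would give $a(-)\zero=a\in\mathcal I$, forcing $a=\zero$ and contradicting $\zero\notin\tT$; so disjointness follows. For the converse, given $a,a'\in\tTz$ with $a(-)a'\in\mathcal I$, I split into cases: both in $\tT$ (unique quasi-negation gives $a=a'$), both $\zero$ (trivial), and the mixed case, say $a=\zero$ and $a'\in\tT$, which is ruled out because $a(-)a'=(-)a'$ lies in $\tT$ (the negation map stabilizes $\tT$) and would contradict $\mathcal I\cap\tT=\emptyset$.

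For the remaining implication of (ii), that either condition makes $\preceq_{\mathcal I}$ a surpassing relation, the only axiom beyond (i) to verify is that $a\preceq_{\mathcal I}a'$ with $a,a'\in\tTz$ forces $a=a'$. Unfolding $a'=a+c$ for some $c\in\mathcal I$ yields $a'(-)a=a^\circ+c\in\mathcal I$, using $\mathcal A^\circ\subseteq\mathcal I$ together with additive closure; the unique-quasi-negation hypothesis then gives $a=a'$. For item (iii), the null set computes directly: $\mathcal A_\Null=\{b:b\succeq_{\mathcal I}\zero\}=\{\zero+c:c\in\mathcal I\}=\mathcal I$, and the defining property of a system---unique quasi-negation of $\tTz$ over $\mathcal A_\Null$---is literally the standing hypothesis (a). The main subtlety throughout is the bookkeeping in the second paragraph: the disjointness condition must be invoked precisely to exclude the mixed tangible/zero case, and the convention $\zero\notin\tT$ underlies the whole statement, since otherwise $\zero\in\mathcal A^\circ\subseteq\mathcal I$ would trivially violate (b).
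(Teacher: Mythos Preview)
Your proof is correct and follows essentially the same route as the paper's: verify the $(-)$-pre-order axioms from the $\tT$-submodule properties of $\mathcal I$, derive the surpassing axiom from unique quasi-negation via $a'(-)a=a^\circ+c\in\mathcal I$, and prove the equivalence of (a) and (b) by the four-case split on $\tTz=\tT\cup\{\zero\}$. In a couple of places you are more explicit than the paper (reflexivity, the computation $a'(-)a=a^\circ+c$, and the check that the system axiom in \eqref{newsys-4} is literally condition~(a)); one minor slip is the cross-reference---the stability of a $\tT$-submodule under $(-)$ is noted right after \Cref{negmap}, not after \Cref{modu-order}.
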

\begin{proof} \eqref{newsys-1}:   If    $b_1 \preceq b_2$ and $b_2 \preceq b_3$ then
   $b _i + c_i = b_{i+1}$  for $i= 1,2$, with $c_i\in{\mathcal I}$,
   so   $b_1 + c_1+c_2 = b_{3}$ and $b_1 \preceq b_3$, since $\mathcal I$ is a module. So $\preceq_{\mathcal I}$ is a $\tT$-pre-order.

Similarly, if    $b_1 \preceq b_1'$ and $b_2 \preceq b_2'$ then
   $b _i + c_i = b_i'$  for $i= 1,2$, with $c_i\in{\mathcal I}$,
   so   $b_1 + b_2 \preceq
        b_1'
   + b_2'$, since $\mathcal I$ is a module.
   Moreover, for $a\in \tT$, we have
   $ab_1 + ac = ab'_1$, with $ac\in {\mathcal I}$ since
   $\mathcal I$ is a $\tT$-gen bimodule.
This implies  that $ \preceq_{\mathcal I}$ is a $\tT$-pre-order.

Condition \eqref{surp-1} of \Cref{precedeq07} is given.

 \eqref{heigh1}:
 The verification of  \eqref{precedeq07-2}  of  \Cref{precedeq07}  is in  \cite[Proposition~2.31]{Row21}. When $\tTz $   is uniquely quasi-negated over $\mathcal I$, if
  $a \preceq_{ \mathcal
    I} a' $ for $a,a' \in
\tTz,$ then writing $a' = a+ c$ with
 $c \in \mathcal
I,$ we have $a'(-)a \in \mathcal I$, so $a' = a$.
This shows that  $ \preceq_{\mathcal I}$    is a surpassing relation.

Moreover, $\tTz $ is uniquely quasi-negated over $\mathcal I$,
if and only if for all $a,a'\in \tTz$, $a(-)a' \in \mathcal I$ implies that
$a=a'$. Taking   $a,a'\in \tT$, we get that
 $\tT$ is uniquely quasi-negated over $\mathcal I$, and taking
$a\in\tT$ and $a'=\zero$, we get that $ \mathcal  I \cap \tT= \emptyset$.
Conversely if   \eqref{newsys-1b} holds, then  \eqref{newsys-1a} holds, seen
by checking the four cases: $a\in \tT$ or $a=\zero$ and
$a'\in \tT$ or $a'=\zero$ and using that $\mathcal I$ and $\tT$ are
closed by $(-)$.

    \eqref{newsys-4} ${\mathcal A}_{\Null} = \{ b: \zero \preceq_{\mathcal I} b\} = {\mathcal I}.$
\end{proof}
 \begin{rem}
     In
the other direction, given a  system $(\mathcal A, \tT, (-),
\preceq)$, the
 surpassing relation $\preceq_\Null$    is given by
 $ \preceq_{\mathcal I}$ for $\mathcal I = {\mathcal A}_{\Null}.$
It is stronger than the given surpassing relation and weaker than
the $\circ$-surpassing relation; i.e., $ b \preceq_\circ b' \Rightarrow
b\preceq_\Null b'\Rightarrow b\preceq b'$.
 \end{rem}

There could be other such systems, obtained by extending $\preceq_
{\mathcal I}$ to include comparisons other than with $\zero.$
Nevertheless, $\preceq_ {\mathcal I}$ will be one of the two main
surpassing relations that we consider, together with set inclusion in
\Cref{hypersys}. It respects products in the following sense:

\begin{lem}\label{biphom1} If  $(\mathcal A, \tT, (-),\preceq_ {\mathcal I})$ is a semiring system for a $\tT$-sub-bimodule $\mathcal I$ of $\mathcal A$, and
$a\preceq_{\mathcal I}c$ and $a'\preceq_ {\mathcal I}
 c'$, then $aa' \preceq_{\mathcal I} cc'.$
\end{lem}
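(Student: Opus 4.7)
The plan is to unfold the definition of $\preceq_{\mathcal I}$ and use distributivity in the semiring $\mathcal A$. From $a \preceq_{\mathcal I} c$ and $a' \preceq_{\mathcal I} c'$ we obtain elements $d, d' \in \mathcal I$ with $c = a + d$ and $c' = a' + d'$. Expanding the product in the semiring $\mathcal A$ yields
\[
cc' = (a+d)(a'+d') = aa' + ad' + da' + dd',
\]
so it suffices to show that $e := ad' + da' + dd'$ lies in $\mathcal I$; this will give $aa' \preceq_{\mathcal I} cc'$ directly from \Cref{precex-2}.

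To check that $e \in \mathcal I$, I would write $a = \sum_i a_i$ and $a' = \sum_j a'_j$ with $a_i, a'_j \in \tTz$, using the fact that $\tTz$ additively generates $\mathcal A$ (condition \eqref{triple2} of \Cref{int1}). Distributing, we get
\[
ad' = \sum_i a_i d', \qquad da' = \sum_j d a'_j, \qquad dd' = \sum_i a_i d \cdot \tfrac{1}{1},
\]
where the last term is treated by writing $d$ itself as a $\tTz$-sum (or, equivalently, by noting $dd' = d \cdot d' = \sum_k (d_k d')$ after a similar decomposition of $d$). Each summand like $a_i d'$ belongs to $\mathcal I$ since $d' \in \mathcal I$ and $\mathcal I$ is a $\tT$-submodule; closure of $\mathcal I$ under addition (it is a subsemigroup) then gives $ad' \in \mathcal I$. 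The same holds for $dd'$ by decomposing $d$.

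The main obstacle is the term $da'$, since it requires closure of $\mathcal I$ under \emph{right} multiplication by elements of $\tT$. In the semiring setting we are considering, this is precisely the statement that $\mathcal I$ is a $\tT$-bi-submodule (equivalently, a two-sided ideal in $\mathcal A$); note that the distinguished case $\mathcal I = \mathcal A^\circ$ is automatically two-sided because $(ab)^\circ = a b^\circ = a^\circ b$ from \Cref{negmap}, and more generally this two-sidedness is the natural standing hypothesis on $\mathcal I$ in a \emph{semiring} system. Under that (implicit) assumption each $d a'_j \in \mathcal I$ and hence $da' \in \mathcal I$. Combining the three inclusions and using that $\mathcal I$ is closed under addition, we conclude $e \in \mathcal I$ and therefore $aa' \preceq_{\mathcal I} cc'$.
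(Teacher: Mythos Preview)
Your approach is exactly the paper's: write $c=a+b$, $c'=a'+b'$ with $b,b'\in\mathcal I$, expand $cc'=aa'+(ab'+ba'+bb')$, and conclude. The paper's proof is a single line and does not comment on why $ba'\in\mathcal I$; you are actually being more careful than the paper in flagging that this step uses right $\tT$-closure of $\mathcal I$ (which is implicit in the semiring setting, and automatic when $\tT$ is commutative or when $\mathcal I=\mathcal A^\circ$). One cosmetic point: your displayed expression for $dd'$ involving ``$\tfrac{1}{1}$'' is garbled --- you simply mean to decompose $d=\sum_k d_k$ with $d_k\in\tTz$ and use left closure to get each $d_k d'\in\mathcal I$.
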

\begin{proof} Write $a + b = c$ and $a' + b' = c'$ for $b,b' \in \mathcal I.$ Then $aa'+ ( ba' +ab' + bb') = (a + b )(a' + b' ) = cc'.$
\end{proof}

%% \subsection{The balance relation $\nabla_{\mathcal I}$ versus the surpassing relation $\preceq_{\mathcal I}$}
%%  $ $

\section{The doubling functor}\label{do}$ $

The goal of this section is to create a negation map (of second kind), reminiscent of constructing $\Z$ from $\Net.$

\begin{example}\label{semidir38} An example of
considerable interest, based on \cite{Ga,AGG2})  and to be used extensively in
\cite{AGR1}, is  \textbf{symmetrization}.

\begin{enumerate}
    \item Define $\widehat{\mathcal A} := \mathcal A
\times \mathcal A$ with the pointwise addition. We think of the first component as a positive copy of $\mcA,$ and the second component as a negative copy of $\mcA.$

\item Define $ \tT_{\widehat{\mathcal A}} = (\tT \times \{ \zero \})
\cup (\{\zero \}\times \tT)
  ,$ and $\widehat{\mathcal A}_\zero = \{(b_0,b_1): b_0+b_1 \in \mcA_0\}.$

 \item The \textbf{twist action}
on $\widehat{\mathcal A}$ over $ \tT_{\widehat{\mathcal A}}$
 is defined as follows:
 \begin{equation}\label{twi} (a_0,a_1)\ctw (b_0,b_1) =
 (a_0b_0 + a_1 b_1, a_0 b_1 + a_1 b_0),\ a_i \in \mathcal T, b_i \in \mathcal A.  \end{equation}

 \item The \textbf{twist product}
on $\widehat{\mathcal A}$ is defined also as $\eqref{twi}$, but now for $a_i,b_i\in \mcA.$

 % Thus we have the injection $\mathcal A \to \widehat {\mathcal A }$ given
%by $b \mapsto (b,\zero).$

\item $\widehat { \mathcal A }$   has the ``switch''  map given
 by $(-)(b_0,b_1)  = (b_1,b_0)$.
Thus, on $ \tT_{\widehat{\mathcal A}}$, we have $(-)(a,0) = (0,a)$, and we have
 unique negation  of
the second kind. The quasi-zeros have the form
 $(a,a)$ with $a\in {\mathcal A}$

 If $f,g: (\mcA,\tT)\to (\mcA',\tT')$ are maps, then  define $(f,g):  \widehat{\mathcal A}\to   \widehat{\mathcal A}'$ given by $(f,g)(b_0,b_1) = (f(b_0)+g(b_1),f(b_1)+g(b_0)).$
\end{enumerate}
\end{example}

\begin{theorem}\label{func1} $ $\begin{enumerate}
    \item If $ \mcA$ is a \distributed   $\tT$-gen bimodule of height $t$, then $(  {\widehat{\mathcal A}}, \tT_{\widehat{\mathcal A}},(-))$ is a triple, of height $2t$.

\item If $f,g: (\mcA,\tT)\to (\mcA',\tT')$ are morphisms in $\MMod$, then there is a  morphism  $(f,g)\in \MModT$ given by $(f,g)(b_0,b_1) = (f(b_0)+g(b_1),f(b_1)+g(b_0)).$
(Likewise for  $\MB$ and $\MSr.$)

\item If $f$ and $g$ are weak morphisms, then so is $(f,g).$

    \item Any surpassing relation $\preceq$ on $\mcA$ induces a surpassing relation on $ \widehat{\mathcal A}$, given by $(b_0,b_1) \preceq (b_0',b_1')$ if $b_0 +b_1' \preceq b_0'+b_1.$   If $f$ and $g$ are $\preceq$-morphisms, then so is $(f,g).$

    \item
  There is a {\it doubling functor} from the category $\MMod$ to    $\MModT$, given by $(\mathcal A,\tT) \to ({\widehat{\mathcal A}}, \tT_{\widehat{\mathcal A}},(-))$ where $(-)$ is the switch  map, and sending
a morphism $f \to (f,f).$ This functor displays $\MModT$ as a reflective subcategory of $\MMod$. $(\mathcal A,\tT)$ is a retract of $(  {\widehat{\mathcal A}}, \tT_{\widehat{\mathcal A}},(-))$, seen via
the projection $ (  {\widehat{\mathcal A}}, \tT_{\widehat{\mathcal A}},(-))\to (\mathcal A,\tT) $ onto the first coordinate, which is a dominant functor.

Likewise $\WMModT$ is a  reflective subcategory of $\WMod$, and $(\MModT,\preceq)$ is a  reflective subcategory of $(\MMod,\preceq)$.
\end{enumerate}
\end{theorem}
\begin{proof}
(i),(ii) First note that $\tT_{\widehat{\mathcal A}}$ is a monoid, with unit element $(\one,\zero).$
 ${\widehat{\mathcal A}}$  is indeed a \distributed $\tT_{\widehat{\mathcal A}}$-bimodule, by a straightforward calculation:
 $$\begin{aligned} \sum _i (a_1,a_2)(b_{0,i},b_{1,i}) & = \sum (a_1 b_{0,i}  +a_2b_{1,i}, a_1 b_{1,i} + a_2 b_{0,i}) \\ & = (a_1 \sum b_{0,i} + a_2 \sum b_{1,i},\, a_1 \sum b_{1,i} + a_2 \sum b_{0,i}) \\ & = (a_1 ,a_2) \sum (b_{0,i},b_{1,i}).
 \end{aligned}$$ Obviously if $b_j$ has height $t_i$ then $(b_0,b_1)$ has height $t_1+t_2.$
 The functorial properties are easily verified.

(iii) Suppose $\sum _i (b_{0,i},b_{1,i} )\in {\widehat{\mathcal A_0}}$. Then  $\sum _i b_{0,i}+\sum _i b_{1,i} \in \mathcal A_0,$ implying $$(f,g)(\sum  (b_{0,i},b_{1,i} ) = f(\sum _i (b_{0,i})+g(\sum _i b_{1,i}, f(\sum _i b_{1,i})+g (\sum _i (b_{0,i}) \in {\mathcal A_0}\times {\mathcal A_0} ,$$
since $f(\sum _i (b_{0,i})+  f(\sum _i b_{1,i})\in {\mathcal A_0}$ and  $g(\sum _i (b_{0,i})+  g(\sum _i b_{1,i})\in {\mathcal A_0}$.

 (iv) We check the axioms of surpassing relation by components, noting that $(b_0,b_1)(-)(b_0,b_1) = (b_0+b_1,b_0+b_1) \succeq (\zero,\zero).$

 Now  \begin{equation}
     \begin{aligned}
         (f,g)(\sum  (b_{0,i},b_{1,i} ) & = \left(f(\sum _i b_{0,i})+g(\sum _i b_{1,i}), f(\sum _i b_{1,i})+g (\sum _i (b_{0,i})\right)\\& \preceq \left(\sum _i  f (b_{0,i})+\sum _i g(b_{1,i}), \sum _i  f( b_{1,i})+\sum _i g (b_{0,i}) \right) \\& = \sum (f,g)   (b_{0,i},b_{1,i} ) .
     \end{aligned}
 \end{equation}

 (v) Put together (i) thru (iv).
\end{proof}

 \begin{example}\label{netneg} $\widehat{\Net}$ is the symmetrization of $\Net$,
   according to \Cref{semidir38},
 where we write $m(-)n$ for $(m,n)$. Thus $(m_1(-)n_1)(m_2(-)n_2) = (m_1m_2 + n_1n_2)(-)(m_1n_2+n_1m_2).$

 $\widehat{\Net}$ acts on any $\tT$-bimodule $\mathcal A $ with negation via $(m(-)n)b = mb (-) n b.$
\end{example}

\begin{example}\label{semidir39} As defined in \Cref{semidir38}, the symmetrized triple
is not $(-)$-bipotent. To remedy this situation, as in \cite{Ga,Pl,AGG2},
 we can modify addition in the case that $\tT$ is a bipotent sub-semigroup
of $(\mathcal A,+)$,
by defining $$(a,\zero)+ (\zero,a')=
\begin{cases} (a,\zero) &\text{if } a+a' = a\neq a',\\
 (\zero,a') &\text{if } a+a' = a'\neq a, \\
 (a,a) &\text{if } a' = a ,\end{cases}
 \quad{and}\quad
%and
%$$
(a,\zero)+ (a',a')=
\begin{cases} (a,\zero) &\text{if } a+a' = a\neq a',
\\  (a',a') &\text{if } a+a' = a',\end{cases} $$
for all $a,a'\in \tT$.
Now in \Cref{semidir38} take instead the $\tT_{\widehat{\mathcal A}}$-bimodule $\widehat{\mathcal A}' =\{(\zero,\zero)\}\cup \tT_{\widehat{\mathcal A}} \cup
\tT_{\widehat{\mathcal A}} ^\circ .$
Then, $(\widehat{\mathcal A}', \tT_{\widehat{\mathcal A}},(-))$ is a
$(-)$-bipotent triple  of second kind.
\end{example}

\section{Hypersystems}\label{hypersy}$ $

Our next main application   comes from hyperfields, as explained in
\cite[\S 3.4]{Row21}, which we now elaborate.
\subsection{Seeing hyperrings as systems}

 We denote by $\nsets (\Hy)$ the subset of non-empty subsets of a set $\Hy$.
  \begin{definition}\label{hyp} A monoid $(\Hy,\boxplus,\hyzero)$ is a \textbf{hyperring}
 when
\begin{enumerate}
   \item \label{hyp1}
$\boxplus$ is a commutative binary operation $\Hy \times \Hy \to
\nsets (\Hy)$ (the set of non-empty subsets of $\Hy$),
which also is associative in the sense that if we
 define
\[ a \boxplus S = S\boxplus a =\bigcup_{s \in S} \ a \boxplus s,
% \quad S\boxplus S' =\bigcup_{s \in S} \ s \boxplus S',
\]
 then $(a_1
\boxplus a_2) \boxplus a_3 = a_1 \boxplus (a_2\boxplus a_3)$ for all
$a_i$ in $\Hy.$
% (We view $\nsets (\Hy)$ as a semigroup with $ \boxplus$ defined
% element wise.)
  \item \label{hyp2}
    $\hyzero$, called \textbf{the zero}, is the neutral element:
    $\hyzero \boxplus a =\{a\}$.
   \item \label{hyp3} every element $ a \in \Hy$ has a unique \textbf{hypernegative} $-a \in \Hy$, in the sense that $\hyzero \in a \boxplus
   (-a).$
\item  \label{hyp4} (reversibility) $\quad  a_1 \in a_2 \boxplus a_3 \quad \text{iff} \quad a_3 \in
a_1  \boxplus (-a_2).$
\end{enumerate}
We write $ S_1 \boxplus S_2 := \cup\{ s_1 \boxplus s_2: s_i \in
S_i\}.$

\end{definition}

Note that a negation map on $\Hy$ induces a negation map element wise on
$\nsets (\Hy)$, i.e., $$(-)S = \{ (-)\ell:\ell \in S \}.$$

 Connes-Consani~\cite[\S 2]{CC} and  Jun~\cite[Definition 2.1]{Ju} call this structure  a \textbf{canonical
 hypergroup}; Marshall~\cite{M} and Viro~\cite[Definition 3.1]{Vi} defined an equivalent notion,
 which they called a
 \textbf{multigroup}. A \textbf{multiring}~\cite[Definition 4.1]{Vi}
is
a hypergroup with associative multiplication such that $(\Hy,\cdot,\one)$ is a
monoid and $0$ is absorbing ($\hyzero a =a \hyzero=\hyzero$ for all $a\in
\Hy$), extended to all subsets of $\Hy$ by defining
   \begin{equation}\label{hypmul} S_1  S_2 = \{ s_1\cdot s_2: s_i \in S_i\},\end{equation}
satisfying
 \begin{equation}\label{mul} \{a \} ( \boxplus_i s_i) \subseteq  \boxplus_i (a \cdot s_i),\quad a, s_i\in \Hy.\end{equation}
In particular, if $S_1  =  \{ s_1\} $ and $S_2 =  \{ s_2\} $ are
singletons, then $S_1 S_2 = \{s_1\cdot s_2\}$ is a  singleton.
Then the elements of $\Hy$ can be identified with the singletons.
With this
multiplication, $\nsets (\Hy)$ satisfies the analogous axioms of a
semigroup, with $\boxplus$ instead of~$+$; in particular the zero
is absorbing ($\hyzero S =S \hyzero=\hyzero$, for $S\in  \nsets (\Hy)$).

A \textbf{hyperring}
is a multiring that is ``singly-distributive'', meaning that
multiplication by an element of $\Hy$ is distributive over addition
on~$\Hy$, i.e., equality holds in \eqref{mul}. In that case, $\nsets (\Hy)$ is an $\Hy$-bimodule, for the action
$a S=\{a\} S=\{as: a\in S\}$. We denote by $\overline{\Hy}$ the
additive submonoid
 of ${\nsets (\Hy)}$ generated by $\Hy$ (viewed as singletons).

A \textbf{hyperfield} (Connes-Consani~\cite[Definition~2.1]{CC}) is
a hyperring with
 $(\tT,\cdot,\one)$  an abelian group, where $\tT=\Hy\setminus\{\hyzero\}$.
%Although our results can be formulated for hyperrings,
% we restrict ourselves to hyperfields, for simplicity.
%\begin{remark} $a_1\boxplus a_2$ is nonempty.\end{remark}

As noted by Henry \cite{Henry}, the reversibility condition often is
superfluous. Let us make this precise.

\begin{lem}\label{reversibility} Assume that $\Hy$ satisfies conditions
(\ref{hyp1}--\ref{hyp3}) of \Cref{hyp}. Then:
\begin{enumerate}
\item \label{reversibility-i} $-\hyzero =  \hyzero.$
  \item   \label{reversibility-ii} $ a_1 \in a_2 \boxplus (-a_3) \quad \text{iff} \quad a_3 \in a_2
\boxplus (-a_1).$
 \item \label{reversibility2} If $\Hy$ satisfies the conditions of a hyperring,
except perhaps reversibility (\eqref{hyp4} of \Cref{hyp}), then
 $-a_1 = (-\one)a_1,$ for all $a_1\in \Hy$, and
$(-\one) S= - S:=\{-a: a \in S\}$, for all $S\in \nsets (\Hy)$,
and the map  $S\mapsto - S$ is a
 negation map on~$\overline{\Hy}$ or $\nsets (\Hy)$
(for the multiplicative action of $\Hy$). %and $\Hy$ is a hyperring.
\end{enumerate}
\end{lem}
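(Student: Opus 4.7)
\medskip
\noindent\textbf{Proof plan.}
For (i), I would simply use the neutral property of $\hyzero$: the identity $\hyzero\boxplus(-\hyzero)=\{-\hyzero\}$ combined with property \eqref{hyp3} (which says $\hyzero\in \hyzero\boxplus(-\hyzero)$) forces $-\hyzero=\hyzero$. Along the way I would also record the double negation identity $-(-a)=a$: the element $-(-a)$ is characterized as the unique element of $\Hy$ whose hypersum with $-a$ contains $\hyzero$; but commutativity gives $\hyzero\in a\boxplus(-a)=(-a)\boxplus a$, so uniqueness yields $-(-a)=a$.

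For (ii), the plan is to chain two applications of reversibility through the double negation. Starting from $a_1\in a_2\boxplus(-a_3)$, commutativity rewrites it as $a_1\in(-a_3)\boxplus a_2$, then reversibility \eqref{hyp4} gives $a_2\in a_1\boxplus(-(-a_3))=a_1\boxplus a_3$, and a second use of reversibility delivers $a_3\in a_2\boxplus(-a_1)$. The reverse implication is completely symmetric. No new ideas are needed beyond \eqref{hyp4} and $-(-a)=a$.

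For (iii), the crux is the identity $-a=(-\one)a$. I would derive it from single distributivity: since $\hyzero\in\one\boxplus(-\one)$ and $\hyzero=a\hyzero$, the single distributivity law yields
\[
\hyzero=a\hyzero\in a\cdot(\one\boxplus(-\one))=a\boxplus(a(-\one)),
\]
so uniqueness of the hypernegative forces $a(-\one)=-a$; the analogous computation on the other side gives $(-\one)a=-a$. Then elementwise multiplication immediately gives $(-\one)S=\{(-\one)s:s\in S\}=-S$ for every $S\in\nsets(\Hy)$.

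It remains to verify the three axioms of \Cref{negmap} for the map $S\mapsto -S$ on $\nsets(\Hy)$ and its restriction to $\overline{\Hy}$. Order $\le 2$ is $-(-S)=S$, from $-(-a)=a$ applied elementwise. The additivity $-(S_1\boxplus S_2)=(-S_1)\boxplus(-S_2)$ follows by tracing an element $-s\in-(S_1\boxplus S_2)$ with $s\in s_1\boxplus s_2$, applying single distributivity to $(-\one)(s_1\boxplus s_2)=(-s_1)\boxplus(-s_2)$, and noting the reverse inclusion by the same argument run backward. Compatibility with the $\Hy$-action, namely $(-a)S=-(aS)=a(-S)$, reduces to $(-a)S=((-\one)a)S=(-\one)(aS)=-(aS)$ and $a(-S)=a((-\one)S)=(a(-\one))S=((-\one)a)S$ via associativity of multiplication, both equal to $-(aS)$. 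The main thing to watch is that all of the above arguments use \emph{only} single distributivity, never double distributivity or reversibility, so they apply in the stated generality. The only mildly subtle point is making sure the action and the multiplication of subsets are consistent on $\overline\Hy$ vs.\ $\nsets(\Hy)$, but this is automatic from the elementwise definition \eqref{hypmul}.
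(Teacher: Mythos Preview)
Your argument for part~(ii) has a genuine gap: you invoke reversibility \eqref{hyp4} twice, but the hypotheses of the lemma are only conditions \eqref{hyp1}--\eqref{hyp3} of \Cref{hyp}. Reversibility is \emph{not} assumed, and indeed the whole point of this item (as becomes clear in \Cref{rev1} just afterward) is to establish a reversibility-type statement from \eqref{hyp1}--\eqref{hyp3} alone. Your chain ``$a_1\in(-a_3)\boxplus a_2 \Rightarrow a_2\in a_1\boxplus a_3$'' is precisely an application of \eqref{hyp4}, so the argument is circular.

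The paper's proof of (ii) avoids this by going through associativity and unique hypernegatives directly: from $a_1\in a_2\boxplus(-a_3)$ one has
\[
\hyzero\in(-a_1)\boxplus a_1\subseteq(-a_1)\boxplus\bigl(a_2\boxplus(-a_3)\bigr)=\bigl((-a_1)\boxplus a_2\bigr)\boxplus(-a_3)=\bigcup_{s\in(-a_1)\boxplus a_2}\bigl(s\boxplus(-a_3)\bigr),
\]
so there is some $s\in(-a_1)\boxplus a_2$ with $\hyzero\in s\boxplus(-a_3)$; uniqueness of hypernegatives then forces $s=a_3$, giving $a_3\in a_2\boxplus(-a_1)$. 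This is the missing idea you need.

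Your treatments of (i) and (iii) are fine and essentially match the paper's (including the auxiliary observation $-(-a)=a$, which the paper leaves implicit).
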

\begin{proof} \eqref{reversibility-i} $\hyzero\boxplus \hyzero =\{ \hyzero\},$ implying $-\hyzero =  \hyzero,$ by \eqref{hyp2} of \Cref{hyp}.

\eqref{reversibility-ii}  Suppose  $ a_1 \in a_2 \boxplus (-a_3) $. Then   $$\hyzero \in
(-a_1)\boxplus  a_1 \subset (-a_1)\boxplus  (a_2 \boxplus (-a_3)) = ((-a_1)
\boxplus a_2) \boxplus (-a_3) = \cup_{s \in (-a_1)\boxplus a_2} ( s
\boxplus (-a_3)).$$ But $\hyzero \in s \boxplus (-a_3)$ iff  $s=a_3$,
by unique hypernegatives, hence $a_3 \in (-a_1)\boxplus a_2 = a_2\boxplus (-a_1).$
Exchanging $a_1$ and $a_3$, we deduce the equivalence
$ a_1 \in a_2 \boxplus (-a_3) $ if and only if $a_3 \in a_2\boxplus (-a_1)$.

\eqref{reversibility2} From single distributivity,  we have
$a_1\boxplus  (-\one)a_1 = (\one\boxplus (-\one))a_1,$ which contains $\hyzero a_1 = \hyzero,$ since $0$ is absorbing.
So $ (-\one)a_1 = -a_1$ by unique hypernegation.
This implies that  $(-\one) S=\{-a: a \in S\}=-S$ for all $S\in {\nsets }(\Hy)$.
Using single distributivity again, we have
$$(-\one)(a_1\boxplus a_2) = ((-\one)a_1)\boxplus ((-\one)a_2) . $$
Hence, for all $S_1, S_2\in {\nsets }(\Hy)$,
we have $ (-\one ) (S_1\boxplus S_2)=\{(-\one) a: a\in a_1\boxplus a_2,\;
a_1\in S_1,\; a_2 \in S_2\}= \{a: a\in ((-\one)a_1)\boxplus ((-\one)a_2) ,\;
a_1\in S_1,\; a_2 \in S_2\}=
% \{ a_1\boxplus a_2 : a_1\in (-\one) S_1,\; a_2 \in (-\one) S_2\}=
((-\one ) S_1)\boxplus ((-\one ) S_2)$.
This implies that  $S\mapsto (-\one ) S$ defines an additive isomorphism on
 ${\nsets }(\Hy)$ or $\overline{\Hy}$. Since it is compatible with
the multiplicative action of~$\Hy$, we also get a negation map.
\end{proof}

\begin{proposition}[\protect{cf.~\cite{M}, \cite[Theorem~3A]{Vi}}]\label{rev1}  Assume that $\Hy$ satisfies conditions
(\ref{hyp1}--\ref{hyp3}) of \Cref{hyp}.
The reversibility condition \eqref{hyp4} of \Cref{hyp}
is equivalent to  the map  $S\mapsto - S$ being an additive isomorphism on
 ${\nsets }(\Hy)$ or $\overline{\Hy}$.
% negation map on~$\overline{\Hy}$ or $\nsets (\Hy)$.
 % $``-"$ being a negation map.
\end{proposition}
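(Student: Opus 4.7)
The plan is to reduce the equivalence to singletons (showing $-(a \boxplus b) = (-a) \boxplus (-b)$ for all $a, b \in \Hy$), since both properties then extend elementwise to $\nsets(\Hy)$ and to $\overline{\Hy}$ by the definition of the set-theoretic hypersum. Throughout, commutativity of $\boxplus$, associativity, and unique hypernegation (axioms (\ref{hyp1})--(\ref{hyp3})) are available, along with $-(-x) = x$ (from uniqueness).

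For the implication \emph{reversibility $\Rightarrow$ additive morphism}, given $c \in a \boxplus b$ I would apply axiom (\ref{hyp4}) three times in succession. First, (\ref{hyp4}) gives $b \in c \boxplus (-a)$; reapplied with the new roles, it yields $-a \in b \boxplus (-c)$; and a third application produces $-c \in (-a) \boxplus (-b)$. This establishes the inclusion $-(a \boxplus b) \subseteq (-a) \boxplus (-b)$; the reverse inclusion follows by substituting $(a,b,c) \mapsto (-a,-b,-c)$ and invoking $-(-x) = x$.

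For the converse, \emph{additive morphism $\Rightarrow$ reversibility}, assume $c \in a \boxplus b$ and let me prove $b \in c \boxplus (-a)$. By additive morphism, $-c \in (-a) \boxplus (-b)$. The key trick is to insert this into the relation $\hyzero \in c \boxplus (-c)$ furnished by unique hypernegation, and to reassociate:
\[
\hyzero \in c \boxplus \{-c\} \subseteq c \boxplus \bigl((-a) \boxplus (-b)\bigr) = \bigl(c \boxplus (-a)\bigr) \boxplus (-b).
\]
Hence $\hyzero \in s \boxplus (-b)$ for some $s \in c \boxplus (-a)$, and uniqueness of hypernegatives (axiom (\ref{hyp3})) forces $s = b$, giving $b \in c \boxplus (-a)$ as required. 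The reverse implication in (\ref{hyp4}) is proved symmetrically, starting from $a_3 \in a_1 \boxplus (-a_2)$ and $\hyzero \in a_3 \boxplus (-a_3)$, to recover $a_1 \in a_2 \boxplus a_3$.

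The main obstacle is the converse direction: purely formal bookkeeping is insufficient, since as involutions on the ternary relation $\{(a,b,c) : c \in a \boxplus b\}$, the ``negate all arguments'' symmetry (additive morphism) and the ``swap first two arguments'' symmetry (commutativity) commute and together generate only a Klein four-group of symmetries, which does not contain the involution corresponding to reversibility. What breaks this deadlock is the combined use of \emph{associativity} (to re-bracket a three-fold hypersum) and \emph{uniqueness of hypernegation} (to single out the intermediate term as $b$), which is exactly the content of the computation above.
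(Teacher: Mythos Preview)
Your proof is correct and follows essentially the same approach as the paper. The forward direction (three successive applications of reversibility to pass from $c\in a\boxplus b$ to $-c\in(-a)\boxplus(-b)$) matches the paper exactly; for the converse, the paper first invokes the already-established Lemma~\ref{reversibility}\eqref{reversibility-ii} (whose proof is precisely your associativity-plus-uniqueness computation) and then applies the negation map, whereas you apply the negation first and redo that lemma's argument inline---the same two ingredients in the opposite order.
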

\begin{proof} $(\Rightarrow)$ Assume reversibility.
We make three respective exchanges
 $$a_3\in (a_1 \boxplus a_2) \Equiv a_1\in a_3 \boxplus (-a_2)
\Equiv -a_2\in a_1 \boxplus (-a_3) \Equiv (-a_3)\in (-a_1) \boxplus
(-a_2) .$$ (We exchanged $a_1$ and $a_3,$ \ $a_1$ and $ -a_2$, \
and $-a_3$ and $-a_2$, respectively.)
Hence, for all $S_1, S_2\in {\nsets }(\Hy)$,
we have
$ - (S_1\boxplus S_2)=\{- a: a \in S_1\boxplus S_2\;
 \}= \{a: a\in (-a_1)\boxplus (-a_2) ,\;
a_1\in S_1,\; a_2 \in S_2\}=
% \{ a_1\boxplus a_2 : a_1\in (-\one) S_1,\; a_2 \in (-\one) S_2\}=
(- S_1)\boxplus (-S_2)$.
This implies that  $S\mapsto - S$ is an additive isomorphism on
 ${\nsets }(\Hy)$ or $\overline{\Hy}$.

$(\Leftarrow)$
Assume that $S\mapsto - S$ is an additive isomorphism on
 ${\nsets }(\Hy)$ or $\overline{\Hy}$. Taking $-a_3$ instead of $a_3$ in
\Cref{reversibility2} of \Cref{reversibility}, we get $ a_1 \in a_2 \boxplus a_3$ iff $-a_3 \in a_2 \boxplus (-a_1)$.
Negating the last expression, %on the right side
this is equivalent to $a_3 \in -( a_2 \boxplus (-a_1))=
(-a_2)\boxplus a_1= a_1\boxplus (-a_2)$.
\end{proof}

%\todo[inline]{MA: since we only have an addition (hypergroup),
%a  negation map is not defined, so I replaced negation map by additive
%morphism. LR I added a sentence at the end. MA: I changed it a little.}
The latter result shows in particular that the negation of a hyperring is a negation map iff the reversibility condition holds, and \Cref{reversibility2} of \Cref{reversibility} shows that both conditions follow from the other properties of a hyperring.
\begin{rem}
    Any multiplicative map $f:\mfrak{Hp}\to \mfrak{Hp}'$
    extends  to a bibimodule multiplicative map $ \nsets (\Hy)\to \nsets (\Hy')$, given by $f(S) = \{ f(s): s\in S\},$
    which restricts
    to a  bimodule multiplicative map    $f:\overline{\mfrak{Hp}}\to \overline{\mfrak{Hp}'}$.
    We make this identification without further mention.
\end{rem}

\begin{definition} We have two versions of hyperring categories:
\begin{enumerate}
\item The objects of the category $\mfrak{Hp}$ are hyperrings, and the morphisms are module multiplicative maps $f: \Hy \to \Hy'$ which satisfy
  $f(  \boxplus a_i) \subseteq \boxplus' f(a_i)$.

\item The category $\mfrak{wHp}$ has the same objects, hyperrings, but now with ``weak morphisms'' being bimodule multiplicative maps
$f: \Hy \to \Hy'$, satisfying $\zero\in  \boxplus _i a_i$ implies $\zero\in  \boxplus' _i f(a_i)$.

\end{enumerate}
\end{definition}

Hyperrings or hyperfields provide a special case
of systems via the following more precise statement of
\cite[Theorem~3.21]{Row21}:

 \begin{theorem}[\protect{\cite[Theorem~3.21]{Row21}}]\label{hypersys}$ $ Let   $(\MHModT,\preceq)$ denote the category of nd-semiring
 systems of hyper-type, whose morphisms are $\preceq$-morphisms,
 $(\MHSrT,\preceq)$   denote the full subcategory of $(\MHModT,\preceq)$ of semiring
 systems of hyper-type, and  $(\FHSrT,\preceq)$  denote the full subcategory of $(\MHModT,\preceq)$ for which $\tT =\mcA^\times.$
\begin{enumerate}
 \item \label{hypersys-i}   To every hyperring $\Hy$ is associated a system
$ ( \overline{\Hy} , \tT, (-),\preceq)$,
 where $\overline{\Hy}$ is the sub-monoid
of $ \nsets (\Hy)$ generated by $\Hy$
   for the additive law $\boxplus$, $\tT = \Hy \setminus \{ \hyzero \},$ identified as
 singletons,    $\zero$ is $\{\hyzero\}$,
   $(-)$ is the hypernegative $-$,
 $\tT$ acts on %$\nsets (\Hy)$ or
$\overline{\Hy}$
via $a S = \{ as: s \in S\}$, and  $\preceq$ is the set inclusion.
%(This also works for hyperrings.)
Moreover,  we have a faithful functor
   $\Psi:\mfrak{Hp}\to (\MHModT,\preceq_{\operatorname{hyp}})$ sending the hyperring
 $\Hy $ to $( \overline{\Hy} , \tT, (-), \subseteq)$, and hyperring morphisms to $\preceq$-morphisms.

 % \item The analog of (i) holds for hyperrings, whose category (whose morphisms are weak morphisms which also preserve multiplication) is a  subcategory of $\mfrak{Hp}.$  % called $\mfrak{HpBS}.$ ?
 \item \label{hypersys-ii} $ \nsets(\Hy) $ satisfies the following partial version of double distributivity:
$$\left({\sum}_i^{\boxplus}a_i \right)\left({\sum}_j^{\boxplus} a_j' \right)\subseteq {\sum}_{i,j}^{\boxplus} a_i a_j' \quad \forall a_i, a_j'\in \Hy .$$

 \item \label{hypersys-iii} When the hyperring
    $\Hy$ is doubly distributive,
 $ ( \overline{\Hy} , \tT, (-), \preceq)$ is a semiring system.
%where  $\preceq$ is set inclusion.
In  this case the multiplication in
$\overline{\Hy} $ defined in \eqref{mult-doub}% or \Cref{dc}
coincides
    with
    \eqref{hypmul}, so the full subcategory of $\mfrak{Hp}$ whose objects are doubly distributive hyperrings is identified with a full  subcategory of  $(\MHSrT,\preceq_{\operatorname{hyp}})$.
      \item    The full subcategory of $\mfrak{Hp}$ whose objects are doubly distributive hyperfields is identified with a full  subcategory of~$(\FHSrT,\preceq_{\operatorname{hyp}})$.
     \item \label{hypersys-iv} Even when a hyperring $\Hy$ is not doubly distributive,
     it also gives rise to a semiring structure on    $ \nsets{\Hy}, $ by using the multiplication
\begin{equation}\label{mult-doub}
 \left( {\sum}_i^{\boxplus} a_i \right) \odot\left({\sum}_j^{\boxplus} a'_j\right) := {\sum}_{i,j}^{\boxplus} a_i a'_j\enspace ,
\end{equation}  as in
\cite[Theorem~2.45]{Row21} (also see \Cref{dc} ).
     The process also can be viewed as a faithful functor from  $\mfrak{Hp}$ to  $(\MHSrT,\preceq_{\operatorname{hyp}})$ \footnote{There is a subtle difficulty here: Given a hyperfield $\Hy$, $\overline{\Hy}$ need not be closed under the original multiplication, whereas the nd-semiring generated inside $ \nsets(\Hy)$ need not be $\Hy$-gen,
     so strictly speaking may not be a system. Thus we extend the multiplication on $\Hy$ to a different  multiplication on $\overline{\Hy}$}.
\end{enumerate}
 \end{theorem}
\begin{proof}
As in \cite[Theorem~3.21]{Row21}.
(i) We have to check that morphisms are preserved. Given a morphism $f: \Hy \to \Hy',$ we   extend $f$ to
$\nsets (\tT)$ by putting $f(S) = \cup\{f(a) :a\in S\}.$ This is a $\preceq$-morphism since $$f(S_1 \boxplus S_2) = \cup\{f(a_1 \boxplus a_2): a_i\in S_i\} \preceq \cup \{f(a_1)+ f(a_2): a_i\in S_i\} \subseteq f(S_1)+f(S_2).  $$

If $\zero \in S_1 (-) S_2 $, then there are $a_i\in S_i$ such that $\zero \in a_1 (-) a_2,$
so $\Phi (\Hy)$ is a system of hyper-type.

(ii)
Denote $S= {\sum}_j^{\boxplus}a_j'$.
Using  associativity, single distributivity, and then definition of sums of sets,  we have
\[ {\sum}_{i,j}^{\boxplus} a_ia_j'={\sum}_i^{\boxplus}({\sum}_{j}^{\boxplus} a_ia_j')={\sum}_i^{\boxplus}(a_i S)=\bigcup_{b_i\in S}({\sum}_i^{\boxplus}a_i b_i)\enspace . \]
Now using single distributivity again, we have that
\[({\sum}_i^{\boxplus} a_i )S=\bigcup_{b\in S}({\sum}_i^{\boxplus} a_i )b=\bigcup_{b\in S}({\sum}_i^{\boxplus} a_i b)
\subseteq \bigcup_{b_i\in S}({\sum}_i^{\boxplus}a_i b_i)\enspace .\]
%Therefore
%$$ ({\sum}_i^{\boxplus}a_i )({\sum}_j^{\boxplus}a_j' ) = \cup_{a \in {\sum}_i^{\boxplus}a_i}a({\sum}_j^{\boxplus}a_j' ) \subseteq \bigcup_ {a \in {\sum}_i^{\boxplus}a_i}{\sum}_{i,j}^{\boxplus}a_ia_j'  \subseteq {\sum}_{i,j}^{\boxplus} a_ia_j'.$$

(iii) By (ii), noting that all invertible elements of $ ( \overline{\Hy} , \tT, (-), \preceq)$ lie in $\Hy.$

(iv) First we identify $\tTz$ with its image under $\Phi $, since $ \{ a\in\tTz: a\preceq a'\}= \{a'\}$ when $a' \in \tTz.$
If $S_i = \{ a\in\tTz: a\preceq b_i\}$ then $a'S = \{ a' a: a\preceq b\}$, so when $f$ is a $\preceq$-morphism, $$\bar f(S_1+S_2) =  \{ f(a): a \in S_1+S_2\} =\{ f(a_1+a_2): a_i \in S_i\}   \subseteq  \{ f(a_1)+f(a_2): a_i \in S_i\} =\bar f(S_1) +\bar f(S_2) ,$$ proving $\Psi$ sends $\preceq$-morphisms to hyperring morphisms.

Clearly $(-)$ is preserved, since $(-) \{ a\in\tTz: a\preceq b\}=  \{(-) a\in\tTz: a\preceq b\} =  \{ a\in\tTz: a\preceq (-)b\}.$

If  $b_1 \preceq b_2$ then  $\{ a\in\tTz: a\preceq b_1\} \subseteq \{ a\in\tTz: a\preceq b_2\} ,$
by transitivity.
Hence $\Psi$ is a functor.

 $\Phi \Psi f$ sends
$a$ to $\bar f(\{a\}) = f(a).$

On the other hand, any hyperring morphism of the hyperring $\Hy$ corresponds to a $\preceq$-morphism of~$\Phi(\Hy).$

(v) Any hyperring morphism $f$ satisfies $$\begin{aligned} f\left(\left( {\sum}_i^{\boxplus} a_i \right) \odot\left({\sum}_j^{\boxplus} a'_j\right)\right)& = f\left({\sum}_{i,j}^{\boxplus} a_i a'_j\right) \\& \subseteq {\sum}_{i,j}^{\boxplus} f(a_i a'_j) =
 {\sum}_{i,j}^{\boxplus} f(a_i)f( a'_j) \\ &
=f\left( {\sum}_i^{\boxplus} a_i \right)f\left({\sum}_j^{\boxplus} a'_j\right)   \end{aligned},$$ so $f$ extends to a monoid morphism on $\mcA.$
\end{proof}
%The only possibility to get a system in the hyperring case is to consider the ``trivial'' tangible set $\tT=\{\pm 1\}$ with the trivial action $1 b=b$, $(-1) b=-b$. Then, for a hyperring $\Hy$, we get that  is a system. But this is not exactly what you wrote in \cite[Theorem~3.21]{Row21}. Also, this may be confusing to speak of this kind of hypersystem in the theorem. But we may speak of it after in a remark.}

 \begin{definition}
 We call the system of \Cref{hypersys}~\eqref{hypersys-i}
the \textbf{hypersystem}  of the hyperring
$\Hy$.
\end{definition}

  Most researchers (as in \cite{BB2}) stick with
\eqref{hypmul}, requiring single distributivity but not double
distributivity.

%Also see Footnote 1.

\begin{note}\label{ddnote} $ $\begin{enumerate}
 \item Hypersystems are of hyper-type. Indeed, if $\sum S_i S_i' \succeq \zero$
 then $\zero \in \sum S_i S_i', $ so $\zero \in \sum a_i a_i' $
for suitable $a_i, a_i'\in S_i.$

\item Hypersystems are strong, since a singleton cannot contain a non-singleton.
\item Hypersystems also satisfy a stronger version of unique negation:
  If $b_1 \boxplus b_2 \in \mathcal A_{\Null}$, with
$\mathcal A=\overline{\Hy}$, then        $\hyzero \in b_1 \boxplus
b_2 ;$ consequently, for $b_1 \in \tT$ we get $(-)b_1 = b_2.$

\end{enumerate}
\end{note}

As observed by Krasner~\cite{krasner},
there is a natural way to construct some
important examples of hyperrings, see also
\cite[Proposition~2.6]{CC}, where applications of this construction
were given.  A similar construction
appeared in~\cite[\S 4, Examples~(ii)]{DrW}, in the setting
of fuzzy rings.
%% Applications of this construction appeared in
%% observed by Connes and Consani, (and hinted in Dress and Wenzel
%% \cite[\S 4, Examples(ii)]{DrW} not using the terminology
%% ``hyperring''),
\begin{proposition}[{\cite{krasner}}]\label{prop-CC}
 Suppose $R$ is an associative ring
 having a normal multiplicative subgroup~$G$ (by which we mean $aG = Ga$ for all $a\in R$).
 Then the set of cosets $\mathcal H:=R/G=\{ [a] = aG: a\in R\}$,
 equipped  with the multivalued addition
$$[ a] \boxplus [{a'}] = \{ [{x+x'}] : x\in aG,\ x' \in
 a'G\},$$
 and the multiplication inherited from $R$,
 is a hyperring. In particular, $\mathcal H$
 is a hyperfield if $R$ is a field.\end{proposition}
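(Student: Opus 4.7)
The plan is to verify the axioms of a hyperring directly from the coset structure, exploiting the fact that $G$ acts on every coset by bijection. First I would note that the multiplication is well-defined on cosets because $G$ is a multiplicative subgroup: if $[a]=[a']$ and $[b]=[b']$, then $a'=ag$, $b'=bh$ for $g,h\in G$, hence $a'b'=ab(gh)\in abG$. Similarly, the multivalued sum $[a]\boxplus[a']$ depends only on the cosets by construction, is non-empty since it contains $[a+a']$, and is manifestly a subset of $\mathcal H$. Commutativity of $\boxplus$ is inherited from $R$, and $\{0\}=[0]$ acts as a neutral absorbing element for both operations.

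Next I would verify associativity of $\boxplus$ by showing both iterated sums coincide with the symmetric expression
\[
\{[x_1+x_2+x_3] : x_i \in a_iG\}\enspace .
\]
The only subtle point is that an element $[y]\in [a_1]\boxplus[a_2]$ has a representative $y=(x_1+x_2)g$ for some $g\in G$, and then $y=(x_1g)+(x_2g)$ with $x_ig\in a_iG$; this ``$G$-translation trick'' reduces both triple sums to the symmetric form and makes associativity transparent.

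Then I would establish unique hypernegation, which I expect to be the main conceptual step. The candidate hypernegative of $[a]$ is $[-a]$, and $[0]\in [a]\boxplus[-a]$ since $0=a+(-a)$. For uniqueness, suppose $[0]\in [a]\boxplus [b]$; then there exist $g,h\in G$ with $ag+bh=0$, so $a=-b(hg^{-1})\in -bG$, giving $[b]=[-a]$. From this I would read off reversibility via the same coset-translation device: if $[a_1]\in [a_2]\boxplus[a_3]$, pick $x_i\in a_iG$ with $[a_1]=[x_2+x_3]$, and then $x_3=(x_2+x_3)+(-x_2)$ exhibits $[a_3]=[x_3]\in [a_1]\boxplus[-a_2]$. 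Alternatively one could invoke \Cref{rev1} once the additive and negation structure on $\overline{\mathcal H}$ is in place.

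Finally I would verify (single) distributivity by observing that for any $a\in R$, multiplication by $a$ sends the coset $a_iG$ onto the coset $aa_iG$, whence
\[
[a]\cdot([a_1]\boxplus[a_2]) = \{[ax_1+ax_2]:x_i\in a_iG\} = [aa_1]\boxplus [aa_2] = [a][a_1]\boxplus [a][a_2]\enspace .
\]
This completes the hyperring axioms. For the hyperfield addendum, if $R$ is a field and $G\subseteq R^\times$, then for $a\neq 0$ the coset $[a]$ is contained in $R^\times$, and the inherited multiplication on $\mathcal H\setminus\{[0]\}$ inherits from $R^\times$ the structure of an abelian group (with identity $[1]$ and inverse $[a^{-1}]$), so $\mathcal H$ is a hyperfield. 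The key obstacle throughout is to keep track of representatives versus cosets, but the systematic use of $G$-translations (rescaling representatives by elements of $G$) handles each axiom cleanly.
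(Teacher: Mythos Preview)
Your verification is correct and complete: well-definedness, associativity via the $G$-translation trick, unique hypernegation, reversibility, and single distributivity are all checked cleanly, and the hyperfield addendum follows as you say. The paper itself does not supply a proof of this proposition; it is stated with a citation to Krasner and then used as a black box, so there is nothing to compare your argument against beyond noting that your direct axiom-by-axiom check is the standard route.
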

\begin{remark}
The hyperrings and hyperfields $R/G$ arising in this way may or may not be
doubly distributive, seen as  follows.
The failure of the double distributivity
  property can be seen by considering the set
$$\begin{aligned}([a_0]\boxplus [a_1]\,)&([a_0']\boxplus [a_1']) =\\
& = \{[(a_0 g_1+a_1
g_2)(a_0'g_3+a_1'g_4)] : \quad g_i \in G\}\\ &  = \{[\, a_0a_0'
g_1g_3+a_1a_0'g_2g_3+a_0a_1'g_1g_4+a_1a_1'g_2g_4\, ] : \quad  g_i \in G\}
  \end{aligned}$$
  and comparing it with the one $\{[\, a_0a_0'g'_1+a_1a_0'g'_2+a_0a_1'g'_3+a_1a_1'g'_4\, ] : \quad g'_i\in G\}$  obtained by applying
  the double distributivity axiom.
  The first set is smaller in general, as it is obtained by imposing the
  restriction $g'_1g'_4 = g'_2g'_3$.
 \end{remark}
  \begin{remark}
%   \begin{remark} $ $
Generalizing \Cref{prop-CC}, given a multiplicative monoid congruence
$\equiv$ on $R$, we can view
 $R/\equiv$ as a  hyperring by putting $$  \boxplus \left[a_i\right] = \left\{ \left[\sum _i b_i\right] : b_i \equiv a_i \right\}.$$
(Then  $-[a] = [-1][a]=[-a]$ is the unique hypernegative of $[a]$.) This generalizes \Cref{prop-CC} since any normal subgroup $G$ of a   monoid defines a monoid congruence
 by $a \equiv b$ when $aG = bG.$
\end{remark}

%%  $a\boxplus b

%%  We define $\tT$ to be the set
%% of cosets $\{ \bar a = aG: a \in R\}$, with addition
%% $$\overline{ a} \boxplus \overline{a'} = \{ x+x' : x\in aG,\ x' \in
%% a'G\}.$$
%% \end{proposition}
 Even though the flavor of the hypersystem is quite different from $\preceq_\circ$-systems, it is not so easy
  to find an example of a hypersystem  not of $\circ$-type, i.e., with $
 \mathcal A_{\Null}\neq \mathcal A^\circ $.
 \Crefrange{ex-krasner}{ex-viroc} are all of $\circ$-type.
 We next show several instances not of $\circ$-type.

\begin{example}
%  \footnote{
Consider Viro's multigroup $M=\{0,1,2\}$ of  \cite[\S 3.5]{Vi},
in which $1\boxplus 1=2$, $1\boxplus 2=2\boxplus 1=\{0,1\}$ and
$2\boxplus 2=\{1,2\}$, and let $\mathcal A=\nsets (M)$.
This is an example in which $ \mathcal A_{\Null}\neq \mathcal A^\circ $, since
$ \{ 0,2 \}\in  \mathcal A_{\Null}$ but $ \{ 0,2 \}\notin \mathcal A^\circ$.
However, the map $S\mapsto -S$
 is not an additive isomorphism, since $-1 = 2$, implying $(- 1)\boxplus 1
 = \{ 0,1 \} \ne  \{ 0,2 \}= \{ 0, - 1 \}$,
 whereas $1\boxplus ( - 1) = \{ 0,1 \}$, so
$-(1\boxplus ( - 1))= \{ 0,-1 \}$.
Therefore, $M$  is not a hyperring (by \Cref{rev1}).
Note also, that the additive monoid generated by~$M$ does
not contain $ \{ 0,2 \}$, and is not preserved by the $S\mapsto -S$
map.
\end{example}
\begin{example}
In %Viro's
the triangle hyperfield $\Hy=[0,\infty)$  of \cite[\S
 5.1]{Vi}, $a\boxplus b=[|a-b|,a+b]$,
so  $\mathcal A=\overline{\Hy}$,
the monoid generated by $\Hy$ for the additive law $\boxplus$,
 is the set of closed intervals $[a_1,a_2]$,
the hypernegation is the identity map, and
$\mathcal A_{\Null}=\mathcal A^\circ$ since
 $[0,a] = (\frac a 2)^\circ.$ But we can modify the
 latter to obtain formally our desired example.

% \begin{examp*}\label
 Take Viro's triangle hyperfield $\Hy$  of  \cite[\S
 5.1]{Vi}, but with $\Hy \setminus \{ \hyzero \} = \Int^+$, the set of positive natural numbers,
 instead of $\mathbb R^+$.
This is a sub-hyperring of the previous one, and it gives rise
to a hypersystem on $\mathcal A=\overline{\Hy}$, which is a
subsystem of the previous one. The elements of $\mathcal A=\overline{\Hy}$
are the closed intervals $[a_1,a_2]$, with $a_1,a_2\in \Hy$ and either $a_1=0$ or $a_2-a_1$ even.
The elements of $\mathcal A^\circ$ are the interval $[0,2a]$ with $a\in  \Hy$,
since $a^\circ=[0,a]^\circ=[0,2a]$.
Then $1\boxplus 1 \boxplus  1 = [0,3]$ is in $\mathcal A_{\Null}$ but it is not in $\mathcal A^\circ$ since $3$ is odd.
 %\end{examp*}
\end{example} % }
%\todo[inline]{MA: I have replaced the footnote by an example}

 \begin{example}
In the set-up of \Cref{prop-CC},   taking  $R = \mathbb{Z}$ and $G$ containing the element  $-1,$
%\todo[inline]{MA: it is like if at some point, we wanted to consider $G$ more general, since we construct $\widetilde{G}$, but at the end we only take this example. what is the interest of  $\widetilde{G}$ otherwise ?
% Do we need to take $G $ any subgroup containing $\{ \pm 1\}$ ?
%Also the following sentence is bizarre since we have the previous example also.}
  leads us to the interesting arithmetical question of comparing  $
 \mathcal A_{\Null} $ and $\mathcal A^\circ .$
Consider $\Hy=R/G$, and the associated hypersystem $(\mathcal A=\overline{\Hy}, \tT := \Hy \setminus \{0\}  , (-),\subseteq)$ as in \Cref{hypersys}.
Write $[b]$ for the class $bG$, for $b\in R$ (as in \Cref{prop-CC}).
%We have the hypersystem $(\mathcal A=\overline{\Hy}, \tT := \Hy \setminus \{0\}
% , (-),\subseteq)$ where $\mathcal A=\overline{\Hy}$   is the additive subgroup of the power set of~$\tT$ generated by $\tT$,
We have $-[a] = [a]$ since $-1\in G$. Let $\widetilde G = \{1 +g: g\in G\}$. Then $\tT^\circ  = \{ \{ [a +ag]: g\in G \} : [a] \in \tT\} = \{ \{ [a\tilde g]: \tilde g\in \widetilde G\} : %[a\tilde G ] :
 [a] \in \tT\}.$ (For example, if $G = \{\pm 1\}$ then $\widetilde G =
 \{ 0,2\}$ and $\tT^\circ = \{ \{[0], [2a]\}: [a]\in \tT\}.$)

 Recall that $\mathcal A_{\Null}$   consists of subsets of $\tTz$ containing~$0$. Let $b\in \mathcal A$, and suppose that $0 \in b$. Then there are $a_i \in R$ with $b= \{[\sum_{i=1}^{t} a_ig_i]: g_i\ \in G\}$ and $0 = \sum_{i=1}^t  a_ig_i $ for suitable $g_i\in G,$ so we can replace $a_i$ by $a_ig_i$ and have
   $ a_1+\dots + a_{t-1}+ a_t = 0,$ and thus
 $a_t = -a_1-\dots - a_{t-1},$ and then $$b=  \left\{\left[\sum_{i=1}^t a_i g_i \right] : g_i \in G\right\}=  \left\{\left[\sum_{i=1}^{t-1} a_i(g_i-g_t) \right]  : g_i \in G \right\}.
 %=  \left\{\sum_{i=1}^{t-1} \pm a_i(g_i-g_t)   : g_i \in G\right\}.
 $$
Clearly $b \subseteq   [a_1]^\circ \boxplus \dots \boxplus  [a_{t-1}]^\circ$, since $a_i(g_i-g_t) = a_ig_i-a_ig_t \in [a_i]^\circ $. But the other direction fails,
as we show now.

Let
$$b = [1]+[3]+[4] = \{[0], [2], [6], [ 8]\},$$
which is in $\mathcal A_{\Null}$.
If $b = \sum^\boxplus [a_i]^\circ$ for $a_i \in \mathbb{N},$ then  $\sum_i a_i = 4$ because the maximal element contained in an equivalence class belonging to $b$ is $\sum_i 2a_i$. Moreover, every  sum $S$
of a nonempty subcollection of $a_j$ must belong to  $\{1,3,4\}$ because $[a_i]^\circ=\{[0],[2a_i]\}$
and so the class $[2S]$ must belong to $b$.
So the only candidates left to verify are
%$[2a_i]$ is included in $b$. But $$
%[1]^\circ +[2]^\circ = \{0, [2], [4],[6]\},
%  \qquad
%[1]^\circ +[1]^\circ+[1]^\circ+[1]^\circ = \{[2i]: i=0,\dots,
%\qquad
$$ [4]^\circ = \{[0],[8]\},\qquad  [1]^\circ +[3]^\circ = \{[0], [2], [4], [6],  [8]\}.$$
  %\qquad [2]^\circ +[3]^\circ = \{[0], [2], [4], [6],  [10]\},$$
 % \todo{show that other cases with $3$ or $4$ $a_i$ dont work}
  Thus $b\notin A^\circ.$

  We can modify the previous example so that it holds in a hyperfield  by taking $ R=\mathbb{Z}_p$ (for $p>23$) instead of $ \mathbb{Z}$.
\end{example}

%\todo[inline]{MA: I do not understand, we were already in the case of a hyperfield.}
%% see n This provides both positive and negative instances  of double
%% distributivity, including those of \Cref{ddnote}.

\subsection{Examples of hyperfields and their triples and hypersystems}

We next discuss basic examples of hyperfields taken from \cite{krasner,CC,Vi} and relate their
corresponding hypersystems to several extensions or variants of
the tropical semifield which have appeared in the literature.

% will be recurring in this note:
\begin{example}\label{ex-krasner}
  The \textbf{Krasner hyperfield} $\mathcal K$ is the set $ \Hy := \{ 0, 1 \}$, with the usual multiplication
law, and hyperaddition defined by  $ x \boxplus  0 = 0 \boxplus  x =
x $ for all $x ,$ and $1 \boxplus  1 =  \{ 0, 1\}$
\cite{krasner}.
It coincides with the quotient hyperfield $K/K^\times$ of \Cref{prop-CC} for any
field $K$ of order $>2$.

The hypersystem of $\mathcal K$
  is isomorphic to the supertropical algebra
  $\{ \hyzero, \one, \one^\nu\},$
which is of the first kind, with $\tT =\{ \one\}$, seen by sending $
 0  \to \hyzero$,  $ 1 \to \one $, $ \{ 0, 1\} \to
\one^\nu$.
\end{example}
\begin{example}
  The \textbf{hyperfield of signs} $\mathcal S$
  is the set $ \Hy := \{ 1 , 0, -1\}$,
  with the usual multiplication
law, and hyperaddition defined by $1 \boxplus  1 = 1 ,$\ $-1
\boxplus  -1 = -1  ,$\ $ x \boxplus  0 = 0 \boxplus  x = x $ for all
$x ,$ and $1 \boxplus  -1 = -1 \boxplus  1 = \{ 0, 1,-1\} $
\cite{CC,Vi,BL2}.
It coincides with the quotient hyperfield $K/K_{>0}$ for every linearly
ordered field $(K,\leq )$,
where $K_{>0}$ is the group of positive elements of $K$.

The  triple of $(\mathcal S, \{(\pm)\one\},(-))$ $\mathcal S$ is isomorphic to the triple $\{L, \{\pm 1\}, -)$ of the sign semiring, defined as follows:
\begin{enumerate}
    \item $L = \{0,1,-1,\infty\}$, which we recall has the usual
multiplication law,  where $0\cdot\infty=1\cdot\infty =\infty,$
and with addition defined by $1 +1 = 1 ,$\ $-1 + (,-1) = -1  ,$\ $ x
+ 0 = 0+ x = x $ for all $x ,$   $1 + ( -1) = \infty ,$ and $\infty$ is
an additively absorbing element.

 \item The natural negation map is the natural negation  $(-)1 = -1$.

 \item $\{L, \{\pm 1\}, -)$   is of the second kind with $\tT = \{\pm
1\}$ and $L^\circ = \{0, \infty = 1 ^\circ\}$
\end{enumerate}
The    isomorphism from~$(\mathcal S, \{(\pm)\one\},(-))$ to $\{L, \{\pm 1\}, -)$,
  sends $ \{ 0, 1,-1\} $ to $ \infty$.

$(\mathcal S, \{(\pm)\one\},(-))$ is also
isomorphic to the symmetrization, defined as in
\Cref{semidir39}, of the Boolean
semifield $\{ \hyzero, \one\}$ (with $\one + \one = \one$), seen by
sending $1 \mapsto (\one, \hyzero)$, $-1\mapsto ( \hyzero, \one)$
and $ \{ 0, 1,-1\} $ to $(\one,\one)$ (see also~\cite{AGG2}).
\end{example}
\begin{example}\label{ex-phase}
  The \textbf{phase hyperfield}. Let $S^1$ denote the complex
  unit circle, and take $\Hy = S^1\cup \{ 0 \}$.
  Nonzero points $a$ and $b$ are \textbf{antipodes} if $a = -b.$
Multiplication is defined as usual (so corresponds on $S^1$ to
addition of angles). We denote  an open arc of less than 180 degrees
connecting two distinct points $a,b$ of the unit circle by
$\myarc{a\,b}$. The hypersum is given, for $a,b\neq 0$, by
$$a \boxplus b=
\begin{cases} \myarc{a\,b} \text{ if } a \ne b \text{ and } a\neq -b\enspace ;\\
  \{ -a,0,a \} \text{
if } a = -b \enspace , \\   \{  a \} \text{ if }   a = b\enspace .
\end{cases}$$
The phase hyperfield coincides with the quotient hyperfield
$\C/\R_{>0}$. It is not doubly distributive. The hypersystem
$\overline{\Hy}$ contains only the points of $\Hy$, the elements of
the form $a\boxplus b$ with $a,b\in S^1$,
 and subsets  $C$ of $\Hy$,
where either $C$ is an open half circle connecting an element $a\in
S^1$ to $-a$ (which is obtained as the sum $a\boxplus b\boxplus
(-a)$, where $b$ is inside the open arc  $C$), or $C=\Hy$ (which is obtained as $a\boxplus b\boxplus (-a)\boxplus (-b)$).
%\todo{SG: added $C=S^1$}
For example, $\{ \pm 1 \} \not\in \overline{\Hy}$.

The phase hyperfield is not doubly distributive. One can consider however
the hypersystem of $\Hy$, that is
$(\overline{\Hy},\tT=\Hy\setminus\{0\}, (-),\preceq)$, with the multiplication $\odot$ defined
in~\eqref{mult-doub}, which makes it a semiring system. This
semiring $\overline{\Hy}$ is isomorphic to the phase
semiring defined in~\cite[Ex.\ 2.22]{AGG2}. Indeed, this semiring consists in the set
$\mathcal C$ of closed convex cones of $\C$ (which are either the
  singleton $\{0\}$, the closed angular sectors
  between two half-lines with angle less or equal to $\pi$, the
  lines passing through $0$, or the set $\C$), in which
  the addition of two elements $C_1$ and $C_2$
  is the convex hull of $C_1\cup C_2$, and their multiplication
    is the convex   hull of the set of products of elements
  in $C_1$ and $C_2$.
  The isomorphism between $\Hy$ and $\C$ is obtained by the bijective map
  $i:\overline{\Hy}\to {\mathcal C},\;  p\mapsto c$,
  where, for any element $p$ of $\overline{\Hy}$, $i(p)$ is
  the closed convex cone generated by $p$, and any element $c$ of
  $\mathcal C$,  $i^{-1}(c)$ is the intersection
  of the relative interior of $c$ with $\Hy$.
\end{example}

   Recall that a \textbf{(non-archimedean) valuation} from a ring $\ring $   to  an
ordered monoid $( \hgroup,+,0)$ is   a monoid homomorphism $\val:
(R,\cdot,1) \to \hgroup\cup\{\top\}$ (where $\top$ is a top element  not belonging to $\hgroup$),   satisfying
$\val(0)=\top$, and $$\val(a+b) \ge \min \{ \val(a),
\val(b)\}, \qquad \forall a,b \in R.$$

It is well known that $\val(\pm 1) = 0,$ and if $ \val(a)> \val(b)$   then
$\val(a+b) = \val(b)$.

  For instance, we may take $R= \C[[t^{\hgroup}]]$ the field of
  Hahn series~\cite{higman}, which is the set of formal series  with complex
  coefficients, and a well ordered
support of exponents in an ordered group $({\hgroup},+,0)$, i.e., every Hahn series can be written
  as a formal sum $f=\sum_{\lambda \in \Lambda} f_\lambda t^\lambda$
  where $\Lambda$ is a well ordered subset of $\hgroup$
  and $s_\lambda \in \C$.
The \textbf{valuation} of a series is defined by
\begin{equation}
  \operatorname{val}(f) = \min \{\lambda : f_\lambda \neq 0\}\label{e-def-val}
% _{c_j \neq 0}\{j/N\},
\end{equation}
with the convention that $\operatorname{val}(0) =+\infty$.
%\zero\ (=-\infty).
The operator
$\operatorname{val}$ is often referred to as \textbf{tropicalization}.

When $\hgroup$ is a subgroup of $\R$, one can also consider the
smaller field  $\C\{\{t^{\hgroup}\}\}$ of Hahn series such that
the  set of exponents is a subset of $\hgroup$ which is either
finite, or the set of elements of a sequence converging to
$+\infty$. When $\hgroup=\R$, this is the field of (generalized)
Puiseux series with complex coefficients, which is a popular choice
in tropical geometry.
  If  $\hgroup$ is a  divisible subgroup of $(\R,+)$, then $R$
  is algebraically closed. In particular,  one can choose  ${\hgroup}=(\Q, +)$.
  %% , since $(\Q, +)$ is model
  %% complete in the elementary theory of ordered groups.)
  In this way we obtain the Levi-Civita field $\C\{\{t^\Q\}\}$, which is the Cauchy closure
   of the more familiar field of Puiseux
  series, which is the subfield of $\C\{\{t^\R\}\}$ consisting
  of series whose exponents are rational
  and included in an arithmetic progression.
%%   Then,   Then we have
%% the \textbf{Puiseux valuation} $\operatorname{val} : \C\{\{t\}\}
%% \setminus \{0\} \rightarrow \Q_{\operatorname{max}} \subset
%% \R_{\operatorname{max}}$ defined by

%%   \begin{equation}
%% \operatorname{val}(f) = \min \{_{c_j \neq 0}\{j/N\},
%% \end{equation}
%% and formally $\operatorname{val}(0) = \zero\ (=-\infty).$ We also
%% call  $\operatorname{val}$ \textbf{tropicalization}.

\begin{example}\label{ex-tropicalhyperfield}
  The \textbf{tropical hyperfield}
  consists of the set $\Hy=\R \cup \{-\infty\}$, with $-\infty$
  as the zero element and $0$ as the unit, equipped
  with the addition $a\boxplus b = \{a\}$ if $a>b$,
  $a\boxplus b = \{b\}$ if $a<b$,
  and $a\boxplus a= [-\infty,a]$.
  It coincides with the quotient hyperfield $\kfield/\hgroup$,
  where $\kfield$ denotes a field with a surjective non-archimedean
  valuation $\val: \kfield \to \R\cup\{+\infty\}$, and
  $\hgroup:=\{f\in \kfield: \val f =0\}$, the equivalence
  class of any element $f$ having valuation $a$ being
  identified with the element $-a$ of $\Hy$.

The hypersystem of the tropical hyperfield
is isomorphic to the supertropical system where $\tT=\R$,
seen by sending
$a\mapsto a$, and $[-\infty,a]\mapsto a^\nu$, for $a\in \R \cup \{-\infty\}$.
It is also isomorphic to the layered extension system
  $L \rtimes \R$ of the ordered monoid $(\R,+,0,\leq)$ with the supertropical algebra
  (or the hypersystem of the Krasner
  hyperfield) without zero
  $L=\{1,1^\nu\}$,
using \Cref{AGGGexmod1} of layered systems below.
   \end{example}
\begin{example}\label{ex-signedtrop}
  The \textbf{signed tropical hyperfield} $\Hy$
  is the union of two disjoint copies of $\R$,
  the first one being identified with $\R$, and denoted by $\R$,
  the second one being denoted by $(-) \R$,
  with a zero element of $\Hy$ adjoined, denoted  $-\infty$.
  We denote by ``$a$'' with $a\in \R$ the elements of the first copy, and by
  ``$(-) a$'' with $a\in \R$ the elements of the second copy $(-) \R$.
We put $(-) \zero=\zero$.
  The addition is defined by
  \[
  a \boxplus b =\{\max(a,b)\},
  \qquad a \boxplus ((-) b) =
  \begin{cases}
    \{a\} & \text{ if }a>b\\
    \{\ominus b\} & \text{ if } a<b\\
    \{\zero\}\cup \{x: x\leq a\}\cup
    \{\ominus x: x\leq a\}
    & \text{ if } a=b \enspace ,
    \end{cases}
  \]
  and $\zero\oplus a=a\oplus \zero=a$, for $a,b\in \R$ and satisfies
  $(\ominus a) \boxplus (\ominus b)=\ominus (a\boxplus b)$.
  The multiplication is obtained by taking the usual addition on $\R$
  and using
  the rule
  of signs, i.e., $ab$ is the usual sum of $a$ and $b$, $(\ominus  a)b=a(\ominus b):=\ominus (ab)$, and $(\ominus a)(\ominus b)=ab$, for all $a,b\in \R\cup\{\zero\}$ and the absorbing property of the zero $\zero a=a\zero=\zero$.

  The signed tropical hyperfield is otherwise known as the ``real tropical numbers,'' in analogy to \Cref{ex-tropicalhyperfield},
  or the ``tropical real hyperfield'', see \cite{Vi}.
  This hyperfield coincides with the quotient hyperfield $\kfield/\hgroup$
  where $\kfield$ is a linearly ordered non-archimedean field with a surjective
  valuation $\val: \kfield \to \R\cup\{+\infty\}$, and
  $$G:=\{f\in \kfield: f>0, \; \val f =0\}.$$
  For instance, we may take for $\kfield$ the set of Hahn series
   $\R[[t^{\R}]]$
  or the set of generalized Puiseux series $\R\{\{t^\R\}\}$,
  with real exponents and real coefficients,
  with the lexicographic order on the sequence of coefficients
  of a series, where lexicographic order is with respect to the
  order of the exponents~\cite{ribenboim_ordered,MR1289092}.

  The hypersystem of the signed tropical hyperfield is isomorphic
  to the symmetrized max-plus semiring, see for instance~\cite{AGG2},
  and it is also isomorphic to the layered extension system
  $L \rtimes \R$ of the ordered monoid $(\R,+,0,\leq)$ with the sign semiring (or the hypersystem of the
  hyperfield of signs) without zero
  $L=\{1,-1,\infty\}$,
using \Cref{AGGGexmod1} of layered systems below.
\end{example}

\begin{example}\label{ex-phasealt}
  A variant of the phase hyperfield, which we call
  here the \textbf{weak phase hyperfield},
  can be obtained by taking
  the quotient $\kfield/\hgroup$, where $\kfield=\C\{\{t^{\R}\}\}$, and $\hgroup$
  is the group of (generalized) Puiseux series with  positive real
  leading coefficient, where the leading coefficient is the coefficient
  $f_\lambda$ of the series $f=\sum_{\lambda \in \Lambda} f_\lambda t^\lambda$
  such that $\lambda$ is the minimal element of
  $\{\lambda \in \Lambda : f_\lambda \neq 0\}$.
  The non-zero elements of $\kfield/\hgroup$ can still
  be represented by elements of the unit circle $S^1$. However,
  the hyper-addition differs, for $a,b\neq 0$, we now have
  $$a \boxplus b=
  \begin{cases} \myarc{[a\,b]}
    &
    \text{ if } a \ne b \text{ and } a\neq -b ,\\
  S^1 \cup \{0\} & \text{
    if } a = -b
  , \\   \{  a \} & \text{ if }   a = b .
\end{cases}$$
where $\myarc{[a\,b]}$ denotes the {\em closed} arc of the angle
inferior to $180$ degrees joining $a$ and $b$ (compare with the open
arc $\myarc{a\,b}$ in \Cref{ex-phase}).
As the phase hyperfield, the weak phase hyperfield is
not doubly distributive, but is $\subseteq$-distributive.
  \end{example}

\begin{example}\label{ex-tropicalcomplex}
  By analogy with the construction of the tropical hyperfield and
  of the signed tropical hyperfield, obtained as quotients
  of non-archimedean fields by groups of elements of valuation $0$,
  we take $L$ to be the \textbf{phased tropical hyperfield}, viewed
  as the quotient $\kfield/\hgroup$, where $\kfield=\C\{\{t^{\R}\}\}$ (as in \Cref{ex-phasealt}),
  and $\hgroup$ is the group of (generalized) Puiseux series with valuation
  $0$ and positive leading coefficient.  A non-zero
  element of $\kfield/\hgroup$ is the coset
  consisting of all series $f= rut^\alpha  + o(t^\alpha)$,
  with $r\in \R_{>0}$, where $u\in S^1$ and $\alpha\in \R$ are fixed,
    and $o(t^\alpha)$ denotes the set of series having valuation smaller than $\alpha$.     So, this element can be uniquely represented by the couple
    $(u,\alpha)\in S^1\times \R$. The multiplication of non-zero elements of
    $\kfield/hgroup$ corresponds to the multiplication on the $S^1$-component,
    and to the addition on the $\R$-component.
    Negation and hyper-addition of non-zero elements of
    $\kfield/hgroup$ is defined as follows: for $a=(u,\alpha)$ and $b=(v,\beta)
    \in S^1\times R$, $-a=(-u,\alpha)$ and
%\todo[inline]{LR: I added a line}
\[
a \boxplus b=
\begin{cases}
  \{  a \} & \text{ if }   \alpha  > \beta \text{ or } a =b\enspace ,\\
  \{  b \} & \text{ if }   \alpha  < \beta  ,\\
 \myarc{u\,v}\times \{\alpha\} &  \text{ if } \alpha= \beta \text{ and } u\neq -v\enspace ;\\
  \{ -a,a \} \cup \left(S^1\times (-\infty,\alpha)\right)\cup\{\hyzero\}& \text{ if } a = -b \enspace .
\end{cases}
\]

\end{example}
\begin{example}[Viro's complex hyperfield]\label{ex-viroc}
  %[Imaginary extension of the tropical semiring]
In~\cite{Vi}, Viro considered a variant, which he called the
\textbf{complex hyperfield}. The elements of this hyperfield are the
complex numbers, so that non-zero elements $a$ of this hyperfield
can still be represented as $(u,\alpha)\in S^1\times \R$, by taking
$u=a/|a|$ and $\alpha=\log(|a|)$. Using this representation,
negation and hyper-addition of non-zero elements is defined as
follows: for $a=(u,\alpha)$ and $b=(v,\beta)
    \in S^1\times R$, $-a=(-u,\alpha)$ and
\[
a \boxplus b=
\begin{cases}
  \{  a \} & \text{ if }   \alpha  > \beta \text{ or } a =b\enspace ,\\
  \myarc{[u\,v]}\times \{\alpha\} &  \text{ if } \alpha= \beta \text{ and } u\neq -v\enspace ;\\
 \left(S^1\times (-\infty,\alpha]\right)\cup\{\hyzero\}& \text{ if } a = -b \enspace .
\end{cases}
\]
Consider the field of
Hahn series with exponents in the group $\R\times \R$
with lexicographic order, or equivalently the field
of series of the form $f=\sum_{\lambda \in \Lambda} f_\lambda t^{\lambda_1}(\log\, t)^{\lambda_2}$,  where $\Lambda$ is a well ordered subset of $\R\times \R$ and
and $f_\lambda \in \C$. The valuation of a non-zero series is
an element of $\R\times \R$ and has thus two coordinates $\alpha_1,\alpha_2$.
Then,  the complex hyperfield of Viro
coincides with the quotient hyperfield $\kfield/\hgroup$,
where $G$ is the group of series
with  positive leading coefficient and first coordinate of its valuation
equal to $0$. Using the representation as series in $t$ and $\log\, t$,
a non-zero element of $\kfield/\hgroup$ is the coset
  consisting of all series $f= rut^{\alpha_1}(\log\, t)^{\alpha_2}  + o(t^{\alpha_1}(\log\, t)^{\alpha_2})$,
  with $r\in \R_{>0}$ and $\alpha_2\in \R$,
  where $u\in S^1$ and $\alpha_1\in \R$ are fixed,
  and $o(t^{\alpha_1}(\log\, t)^{\alpha_2})$ denotes the set of series of valuation smaller than $(\alpha_1,\alpha_2)$.
Again, this hyperfield is not doubly distributive.
\end{example}
%%   The extension $\mathcal A_{\operatorname{im}} $ of
%% $\mathcal A$ by $S^1$ is just $(S^1, \mathcal A)$, with
%% %\todo[inline]{Since still $(e ^{i\theta},\tT)$ is not defined,
%% %I changed $\tT_{\mathcal A_{\operatorname{im}}} = \cup (e ^{i\theta},\tT)$
%% %into  $S_1\rtimes \tT$}
%% $\tT_{\mathcal A_{\operatorname{im}}} = S_1\times \tT$, with usual
%% complex multiplication and where $(-)e ^{i\theta} = e
%% ^{i(\theta+\pi)}$. (Complex addition is not relevant.)

%\todo[inline]{MA: I do not understand what is $(e ^{i\theta}, \mathcal A)$,
%since to define $(L,\mathcal A)$ one need that $L$ is a multiplicative monoid,
%which is not the case for $L=\{e^{i\theta}\}$.
%Do you mean rather that $\mathcal A_{\operatorname{im}} = (L,\mathcal A)$
%with $L$ the unit circle ?
%Also why choosing $(-)e ^{i\theta} = e ^{-i\theta}$ and not
%$(-)e ^{i\theta} = - e ^{i\theta}=e^{i(\theta+\pi)}$  ?
%In the latter case $(L,\mathcal A)$ is similar to the complex extension discussed in Viro~\cite{Vi} and the phase extension presented in~\cite[Example 2.22]{AGG2}
%except that sums are not convexified and $\tT$ may be more general here (see above).
%Note that this is this extension which is informally used in tropical
%geometry (by Mikhalkin in particular) not the complex extension described
%in~\cite[Example 2.13]{AGG2}}

%\todo{???I removed  subsection More about balancing since I don't see where it is used. I hope that was not a mistake - I put it at the bottom if we need it}\todo{LR Good, I agree}

\subsection{Obtaining hypersystems  through balancing}$ $

% This discussion is for readers interested in finer points of hyperfields.
Conversely to \Cref{hypersys}, given a system
$(\mathcal A,  \tT, (-), \preceq)$, we would like $\mathcal A$ to
act like a power set for which we  identify $\tT$ as the hyperring
of its singletons. The difficulty is in determining the set defining
the sum of tangible elements.
%% \todo[inline]{what is naive?}
%% The naive attempt would be $a_1
%% \boxplus a_2 = \{ a \in \tT: a \preceq a_1 + a_2 \},$ but then
%% always $\zero \in  a_1 \boxplus a_2,$ contradicting the uniqueness
%% of the hypernegative.
\cite[Theorem~7.4]{Row21} provides a
partial converse to \Cref{hypersys} for metatangible systems;
we aim for a more general construction, based on ``balancing''.
Recall that for a triple $(\mathcal A,  \tT, (-))$, given a $\tT$-sub-bimodule $\mathcal{I}$ of $\mathcal{A}$,
containing $\mathcal A^\circ$, %and stable under negation,
the balance relation was defined by $b_1\nabla_{\mathcal I} b_2$
if $b_1(-)b_2\in \mathcal I$, see \Cref{def-tripbal}.
Moreover, we write $\nabla$ instead of $\nabla_{\mcA_{\Null}}$.
%% \begin{definition}
%%   For $\mathcal I \triangleleft \mathcal A$, an element  $b_1$ \textbf{$\mathcal I$-balances}
%%   $b_2$ in $\mathcal A$, written $b_1 \nabla_{\mathcal I}\, b_2$,   if  $b_1 (-) b_2 \in \mathcal I$.
%% \end{definition}

%% Note that $b_1 \nabla_{\mathcal I}\, b_2$,   iff $b_2
%% \nabla_{\mathcal I}\, b_1$.

%\todo[inline]{\Cref{nab4} put before since it was for $\mathcal I$ general}

\begin{definition}
%Elements $b_1,b_2\in \mathcal A$ are \textbf{balanced} if $b_1\nabla b_2$.
 %In this case we also use the notation from \cite{AGG2} that $b_1\nabla b_2$, and  say that $b_1$
%  \textbf{balances} $b_2.$ (This is different from the use of $\nabla$ in
%\cite{IzhakianRowen2008Matrices2}.)

Two elements $b_1,b_2$ of $(\mathcal A, \tT, (-))$ are \textbf{tangibly balanced} if there is $a\in \tTz$ such that $b_i \nabla a$ for $i=1,2.$

The triple $(\mathcal A, \tT, (-))$ is \textbf{tangibly balanced} if any balanced pair
of elements
is tangibly balanced.

%$\mathcal A$ is \textbf{tangibly balanced} if $\mathcal A$ is
%$k$-tangibly balanced for all $k$.

Conversely,   $\mathcal A$ has \textbf{tangible balance elimination}
if tangibly balanced implies balanced, i.e., for any $a \in \tTz$,
$b_i \in \mathcal A$, $ b_1   \nabla  a$ and $a \nabla b_2$ implies
$b_1 \nabla b_2$.

$\mathcal A$ has \textbf{left $\nabla$-inversion} if
$ab \nabla a_1$
for $a,a_1\in \tTz$ and $b \in \mathcal A$ implies there is $b' \in \tTz$
such that $ab' = a_1$ and $b \nabla b'$. \textbf{Right $\nabla$-inversion} is defined analogously.
 \textbf{$\nabla$-inversion} means left and right $\nabla$-inversion.
\end{definition}

\begin{remark}\label{nabl} \  \begin{enumerate}
   \item Any two elements surpassing $-\infty$ are tangibly
 balanced, seen by taking $a = \zero.$

  \item Tangible balance elimination is a weak form of
transitivity of $\nabla.$
\end{enumerate}\end{remark}

\begin{lem}\label{flf}
For any two tangible elements $a_1,a_2$ of a metatangible triple, there is $a_3\in \tTz$ such that $a_1+a_2(-)a_3 \in {\mathcal A}_{\Null}.$
\end{lem}
\begin{proof}
If $a_1+a_2 \in {\mathcal A}_{\Null},$ take $a =\zero.$
If $a_1+a_2 \in \tT,$ take $a = (-)(a_1+a_2).$
\end{proof}

The condition of \cref{flf} is analogous to ``field-like'' for fuzzy rings,
given in \cite{GJL}.

\begin{lem}\label{ugly} Assume $\mathcal A$ is tangibly balanced. We have
 \begin{enumerate}
\item \label{ugly-i} If   $b\in \mathcal A$ is tangibly balanced with   $a\in\tTz$, then
$a\nabla b$.
  \item  \label{ugly-ii}
% $\mathcal A$ tangibly balanced implies the following condition:
 If $a\in \tTz$ with   $a \nabla (b+c)$, for $b,c\in \mathcal A$, then there exists $a' \in \tTz$ balancing $b$, with $a \nabla (a'+c)$.
 \item  \label{ugly-iii} Left and right $\nabla$-inversion holds whenever $\tT$ is a group.
\end{enumerate}\end{lem}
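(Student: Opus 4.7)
The plan is to leverage the tangible balancing hypothesis, which asserts that any balanced pair $b_1 \nabla b_2$ admits a common balancer $a\in \tTz$, together with the fact that $\mathcal A_\Null$ is a $\tT$-submodule containing $\mathcal A^\circ$. For \eqref{ugly-i}, I would apply tangible balancing to the reflexive pair $(a,a)$: since $a(-)a = a^\circ$ lies in $\mathcal A^\circ \subseteq \mathcal A_\Null$, we have $a \nabla a$, so a common tangible balancer $b \in \tTz$ exists, giving $a \nabla b$.

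For \eqref{ugly-ii}, the first step is to rewrite the hypothesis $(b+c) \nabla a$ in the equivalent form $b \nabla (a(-)c)$ using \Cref{nab4}\eqref{nab4-ii}. Tangible balancing then produces $a' \in \tTz$ with both $a' \nabla b$ and $a' \nabla (a(-)c)$. Applying \Cref{nab4}\eqref{nab4-ii} once more to the second relation converts it into $(a'+c)\nabla a$, i.e., $a \nabla (a'+c)$, which is exactly what is needed. For \eqref{ugly-iii}, I would first observe that because $\mathcal A_\Null$ is a $\tT$-submodule, the relation $\nabla$ is preserved by the $\tT$-action: $x\nabla y$ implies $cx \nabla cy$ for every $c \in \tTz$. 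Assume now $ab \nabla a_1$ with $a,a_1 \in \tTz$, $b \in \mathcal A$. If $a \in \tT$, multiply on the left by the group inverse $a^{-1}\in \tT$ to obtain $b \nabla a^{-1}a_1$, and set $b' := a^{-1}a_1 \in \tTz$; then $ab' = a_1$ and $b \nabla b'$. If $a=\zero$, then $\zero \nabla a_1$, so $a_1 \in \mathcal A_\Null \cap \tTz = \{\zero\}$ by the unique quasi-negation axiom of a system, and one may take any $b' \in \tTz$ with $b \nabla b'$ supplied by part \eqref{ugly-i}: such a $b'$ automatically satisfies $ab' = \zero = a_1$. Right $\nabla$-inversion follows by the symmetric argument, using the $\tT$-bimodule structure.

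The main obstacle is part \eqref{ugly-ii}: one must carefully translate the balance $a \nabla (b+c)$ into a form to which tangible balancing can be applied, and then translate the output back; this is the only step where the ``move to the other side'' property of \Cref{nab4}\eqref{nab4-ii} is essential. Parts \eqref{ugly-i} and \eqref{ugly-iii} follow in a direct way from, respectively, the reflexivity of $\nabla$ on $\mathcal A$ and the stability of $\mathcal A_\Null$ under the $\tT$-action.
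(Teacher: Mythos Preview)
Your proof is correct and follows essentially the same route as the paper: part~\eqref{ugly-i} via the reflexive pair $(a,a)$, part~\eqref{ugly-ii} by shuffling $c$ across using \Cref{nab4}\eqref{nab4-ii}, applying tangible balancing, and shuffling back, and part~\eqref{ugly-iii} by multiplying through by $a^{-1}$ when $a\in\tT$ and invoking part~\eqref{ugly-i} when $a=\zero$. The only cosmetic difference is that for the case $a=\zero$ in \eqref{ugly-iii} you invoke unique quasi-negation over $\mathcal A_{\Null}$ to conclude $a_1=\zero$, whereas the paper uses the surpassing relation axiom directly; both are valid.
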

\begin{proof}
\eqref{ugly-i} If $a\nabla a'$ for $a'\in \tTz$ then $a = a'$.

\eqref{ugly-ii}  If
    $a\nabla (b+c)$ then $(a(-)c) \nabla b$, implying $a'\nabla (a(-)c)$ as well as  $a' \nabla b$, for some $a' \in
    \tTz$, and we get $a \nabla (a'+c)$.

 \eqref{ugly-iii} Let $a,a_1\in \tTz$ and $b \in \mathcal A$ such that $ab \nabla a_1$.

If $a\neq \zero$, then $b' = a^{-1}a_1\in \tTz$ satisfies
$a_1 = ab'$ and $b  (-) b' =     a^{-1}(ab (-) a_1) \in {\mathcal A}_{\Null}.$

If $a=\zero$, then $a_1\in \tTz$ and $a_1\nabla \zero$,
implying $a_1=\zero$, so the assertion is vacuous.
\end{proof}

\begin{proposition}\label{tangbal} $\mathcal A$ is both tangibly balanced and satisfies tangible balance elimination under either of the
following conditions:
\begin{enumerate}
   \item \label{tangbal-i}  $\mathcal A$ is  shallow with unique negation
   (thus including the  supertropical situation and \Cref{semidir39}), and $\preceq$ coincides with
   $\preceq_\zero$.
 \item  \label{tangbal-ii} $\mathcal A$ is a hypersystem.
   \end{enumerate}
\end{proposition}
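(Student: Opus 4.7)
The plan is to treat each hypothesis separately, since they exploit very different structure. Under (i), the null module is $\mathcal{A}_{\Null}=\mathcal{A}^\circ$ because $\preceq=\preceq_\circ$, and shallowness forces every element of $\mathcal{A}$ to be either tangible (in $\tTz$) or a quasi-zero (in $\tT^\circ$). Under (ii), the balance relation on a hypersystem admits a very concrete description (non-empty intersection of subsets of $\Hy$), after which both properties become essentially tautological.

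For case (i), to verify tangibly balanced I would start from $b_1(-)b_2\in\mathcal{A}^\circ$ and split on which of $b_1,b_2$ is tangible. If $b_1\in\tTz$, setting $a=b_1$ works: $a(-)b_1=b_1^\circ\in\mathcal{A}^\circ$ and $a(-)b_2=b_1(-)b_2\in\mathcal{A}^\circ$ by hypothesis; symmetrically if $b_2\in\tTz$. If neither is tangible then both $b_1,b_2\in\tT^\circ\subseteq\mathcal{A}^\circ$, and $a=\zero$ works because $\mathcal{A}^\circ$ is stable under $(-)$, so $\zero(-)b_i\in\mathcal{A}^\circ$. For tangible balance elimination, given $a\in\tTz$ with $b_1\nabla a$ and $a\nabla b_2$, a parallel case split closes it: if $a=\zero$, then $b_1$ and $(-)b_2$ lie in $\mathcal{A}^\circ$, hence so does $b_1(-)b_2$; if $a\in\tT$ and $b_1\in\tT$ (or $b_2\in\tT$), unique negation applied to $b_1(-)a\in\mathcal{A}^\circ$ forces $b_1=a$ (respectively $a=b_2$), so $a\nabla b_2$ becomes $b_1\nabla b_2$; otherwise both $b_1,b_2\in\tT^\circ\subseteq\mathcal{A}^\circ$, and closure of $\mathcal{A}^\circ$ under addition and $(-)$ again gives $b_1(-)b_2\in\mathcal{A}^\circ$.

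For case (ii), I would begin by observing that $\mathcal{A}_{\Null}$ consists of those $b\in\overline{\Hy}$ containing $\hyzero$, so $b_1\nabla b_2$ means $\hyzero\in b_1\boxplus(-b_2)$. Since $\hyzero\in x\boxplus(-y)$ holds (by unique hypernegation) if and only if $x=y$, this translates to the simple condition $b_1\cap b_2\neq\emptyset$ as subsets of $\Hy$. Tangibly balanced then follows by choosing any $x\in b_1\cap b_2$ and setting $a=\{x\}$, which is in $\tT$ if $x\neq\hyzero$ and equals $\zero$ otherwise; in either case $a\cap b_i\neq\emptyset$, i.e., $a\nabla b_i$. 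Tangible balance elimination is equally immediate: if $a\in\tTz$ is a singleton $\{x\}$ meeting both $b_1$ and $b_2$, then $x$ must lie in $b_1\cap b_2$, yielding $b_1\nabla b_2$.

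No step looks genuinely difficult; the only non-trivial point is the translation $b_1\nabla b_2\Leftrightarrow b_1\cap b_2\neq\emptyset$ in case (ii), which hinges on uniqueness of hypernegatives and on singletons being identified with tangible elements. Everything else is a small case analysis combined with the submodule properties of $\mathcal{A}^\circ$.
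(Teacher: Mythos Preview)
Your proof is correct and follows essentially the same approach as the paper's own argument: in case~(i) you use the shallowness decomposition $\mathcal{A}=\tTz\cup\tT^\circ$ together with unique negation, and in case~(ii) you translate $b_1\nabla b_2$ into $b_1\cap b_2\neq\emptyset$ via unique hypernegatives, exactly as the paper does (though the paper leaves that translation implicit). Your write-up is somewhat more detailed, particularly in handling the sub-case $a=\zero$ explicitly in part~(i), but the underlying ideas coincide.
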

\begin{proof} \eqref{tangbal-i} In this case $ {\mathcal A}_{\Null} = \mathcal A^\circ.$ Suppose $b_1 (-) b_2 \in \mathcal A^\circ.$ The
requirement for tangibly balanced is clear if either $b_i \in \tTz$,
so we may assume that $b_1 , b_2 \in \mathcal A^\circ,$ in which
case we just take $a = \zero.$

For tangible balance elimination, suppose $b_i (-) a  \in {\mathcal A}_{\Null}$
for $i=1,2.$ If either $b_i \in \tT$, then $a = b_i$ by unique
negation, so again the assertion is immediate.

\eqref{tangbal-ii} Suppose $b_1\nabla b_2$,  meaning that $\zero \in b_1 (-) b_2 .$ By hypothesis there is
tangible $a \in b_i$ for $i=1,2,$ implying $a \nabla b_1$ and $a
\nabla b_1,$  proving $\mathcal A$ is tangibly balanced.

For tangible balance elimination, suppose $ b_i \nabla a  $ for
$i=1,2.$ Thus $a \in b_1$ and $-a \in - b_2$, implying $\zero
\preceq a (-)a \subseteq b_1 (-)b_2.$
\end{proof}

Thus a counterexample to tangibly balanced cannot be supertropical,
symmetrized, or a hypersystem. Nevertheless, one can obtain such
counterexamples when the conditions do not hold.

\begin{example}\label{someex} $ $\begin{enumerate}
  \item \label{someex-i} (Essentially \cite[Example~4.1]{GJL}) If $a\notin \mathcal A_\Null$   and does not balance any tangible element, then we
  have a counterexample, by \eqref{ugly-i} of \Cref{ugly}. For example, let $\mathcal A = \mathbb Z ,$
  viewed as a hyperfield under the usual addition, and usual negation $-$, and let $\tT = \{ \pm 1 \}$. Then $2 \nabla
  2,$ but $2$ does not balance $\pm 1,$
  for the inclusion surpassing relation. $(\mathcal A, \tT, -,\subset)$ is a semiring system but not a hyperfield  system.

\item  \label{someex-ii} One can get a counterexample to tangible balance elimination by starting with $ \Net  [\Lambda],$ the
polynomial semiring in a commuting set of indeterminates $\Lambda$
over $ \Net $ (viewed either classically or as a max-plus algebra),
and imposing relations which provide some balancing, such as
$$\lambda_1 + \lambda_2  = \lambda_1 + \lambda_3= \lambda_4 (-) \lambda_4.$$ Then
$ \lambda_2 \nabla \lambda_1 \nabla  \lambda_3, $ but $\lambda_2  $
does not balance $\lambda_3$ because of degree considerations in the
$\lambda_i$.

 \item \label{someex-iii}Following
 \cite[Example 4.2]{GJL}, in the set-up and notation of \Cref{prop-CC}, we take $H: = R/G$ with $G =\{\pm 1\}$.
 Tangible balancing holds if we take the inclusion surpassing relation, because this is a hypersystem.
 However, one can take $\mathcal I \supset  \mathcal A_{\Null}$ such that tangible balancing fails with respect to  $\mathcal I$. For example, take  $R$ to be the finite field $F_{11}$;
 we can view $ H $ as the classes  $\{ 0,1,2,3,4,5\},$  with
 $H^\times =   \{  1,2,3,4,5\}, (-))$.
 When we take $$\mathcal I = \mathcal A_{\Null} \cup \{\text {subsets  of order} \ge 4  \},$$  tangible balancing fails for $b_1 = 1+1 = \{ 0,2\}$ and $b_2 = 2+3 = \{1,5\},$
 since $b_1+b_2 = \{ 1,3, 4,5\} \in \mathcal I  $ whereas the only way for $a +b_1\in \mathcal I $ is for $a = 0$
 or $a=2$ and neither choice has $a+ b_2\in \mathcal I .$
 Notice that if $b_1
 $ and $b_2$ each are sums of two or more elements from $\tT$, then
 $b_1 + b_2 \in \mathcal I,$ from which it follows that $\mathcal A$ still satisfies
 $\mathcal I $-balance elimination.
\end{enumerate}
\end{example}

 %Related examples are given in \Cref{semidir287b}. To obtain hyper-addition, we require the
%property:
%\medskip

% \textbf{Balancing property.}
%Every sum $ a_1+a_2$ of two tangible elements is  balanced by
%  some tangible element.
%\medskip
%\begin{lem}\label{rev121} The balancing property implies that every element
%of $ \mathcal A$
%is  balanced by
 % some tangible element.
%\end{lem}\begin{proof} We need to show
%that any $b =\sum_{i=1}^t a_i$, for $a_i\in t,$
%is  balanced by
%  some tangible element.
%  by induction, $b'= \sum_{i=1}^{t-1} a_i $ is  balanced by
%  some tangible element $a$. But by the balancing property,
%  $a +a_t$ is balanced by
 % some tangible element $a'$,
  %so $a' (-)a_t\, \nabla \, a,$
  %and by balance elimination,
%   $a' (-)a_t \nabla b',$ i.e.,
%   $a'\, \nabla \, (b' +a_t) =b.$
%\end{proof}

In analogy to \Cref{hyp}~\eqref{hyp1}, we identify elements of $\mcA$ to singletons, and for subsets $S_1,S_2$ of $\mcA$ we define
$S_1 + S_2 = \{ s_1 + s_2  : s_ i \in S_i\} = \cup_{s \in S_1}\{ s
+ S_2\}.$ For a subset $S$ of $\mathcal A$ we write
$a \nabla S$ if $a \nabla b$ for some $b \in S.$

Also,  we can define the new operation $\boxplus_{\nabla}$ on the set of subsets of $\tTz$ given, for all $S_i\subseteq \tTz$, by

   \begin{equation}\label{hypadd} \mathop{\boxplus_{\nabla}}_{i=1\;}^{t\quad} S_i
= \left\{a  \in \tTz :\ a \nabla \sum_{i=1}^t S_i
\right\}.\end{equation}

If $\mathcal A$ is tangibly balanced, then any element $a$ of $\mathcal A$ balances an element of $\tTz$, see \Cref{ugly}, and therefore the above set is nonempty.
Moreover, since any system is uniquely negated, we get that $a\boxplus_{\nabla} \zero=a$, for all $a\in \tTz$,
where again singletons are identified to their unique element.

For $t =2,$ $a_1 \boxplus_{\nabla}\, a_2 = \{c  \in
\tTz :  c \nabla (a_1 + a_2)   \},$ a condition we came across in \Cref{flf}.
%
%Define $S_1 \widetilde{\boxplus} S_2:= \cup_{s_1\in S_1,s_2\in
%S_2}.$ s_1\widetilde{\boxplus} s_2$, this used in the next
%proposition}

\begin{proposition}\label{closed}
 Any    system $(\mathcal A,  \tT, (-),\preceq)$
satisfying tangible balance elimination also satisfies $$(b_1
\boxplus_{\nabla} b_2)  \boxplus_{\nabla} b_3\subseteq \,
b_1\boxplus_{\nabla} b_2 \boxplus_{\nabla} b_3, \quad \forall b_i\in \tTz\enspace . $$
\end{proposition}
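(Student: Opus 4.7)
The plan is to unfold both sides of the inclusion using the definition of $\boxplus_\nabla$ in \eqref{hypadd} and the set-wise convention for $\nabla$, and then invoke tangible balance elimination exactly once, with a tangible element obtained from the inner $\boxplus_\nabla$.

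First I would take an arbitrary $a \in (b_1\boxplus_\nabla b_2)\boxplus_\nabla b_3$. By definition of $\boxplus_\nabla$ applied to the set $b_1\boxplus_\nabla b_2\subseteq \tTz$ and the element $b_3$, this means that $a\in \tTz$ and there exists $c\in b_1\boxplus_\nabla b_2$ (so in particular $c\in\tTz$) such that $a\,\nabla\,(c+b_3)$. Since $c\in b_1\boxplus_\nabla b_2$, we also have $c\,\nabla\,(b_1+b_2)$.

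Next I would convert the relation $a\,\nabla\,(c+b_3)$ into a statement that places the tangible element $c$ in the ``middle''. By \Cref{nab4}\eqref{nab4-ii}, $a\,\nabla\,(c+b_3)$ is equivalent to $(a(-)b_3)\,\nabla\,c$. Combined with $(b_1+b_2)\,\nabla\,c$ (using symmetry of $\nabla$), we have two elements of $\mathcal A$ each balancing the same tangible element $c\in \tTz$. This is exactly the configuration to which tangible balance elimination applies, yielding $(b_1+b_2)\,\nabla\,(a(-)b_3)$. Applying \Cref{nab4}\eqref{nab4-ii} once more in reverse form gives $a\,\nabla\,(b_1+b_2+b_3)$, so that $a\in b_1\boxplus_\nabla b_2\boxplus_\nabla b_3$, which is the required inclusion.

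I do not anticipate any serious obstacle: the main content of the argument is already packaged into the tangible balance elimination hypothesis, and the rest is bookkeeping through the symmetry and ``transpose'' properties of $\nabla$ recorded in \Cref{nab4}. The one subtlety worth emphasizing is that the intermediate element produced by the inner $\boxplus_\nabla$ is automatically tangible (it lies in $\tTz$), which is precisely what makes tangible balance elimination applicable; the same argument would fail without this tangibility constraint built into the definition of $\boxplus_\nabla$.
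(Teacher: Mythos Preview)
Your proof is correct and follows essentially the same route as the paper's: unfold the definition of $\boxplus_{\nabla}$, use \Cref{nab4}\eqref{nab4-ii} to rewrite $a\,\nabla\,(c+b_3)$ as $(a(-)b_3)\,\nabla\,c$, apply tangible balance elimination with the tangible intermediary $c\in\tTz$, and convert back. The paper's argument is terser but identical in structure.
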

\begin{proof}
 By definition $b_1 \boxplus_{\nabla} b_2 = \left\{c \in \tTz
 : c \nabla (b_1+b_2)
\right\},$ and
$$(b_1 \boxplus_{\nabla} b_2) \boxplus_{\nabla} b_3 = \{  a  \in \tTz :  c \nabla  (a(-)b_3),
\text{ for some } c\in \tTz  \text{ where } c \nabla (b_1+b_2)\} .$$
But then $(b_1+ b_2) \nabla (a(-)b_3)$, i.e., $a \nabla ((b_1+ b_2)
+ b_3) =  (b_1+ b_2 + b_3) .$
\end{proof}

\begin{proposition}\label{closed2} In a  tangibly balanced
  system $(\mathcal A,  \tT, (-),\preceq)$,
  %and for any partition $ I
  %= I' \cup I'',$
  we have $$ a_1 \boxplus_{\nabla} a_2
\boxplus_{\nabla} a_3 \subseteq  (a_1 \boxplus_{\nabla} a_2)
\boxplus_{\nabla} a_3 , \quad \forall a_i\in \tTz.$$\end{proposition}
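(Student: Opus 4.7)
The plan is to unwind both sides by definition and then apply the tangibly balanced hypothesis (equivalently, \Cref{ugly}~\eqref{ugly-ii}) once. Concretely, an element $a$ of $a_1 \boxplus_{\nabla} a_2 \boxplus_{\nabla} a_3$ is by \eqref{hypadd} just a tangible element with $a \nabla (a_1+a_2+a_3)$, while membership of $a$ in $(a_1 \boxplus_{\nabla} a_2) \boxplus_{\nabla} a_3$ requires producing a witness $c \in \tTz$ with $c \nabla (a_1+a_2)$ and $a \nabla (c+a_3)$. So the whole proof reduces to: given $a \in \tTz$ balancing $a_1+a_2+a_3$, extract a tangible $c$ balancing both $a_1+a_2$ and $a(-)a_3$.

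First I would rewrite $a \nabla (a_1+a_2+a_3)$ as $(a(-)a_3) \nabla (a_1+a_2)$, using \Cref{nab4}~\eqref{nab4-ii}. This exhibits $(a(-)a_3)$ and $(a_1+a_2)$ as a balanced pair. By the tangibly balanced hypothesis, there exists $c \in \tTz$ that simultaneously balances both, that is, $c \nabla (a_1+a_2)$ and $c \nabla (a(-)a_3)$.

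The first balance says $c \in a_1\boxplus_{\nabla} a_2$. Applying \Cref{nab4}~\eqref{nab4-ii} once more to the second gives $(c+a_3) \nabla a$, i.e.\ $a \nabla (c+a_3)$, so $a \in c \boxplus_{\nabla} a_3 \subseteq (a_1 \boxplus_{\nabla} a_2) \boxplus_{\nabla} a_3$, as required. (Alternatively, one can bypass the rewriting step and invoke \Cref{ugly}~\eqref{ugly-ii} directly with $b=a_1+a_2$ and $c=a_3$ to produce the same witness.)

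There is no real obstacle here; the only subtle point is remembering that the definition of $\boxplus_{\nabla}$ restricts to tangible elements, so one must use tangibly balanced (not just the tautological balance $(a(-)a_3)\nabla (a_1+a_2)$) to secure a tangible intermediate $c$. The symmetric reversibility step via \Cref{nab4}~\eqref{nab4-ii} is the only algebraic manipulation needed.
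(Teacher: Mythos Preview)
Your proof is correct and follows essentially the same approach as the paper's: rewrite $a\nabla(a_1+a_2+a_3)$ as $(a(-)a_3)\nabla(a_1+a_2)$, invoke the tangibly balanced hypothesis to produce a tangible $c$ balancing both sides, and then unwind. Your write-up is in fact a bit cleaner, citing \Cref{nab4}~\eqref{nab4-ii} and \Cref{ugly}~\eqref{ugly-ii} explicitly where the paper leaves these implicit (and has a harmless typo writing $a_1(-)a_3$ for $a(-)a_3$).
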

\begin{proof}   We are given
$ a \nabla (a_1 +a_2 +a_3),$ so $ a_1 (-)a_3 \nabla a_1 +a_2$. By
assumption we have $c\in \tTz$ with $(a_1 (-)a_3) \nabla c$ and $ c
\nabla  (a_1 +a_2)$, so $c \in  a_1 \boxplus_{\nabla} a_2$ and $a
\nabla (a_3 \boxplus_{\nabla}(a_1 \boxplus_{\nabla} a_2))= (a_1
\boxplus_{\nabla} a_2)\boxplus_{\nabla}a_3.$
\end{proof}

\Cref{someex}~\eqref{someex-iii} shows why the tangibly balanced hypothesis  is needed in \Cref{closed2}.

\begin{definition}
    Let $(\TBS,\preceq)$ be  the subcategory of $(\MBT,\preceq)$ whose objects are tangibly balanced systems satisfying tangible balance elimination and $\nabla$-inversion,
with $\preceq$ of hyper-type. We call such systems \textbf{TBS}.
\end{definition}

 \begin{theorem}\label{hyprec} For any   tangibly balanced
 system $(\mathcal A,  \tT, (-),\preceq)$ satisfying tangible balance elimination,  $\tTz$ becomes
 a
 multiring
under $ \boxplus_{\nabla}$ of \eqref{hypadd} and the multiplication of $\tTz$. Furthermore, when
$(\mathcal A,  \tT, (-),\preceq)$ satisfies $\nabla$-inversion,
$\tTz$~becomes
 a
 hyperring.

 Let $\Psi$ be the functor  from  $(\TBS,\preceq)$ to $\mfrak{Hp}$, sending  $(\mathcal A,  \tT, (-),\preceq)$ to the hyperring $\tTz$, in the above correspondence, and sending a $\preceq$-morphism to a  morphism of hyperrings.
Letting $\Phi$ be the functor from  $\mfrak{Hp}$ to $(\TBS,\preceq)$  given by \Cref{hypersys},
 %$\Psi$ sends $\TBS$ onto $\mfrak{Hp}$,
 then
 $\Psi\Phi$ is the identity on $\mfrak{Hp}$, so $\Phi$ demonstrates $\mfrak{Hp}$ as a full reflective subcategory of $(\TBS,\preceq)$.
\end{theorem}
\begin{proof}  Associativity is by \Cref{closed,closed2}. Namely, $(a_1 \boxplus_{\nabla} a_2)  \sumnabla a_3\subseteq \, a_1 \boxplus_{\nabla} a_2 \sumnabla a_3$
  by \Cref{closed} and $a_1 \sumnabla
  a_2 \boxplus_{\nabla} a_3 \subseteq(a_1 \boxplus_{\nabla}
  a_2) \boxplus_{\nabla} a_3$ by \Cref{closed2}.
  A symmetrical argument shows that
  $a_1 \boxplus_{\nabla}(\boxplus a_2 \boxplus_{\nabla} a_3) =
  a_1\sumnabla a_2\sumnabla a_3$.
  It follows that $a_1 \sumnabla( a_2 \sumnabla a_3) =
(a_1  \sumnabla a_2) \sumnabla a_3$.
%% \boxplus_{\nabla} a_2) \boxplus_{\nabla} a_3$. Analogously, $a_1
%% \boxplus_{\nabla} a_2 \boxplus_{\nabla} a_3 \subseteq a_1 \sumnabla
%% (\boxplus a_2 \boxplus_{\nabla} a_3$, so all of these are equal.

 To obtain unique hypernegatives in $\Hy$,
note that $a_1 \boxplus_{\nabla} a_2 = \{a \in \Hy : a_1 + a_2 (-)a
\in \mathcal A^\circ\}.$ Thus $\zero \in a_1 \boxplus_{\nabla} a_2$
iff $ a_1 + a_2  \in {\mathcal A}_{\Null},$ iff $ a_1 =(-) a_2 $.

 Reversibility is
 by \Cref{reversibility}, but also is seen immediately:  $ a \in b\boxplus_{\nabla}
c$ iff $(b+c)(-)a \in \mathcal A^\circ$, iff $(a(-)b)(-)c \in
\mathcal A^\circ$, iff $c \in a \boxplus_{\nabla}  ((-)b)$.

To verify \eqref{mul} for $\boxplus_{\nabla}$, note that if $a,
a',s_i \in \tT$ and $a' \nabla \sum_{i=1}^t  s_i,$ then $aa'\,
\nabla \, a\sum_{i=1}^t s_i = \sum_{i=1}^t a s_i,$ so $aa' \in
{\boxplus_{\nabla}}_{i=1}^t  (a s_i )$, so $a ({\boxplus_{\nabla}}
_{i=1}^t s_i)\subset {\boxplus_{\nabla}}_{i=1}^t a s_i. $
 Conversely, if $\nabla$-inversion holds, and
 $ a' \nabla \sum_ {i=1}^t  a s_i = a \sum_ {i=1}^t   s_i , $ then writing $a' = a b'$
 with $b' \nabla \sum _{i=1}^t   s_i$ and $b'\in\tTz$,
 we have $b' \in {\boxplus_{\nabla}}_{i=1}^t \,  s_i,$
so $a' \in a ({\boxplus_{\nabla}}_{i=1}^t \, s_i),$ showing
that ${\boxplus_{\nabla}}_{i=1}^t a s_i\subset a ({\boxplus_{\nabla}}
_{i=1}^t s_i)$.

 We have just shown that  every system in  $\TBS$ gives rise to a  hyperring.
   In the other direction, if  $\Hy $ is a hyperring, it gives rise
   by \Cref{hypersys}  to a system $ (\mathcal A = \langle \tT \rangle , \tT, (-), \preceq)$ in $\TBS$, where $\tTz =\Hy $ is identified
 with the singletons of $\mathcal A$, cf., \eqref{hypmul}.

 If $\Hy '$ is the
 hyperring obtained from \Cref{hyprec}, then $(\Hy, \boxplus)$ and
 $(\Hy', \boxplus_{\nabla}\,)$ are identified as hyperrings, since the hyperring additions match, since by definition
(Equation \eqref{hypadd}),
we have, for all $b_1,b_2\in \Hy'$,
$b_1 \boxplus_{\nabla} b_2 = \left\{c \in \Hy'
 : \zero\in (-c)\boxplus  (b_1\boxplus b_2)
\right\}=b_1\boxplus b_2$ .

We need to show that every  $\preceq$-morphism $f$ of nd-semiring systems of hyper-type yields a morphism $ \tilde f$ of hyperrings. Indeed, for sets $S_1,S_2$ we have
$a \nabla(S_1 + S_2) $ if and only if $a \nabla c$ for some $c\nabla (S_1 + S_2) $, if and only if $a \nabla  (S_1 + S_2) $
by tangible balance elimination. But if $a \nabla (a_1+a_2)$
then $f(a) \nabla f(a_1+a_2) \preceq f(a_1)+f(a_2),$
and thus $\tilde f(a) \subseteq  f(a_1)\boxplus f(a_2)$.
Hence
$$ \tilde f ( S_1  \boxplus  S_2 ) = \{ f(a): a \in \tT_0,\  a \nabla(S_1 \boxplus S_2)\}
=  \{ f(a): a \in \tT_0,\  a \nabla(S_1 + S_2)\}\subseteq  f(a_1)\boxplus f(a_2). $$
\end{proof}
% $(\overline{\Hy}, \Hy, (-))$ is a uniquely negated triple, where we take $(-)$ is as in
%  \Cref{mon6}.  Define $S_1 \preceq S_2$ iff $S_1 \subseteq
%  S_2.$ Then $\preceq$  is a surpassing relation, so $(\overline{\Hy}, \Hy, (-)),\preceq $ is
%  a system.

%
%We call this the \textbf{hypersystem} of the hyperring
%$\Hy$.

 \begin{remark}\label{dich0} As a consequence of Theorem~\ref{hyprec}, every TBS is of hyper-type. We would like an equivalence of categories, but the theorem does not tell
 us that the image of~$\Phi$ obtained in
   \Cref{hypersys} is $(\TBS,\preceq)$. We say that a system $ (\mathcal A = \langle \tT \rangle , \tT, (-), \preceq)$ is \textbf{faithfully balanced} if for any $b,b'\in \mathcal{A}$, $\{a \in \tTz: a \nabla b\}=\{a \in \tTz: a \nabla b'\}$ implies $b=b'.$ (One already gets the weaker
   conclusion that $b \nabla b'$ from ``tangibly balanced.'')

   Obviously hyperrings are faithfully balanced since $\{a\}\nabla b$  means $a\in b.$ The restriction of the action of~$\Psi$ to  faithfully balanced members of $\TBS$  yields a 1:1 correspondence with the objects in $\mfrak{Hp}$.

 \end{remark}

\subsection{Obtaining hypersystems  from systems, using $\preceq$}$ $

Alternatively to \eqref{hypadd}, we can define a different candidate.
Define
   \begin{equation}\label{hypadd6} \mathop{{\boxplus_{\preceq}}}_{i=1\;\;}^{t\quad} S_i
= \left\{a  \in \tT :\ a \preceq \sum_{i=1}^t S_i
\right\}.\end{equation}

In particular, for $t =2,$ $a_1 \boxplus_{\preceq}\, a_2 = \{c  \in
\tT :  c \preceq (a_1 + a_2)   \}.$

We shall also define the following $\preceq_{\mathcal I}$ variant of
 \eqref{hypadd6}:
   \begin{equation}\label{hypadd1}
   \mathop{\boxplus_{\mathcal I}}_{i=1\;\;}^{t\quad} S_i
= \left\{a  \in \tT :\ a \preceq_{\mathcal I}  \sum_{i=1}^t S_i
\right\}.\end{equation}

\begin{remark}\label{hyper1}
We shall use  the following conditions:
\begin{enumerate}
  \item \label{hypadd11}
For all $a_0, a_1 \in \tT$ there exists  $a_2 \in \tT$ with  $a_2
\preceq_{\mathcal I} a_0+ a_1 .$ Equivalently, for all $b \in \mathcal A$ there exists  $a_2 \in \tT$ with  $a_2
\preceq_{\mathcal I} b .$

\item \label{hypadd12} If $a ' \preceq a_1+ a_2 +a_3$  for $a', a_i \in \tT,$ then there is an element $a_2' $ in $\tT$, satisfying
     $a_2 ' \preceq_{\mathcal I} a_1+ a_2$ and $a '  \preceq_{\mathcal I}   a_2' +a_3.$

\item \label{hypadd13} (Symmetrically):  If $a ' \preceq_{\mathcal I} a_1+ a_2 +a_3$
for $a', a_i \in \tT,$ then there is an element $a_3' $ in $\tT$,
satisfying
     $a_3 ' \preceq_{\mathcal I} a_2+ a_3$ and $a '  \preceq_{\mathcal I}   a_1 +a_3'.$
\end{enumerate}
 \end{remark}

\begin{theorem}\label{sysassoc} \begin{enumerate}
 \item \label{sysassoc-i} The operation $\boxplus_{\mathcal I}$ of \eqref{hypadd1}
always satisfies  $$(a_1 \boxplus_{\mathcal I} a_2)
\boxplus_{\mathcal I} a_3 \subseteq a_1 \boxplus_{\mathcal I} a_2
\boxplus_{\mathcal I} a_3.$$

 \item \label{sysassoc-ii} The operation $\boxplus_{\mathcal I}$ defines a hyperring
 when \Cref{hyper1} holds.
\end{enumerate}
\end{theorem}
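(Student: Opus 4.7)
For (i), I would simply unwind the definitions. Given $a \in (a_1 \boxplus_{\mathcal I} a_2) \boxplus_{\mathcal I} a_3$, pick $c \in a_1 \boxplus_{\mathcal I} a_2$ with $a \preceq_{\mathcal I} c + a_3$; by definition $c \preceq_{\mathcal I} a_1 + a_2$. Since $\preceq_{\mathcal I}$ is a module pre-order by \Cref{newsys}, adding $a_3$ on both sides yields $c + a_3 \preceq_{\mathcal I} a_1 + a_2 + a_3$, and transitivity delivers $a \preceq_{\mathcal I} a_1 + a_2 + a_3$, so $a$ lies in $a_1 \boxplus_{\mathcal I} a_2 \boxplus_{\mathcal I} a_3$.

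For (ii), I would verify the four axioms of \Cref{hyp} in turn. Commutativity is immediate from commutativity of $+$, and non-emptiness of $a_0 \boxplus_{\mathcal I} a_1$ is precisely condition \eqref{hypadd11} of \Cref{hyper1}. For the zero axiom, $\hyzero \boxplus_{\mathcal I} a = \{c \in \tT : c \preceq_{\mathcal I} a\}$ collapses to $\{a\}$ because $\preceq_{\mathcal I}$ restricted to $\tTz$ is the identity by condition \eqref{surp-5} of \Cref{precedeq07}, which holds once $\tTz$ is uniquely quasi-negated over $\mathcal I$ as in \eqref{heigh1} of \Cref{newsys}. For unique hypernegation, $\hyzero \in a \boxplus_{\mathcal I} a'$ amounts to $a + a' \in \mathcal I$, and the same unique quasi-negation pins down $a' = (-)a$.

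The heart of the matter is associativity, which I would obtain via the chain $(a_1 \boxplus_{\mathcal I} a_2) \boxplus_{\mathcal I} a_3 = a_1 \boxplus_{\mathcal I} a_2 \boxplus_{\mathcal I} a_3 = a_1 \boxplus_{\mathcal I} (a_2 \boxplus_{\mathcal I} a_3)$. Part (i), applied to both parenthesizations, gives the containments of the outer terms in the ternary sum. The reverse inclusions are exactly what \eqref{hypadd12} and \eqref{hypadd13} of \Cref{hyper1} are designed to supply: given tangible $a$ with $a \preceq_{\mathcal I} a_1 + a_2 + a_3$, condition \eqref{hypadd13} produces an intermediate $a_3' \in a_2 \boxplus_{\mathcal I} a_3$ with $a \in a_1 \boxplus_{\mathcal I} a_3'$, while \eqref{hypadd12} yields the symmetric decomposition. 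Reversibility then comes essentially for free via \Cref{rev1}: having verified axioms \eqref{hyp1}--\eqref{hyp3} of \Cref{hyp}, reversibility is equivalent to $S \mapsto -S$ being an additive morphism on $(\tTz, \boxplus_{\mathcal I})$, which is immediate from condition \eqref{surp-2} of \Cref{precedeq07}: it gives $c \preceq_{\mathcal I} a + b \Leftrightarrow (-)c \preceq_{\mathcal I} (-)a + (-)b$, hence $-(a \boxplus_{\mathcal I} b) = (-)a \boxplus_{\mathcal I} (-)b$.

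The main obstacle I foresee is a mild discrepancy in \eqref{hypadd12} of \Cref{hyper1}: its hypothesis is phrased with the system's own $\preceq$ rather than $\preceq_{\mathcal I}$, whereas in the reverse associativity step the available datum is $a \preceq_{\mathcal I} a_1 + a_2 + a_3$. One therefore needs $\preceq_{\mathcal I}$ to imply $\preceq$; this holds automatically whenever $\mathcal I \subseteq {\mathcal A}_{\Null}$, since adding an element of $\mathcal I$ then preserves the original surpassing order, and I would argue this is the intended setting.
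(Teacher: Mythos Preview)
Your argument is correct and follows essentially the same approach as the paper: part (i) by transitivity and the module pre-order property of $\preceq_{\mathcal I}$, associativity via \Cref{hyper1}\eqref{hypadd12}--\eqref{hypadd13} for the reverse containments, unique hypernegation via unique quasi-negation over $\mathcal I$, and reversibility via \Cref{reversibility}/\Cref{rev1}. You are in fact more thorough than the paper (which leaves the proof of (i) and the neutral-element axiom implicit), and your closing remark correctly flags a genuine typo in the hypothesis of \eqref{hypadd12}, where the paper writes $\preceq$ but clearly intends~$\preceq_{\mathcal I}$.
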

 \begin{proof} $(a_1 \boxplus_{\mathcal I} a_2) \boxplus_{\mathcal I} a_3 =
\{ c' \in \tT: c'  \preceq_{\mathcal I} c+a_3, \text{ where } c
\preceq_{\mathcal I} a_1 + a_2$\}.

Similarly,  $a_1 \boxplus_{\mathcal I} (a_2 \boxplus_{\mathcal I}
a_3) = \{ c' \in \tT: c'  \preceq_{\mathcal I} c+a_1, \text{ where }
c \preceq_{\mathcal I} a_2 + a_3$\}.

Condition \eqref{hypadd11} of \Cref{hyper1} says $a_1 \boxplus a_2$ and $a_2 \boxplus a_3$ are
nonempty. Condition \eqref{hypadd12} of \Cref{hyper1}
says $a_1 \boxplus_{\mathcal I} a_2
\boxplus_{\mathcal I} a_3 \subseteq  (a_1 \boxplus_{\mathcal I} a_2)
\boxplus_{\mathcal I} a_3 $,
and likewise Condition \eqref{hypadd13} of \Cref{hyper1} says $a_1 \boxplus_{\mathcal
I} a_2 \boxplus_{\mathcal I} a_3 \subseteq  a_1 \boxplus_{\mathcal
I} (a_2 \boxplus_{\mathcal I} a_3) ,$
so we get the equality $$ (a_1 \boxplus_{\mathcal I} a_2)
\boxplus_{\mathcal I} a_3  = a_1 \boxplus_{\mathcal I} a_2
\boxplus_{\mathcal I} a_3 =  a_1 \boxplus_{\mathcal
I} (a_2 \boxplus_{\mathcal I} a_3).$$

 To obtain unique hypernegatives in $\Hy$,
note that $a_1 \boxplus_{\preceq} a_2 = \{a \in \Hy : a \preceq a_1
+ a_2\}.$ Thus $\zero \in a_1 \boxplus_{\preceq} a_2$ iff $ a_1 +
a_2 \in {\mathcal A}_{\Null},$ iff $ a_1 =(-) a_2 $.

Reversibility is
 by \Cref{reversibility}.
Double distributivity is by \Cref{ddnote}.
\end{proof}
 \Cref{sysassoc}
raises the question of how \eqref{hypadd1} compares with
\eqref{hypadd}. We have that $a\nabla_{\mathcal I}\, b$ implies $a\preceq_{\mathcal I} b$ if $a\in \tT$ and $b\in \mathcal A$, for rings $\mathcal A$ (with ${\mathcal I}=\{\zero\}$), for supertropical systems and symmetrized systems, cf.~\Cref{supert}, and for hypersystems.

\begin{proposition}\label{tancom3}
%\begin{enumerate}
 %   \item
In any hypersystem over a hyperring $\mathcal{H},$ $a \nabla S$ iff  $\{a\} \preceq S$ for $a\in \mathcal H.$
%\item
%$S_1 \nabla S_2$ iff $S_1\cap S_2\ne \emptyset$
% \end{enumerate}
\end{proposition}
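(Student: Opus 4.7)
The plan is to unfold both sides directly from the definitions of a hypersystem and check that each reduces to the condition $a \in S$.

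First I would recall that in the hypersystem $(\overline{\mathcal H},\tT,(-),\preceq)$ associated with a hypergroup $\mathcal H$, the relation $\preceq$ is set inclusion, the zero is $\{\hyzero\}$, and the negation is the hypernegative extended setwise. Consequently the null module is $\mathcal A_{\Null} = \{T \in \overline{\mathcal H} : \hyzero \in T\}$, and by definition $a\nabla S$ means $a(-)S \in \mathcal A_{\Null}$, i.e.\ $\hyzero \in a\boxplus(-S) = \bigcup_{s\in S} a\boxplus(-s)$. On the other hand, $\{a\}\preceq S$ is just $a\in S$. So the goal is to show
\[
\hyzero \in a\boxplus(-S) \iff a\in S.
\]

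For the direction ($\Leftarrow$), if $a\in S$ then $a\boxplus(-a) \subseteq a\boxplus(-S)$, and since $\hyzero \in a\boxplus(-a)$ by the defining property of the hypernegative (\eqref{hyp3} of \Cref{hyp}), we conclude $\hyzero \in a\boxplus(-S)$. For the direction ($\Rightarrow$), suppose $\hyzero \in a\boxplus(-S)$. By the setwise definition of $\boxplus$, there exists $s\in S$ with $\hyzero \in a\boxplus(-s)$. By the uniqueness of hypernegatives (\eqref{hyp3} of \Cref{hyp}), $-s$ must equal the hypernegative of $a$, namely $-a$, and hence $s=a$, so $a\in S$.

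There is essentially no obstacle here; the whole argument is a one-line unpacking of definitions together with unique hypernegation. The only point to watch is to invoke uniqueness of hypernegatives correctly so that ``$\hyzero \in a\boxplus(-s)$'' forces $s=a$ rather than some weaker conclusion; this is precisely what distinguishes hypergroups from the more general multigroup-like structures considered earlier (see \Cref{ex-acirc}), and it is what makes the proposition clean in the hypersystem setting.
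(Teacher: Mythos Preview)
Your proof is correct and follows essentially the same approach as the paper: unfold the definitions so that both sides become the condition $a\in S$, using uniqueness of hypernegatives for the nontrivial direction. The paper's proof only writes out the implication $a\nabla S \Rightarrow \{a\}\preceq S$ (the reverse being immediate from \Cref{nablprec} or, as you do, from $\hyzero\in a\boxplus(-a)$), but the argument is the same.
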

 \begin{proof} %(i)
 If $a \nabla S = \{ a_i: i \in I\},$ then $a \nabla
 a_i$ for some $i$, implying $a =
 a_i$, and thus $a \in S$, i.e., $\{a\} \preceq S$.
%(ii) $S_1 \nabla S_2$ iff $\zero\in S_1 (-) S_2$, iff $a_1(-)a_2 = \zero$ for some $a_i\in S_i,$ i.e., $a_1 = a_2.$
\end{proof}

\subsection{Special kinds of triples}$ $

Here are two properties which relate to hyperring theory.

 \subsubsection{$(-)$-Regular triples}$ $

We also weaken metatangibility,  for some hyperring examples.

\begin{definition}\label{reg1} A triple of the second kind is \textbf{$(-)$-regular} if    $(-) a_1 + a_2 + a_3
\in {\mathcal A}_{\Null}$ and $a_1 + a_2 +a_3 \in {\mathcal A}_{\Null}$ for $a_i \in \tT$
together imply $a_2 = (-)a_3.$
\end{definition}

%\begin{lem}\label{reg2} In an idempotent system, if
% $a_1 \preceq_\circ a_2 + a_3  $
%and $(-)a_1 \preceq_\circ a_2 + a_3,$ then $a_2 = (-)a_3$.
%\end{lem}\begin{proof} Let $d = a_2 + a_3$, and write $a_1 + c_1^\circ =
%d$ and $(-)a_1 + c_2^\circ = d.$ Then $a_1 (-)a_1 + c_2^\circ +
%c_1^\circ = d+d = d,$ implying $a_2 = (-)a_3$.
%\end{proof}

 \begin{proposition}\label{hyp0}
Any    metatangible triple of the second kind is
$(-)$-regular.
 \end{proposition}
\begin{proof} If  $(-) a_1 + a_2 + a_3
\succeq\zero$ and $a_1 + a_2 +a_3 \in {\mathcal A}_{\Null}$ and $a_2 + a_3 \in
\tT,$ then $a_1 = a_2 + a_3 = (-)a_1,$ which contradicts
the assumption that the system is of the second kind. Thus
$a_2 = (-)a_3$.
\end{proof}

\begin{definition} A hyperring $\Hy$ is \textbf{regular} when for any
$S \in \langle \Hy \rangle ,$ if both $a,-a \in S$ then $\zero \in
S.$
\end{definition}

Note that  $(-)$ cannot be of first kind in a regular hyperring,
since one just takes $S = \{a\} = \{ (\pm) a \}.$

 \begin{proposition}\label{hyp3b}
The  hypersystem of any regular hyperring is $(-)$-regular.
 \end{proposition}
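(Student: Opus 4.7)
The plan is to translate everything into the language of the underlying hyperfield $\Hy$, using that $\mathcal A = \overline{\Hy}$, $\tT = \Hy\setminus\{\hyzero\}$, the negation $(-)$ is the hypernegative $-$, and $\preceq$ is set inclusion. Under this dictionary, for $b\in \overline{\Hy}$, $b\succeq \zero$ simply means $\hyzero \in b$. So the two hypotheses $(-)a_1 + a_2 + a_3 \succeq \zero$ and $a_1 + a_2 + a_3 \succeq \zero$ read
\[
\hyzero \in (-a_1)\boxplus a_2 \boxplus a_3 \quad\text{and}\quad \hyzero \in a_1\boxplus a_2 \boxplus a_3.
\]

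First I would apply the reversibility axiom (\Cref{hyp}\eqref{hyp4}), together with the uniqueness of hypernegatives, to each of these. Writing $S := a_2\boxplus a_3 \in \langle \Hy\rangle$, the membership $\hyzero \in (-a_1)\boxplus S$ is equivalent to $a_1 \in S$, and $\hyzero\in a_1\boxplus S$ is equivalent to $-a_1\in S$. Thus both $a_1$ and $-a_1$ lie in the element $S = a_2\boxplus a_3$ of $\langle\Hy\rangle$.

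Now I invoke regularity of $\Hy$: it gives $\hyzero\in S = a_2\boxplus a_3$. By the uniqueness of hypernegatives in a hyperfield, this forces $a_3 = -a_2$, that is, $a_2 = (-)a_3$, which is the required conclusion.

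One easy sanity check to include is that the hypersystem of a regular hyperfield is indeed of the second kind, as required by \Cref{reg1}: if some $a\in\tT$ satisfied $a = -a$, the singleton $\{a\}\in\langle\Hy\rangle$ would contain both $a$ and $-a$, forcing $\hyzero\in\{a\}$ by regularity and contradicting $a\in\tT$. So there is no obstacle here and no hidden case to handle; the proof is essentially a direct unpacking of the definitions through reversibility, with regularity supplying the single nontrivial step.
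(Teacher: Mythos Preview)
Your proof is correct and follows essentially the same approach as the paper's: translate the two surpassing conditions into $a_1,\,-a_1\in a_2\boxplus a_3$, apply regularity to get $\hyzero\in a_2\boxplus a_3$, and conclude $a_2=(-)a_3$ from unique hypernegatives. Your explicit verification that the hypersystem is of the second kind is a nice addition; the paper notes this fact separately (just before the proposition) rather than in the proof itself.
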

\begin{proof}
Let $\mathcal A=\langle \Hy \rangle $. Suppose $(-) a_1 \boxplus a_2
\boxplus a_3 \succeq\zero$ and $a_1 \boxplus a_2 \boxplus
a_3\succeq\zero$ for $a_i \in \Hy$. By assumption, $a_2 \boxplus
a_3$ contains both $-a_1$ and $a_1$, and thus $\zero$, implying $a_2
= (-)a_3.$
\end{proof}

 \subsubsection{Geometric triples}$ $

Here is a different sort of triple, arising from lattice theory but
also applicable to various hyperfields.% An (algebraic)
%$\tT$-\textbf{semilattice} is a set with a sup function $\vee$;
%these correspond to  idempotent semigroups via the identification of
%$a\vee b$ with $a + b,$ together with a compatible group action $\tT
%\times \mathcal A \to \mathcal A,$ in the sense that $g( a\vee b) =
%ga \vee gb, \ \forall g \in \tT.   $ This makes $\mathcal A$ a
%$\tT$-module.
%

 We define an element $c
\in \mathcal A$ to be \textbf{(additively) $\tT$-irreducible} if $c
= a+b $ for $a $ in $\tT$ implies $b=c$ or $a = c$.

\begin{proposition}\label{geom1} Every $\tT$-irreducible element $c$ is in $\tT$.
\end{proposition}\begin{proof} Write $c = \sum_{i=1}^t a_i$ for $a_i \in
\tT,$ with $t$ minimal. We are done if $t = 1,$ so assume that
$t>1$. Then $c = a_t + \sum_{i=1}^{t-1} a_i$ implies $c =  \sum
_{i=1}^{t-1} a_i$, contrary to the minimality of $t$.
\end{proof}

\begin{definition}\label{geom} A triple $(\mathcal A, \tT, (-))$ is
\textbf{geometric} if every element of $\tT$ is $\tT$-irreducible.
\end{definition}

 \begin{lem}
A  metatangible triple is geometric iff it  is $(-)$-bipotent.
 \end{lem}
 \begin{proof}
 For $a_1\ne (-)a_2$ in $\tT,$ $a_1+a_2$ is tangible; this yields both directions.
 \end{proof}

 Thus geometric triples are a generalization of $(-)$-bipotent triples,
 which are relevant for hyperrings, as we see in the next example.
 \begin{example}$ $
\begin{enumerate}
     \item The triples of   the phase hyperfield,    the hyperfield of
   signs $\mathcal S$,  and the
   triangle hyperfield triple of \cite[\S
 5.1]{Vi} are geometric and regular.
 \item The triples of   the  Krasner hyperfield $\mathcal K$   and the
   triangle hyperfield triple of \cite[\S
 5.1]{Vi} are geometric but not regular.
   %  \todo[inline]{SG: we now speak of tropical hyperfield, then we can say it ``corresponds'' to supertropical algebra.}
   \item  The hyperfield constructed in \Cref{prop-CC} often is a geometric triple, but not
   always;
 in the complex hyperfield, one has
$(\theta, b)+ (\text{any angular sector}, b')=(\theta, b)$ if
$b>b'.$ %(\theta, b) is tangible (or an element of H) and (any
%angular sector, b') may not be tangible
\end{enumerate}
\end{example}

%
%\begin{lem}\label{reg11} If $\langle \tT \rangle $ is idempotent, then the hypersystem  $\tT$ is regular.\end{lem}
%\begin{proof} $\zero \in a \boxplus (-a) \subseteq S\boxplus S = S.$
%%\todo[inline]{I corrected the definition and the proof
%% to use the notations of hypersystems.}
%%%\todo[inline]{\rm MA: typos corrected}
%\end{proof}

\begin{lem}\label{firstc3}  If $\one $ is irreducible and $\tT$ is a group, then $(\mathcal A', \tT', (-))$ is a geometric triple.
\end{lem}\begin{proof}
%\todo[inline]{\rm MA: I corrected again, there was a typo and
%one does not need anymore the condition with $c_1$.}
 Suppose   $a = c_1 + c_2$ with $c_i\in \tT$.  Then $\one =a
a^{-1}= c_1 a^{-1}+ c_2a^{-1},$  so
by hypothesis  $a = c_1$ or $a = c_2$.
\end{proof}

\begin{lem}\label{rev122}
The sum of elements $b = \sum_{i=1}^t a_i$ (for $a_i \in \tT$) in a
geometric triple cannot be in~$\tT$ unless some $a_i = b$.
\end{lem}\begin{proof} For any $i$, write $b =( \sum_{j \ne i} a_j
 )+
a_i.$

 Either  $a_i = b$ or  $\sum_{j\ne i} a_j =b $ and we continue
inductively on $t$.\end{proof}

 %, especially in conjunction with the following weaker %version of
%metatangible, extending unique negation:
%
%\begin{definition}\label{reg0} A triple is \textbf{$3$-negated} if for any $a_i \in \tT,$ $a_1
%+ a_2 + a_3 \succeq\zero$ implies   $(-)a_1 \preceq a_2 +
%a_3.$
%\end{definition}

 \section{Comparison between systems and other constructions}\label{Other}

\subsection{Fuzzy rings as systems}\label{fuzz}$ $

Fuzzy rings where introduced by Dress.

\begin{definition}[{\cite[Definitions~2.1,2.8]{Dr},
\cite[Definition~2.14]{GJL}}]\label{fuzzy0}  $\mathcal A := (\mathcal A, +, \cdot, \zero, \one )$ is a
\textbf{fuzzy ring} if it is an $\mathcal A^\times$-gen bimodule and has a
distinguished element $\vep$ and a proper  ideal $\mathcal I$
satisfying the following axioms for $a\in\mcA^\times,$  $b_i\in \mathcal A$:
\begin{enumerate}
\item\label{fuzzy1} $\vep^2 = \one;$
\item\label{fuzzy2}  $a = \vep $, iff $a\in \mathcal A^\times$  with  $\one + a \in \mathcal I$;
\item\label{fuzzy3} If   $b_1 + b_1,\ b_3 + b_4 \in I,$ then
$b_1 b_3 + \vep b_2 b_4 \in \mathcal I;$
\item\label{fuzzy4} If   $b_1 +
b_2( b_3 + b_4 ) \in \mathcal I,$ then $b_1 +  b_2  b_3  +  b_2
b_4 \in \mathcal I.$
\end{enumerate}

The fuzzy ring is \textbf{coherent} if $\mathcal A^\times$
spans $(\mathcal A,+)$.

A \textbf{weak fuzzy morphism}\cite[\S1.4]{DrW}, \cite[Definition~2.16]{GJL} of coherent fuzzy rings  $f$ from $( \mcA,\mathcal I)$ to $(\mcA,\mathcal I')$ is a   multiplicative map $f: \mcA^\times\to {\mcA'}^\times$, having the property that if $\sum a_i \in \mathcal I$ for $a_i\in \mathcal A^\times$ then
 $\sum f(a_i)\in \mathcal I'.$ This defines the category $\mfrak{wFR}$ ($\operatorname{FuzzRing}_{\operatorname{wk}}$ in \cite{BL2}).

 We also define a subcategory $\mfrak{FR}$ ($\operatorname{FuzzRing}_{\operatorname{str}}$   in \cite{BL2}) of $\mfrak{wFR}$, whose morphisms, called \textbf{fuzzy morphisms} \cite[\S1.4]{DrW},  \cite[Definition~2.17]{GJL}, are bimodule multiplicative  maps $f: (\mcA,\mcA^\times)\to (\mcA',(\mcA')^\times)$ satisfying
$\sum b_i c_i \in  \mathcal I $ implies $\sum f(b_i)f(c_i) \in \mathcal I',$  for all $b_i,c_i\in \mcA.$
 \end{definition}

\begin{lem}\label{fuz}
    There is a functor from    $\mfrak{wFR}$ to $(\WMBT,\preceq)$.
\end{lem}
\begin{proof}
    It is easy to see that for a coherent fuzzy ring $\mathcal A$, $(\mathcal A,\mathcal A^\times, (-))$ is a triple, where $\tT$ is the group~$\mathcal A^\times$ and $(-)a = \vep a.$ %We can also show that $\mathcal I$ defines a  surpassing relation $\preceq_{\mathcal I}$. Indeed,
Condition~\eqref{fuzzy2} implies
$\mathcal A^\circ \subset \mathcal I,$
and $\mathcal A^\times$ is uniquely quasi-negated over $\mathcal I$,
moreover
$\mathcal A^\times \cap \mathcal I = \emptyset$ since $\mathcal I $ is a proper ideal.
Then, by \Cref{newsys}, $\mathcal I$ defines a  surpassing relation $\preceq_{\mathcal I}$.
This sends objects from $\mfrak{wFR}$ to $\WMBT $.
Weak fuzzy morphisms clearly are the same as  weak morphisms of the corresponding system.
\end{proof}

\begin{rem} We cannot generalize \Cref{fuz} to  $\mfrak{FR}$, since although  fuzzy morphisms preserve the balance relation $\nabla_{\mathcal I}$,
 the fuzzy morphism  condition does preserve $\preceq_{\mathcal I}.$
\end{rem}

 To go the other direction, we have a generalization of \cite[Theorem~3.3]{GJL}, with similar proof.

\begin{theorem}\label{fuzz07}
There is a   functor from  $(\FHSrT,\preceq_{\operatorname{hyp}})$ to  $\mfrak{FR}$, sending an nd-semiring system  of hyper-type $(\mcA,\tT,(-),\preceq)$ to the fuzzy ring $\mcA$ with the distinguished element $\vep=(-) \one$, and the proper ideal $\mathcal I=\mcA_{\Null}$.
\end{theorem}
 \begin{proof} Conditions \eqref{fuzzy1} and \eqref{fuzzy2} of \Cref{fuzzy0} are automatic for $\eps = (-)\one,$ so
 we need to verify \eqref{fuzzy3} and~\eqref{fuzzy4} of \Cref{fuzzy0}. Take $b_i\in \mcA.$

 If $b_1+b_1, b_3+b_4 \succeq \zero,$ then taking $a_i\in\tTz$, with $a_i \preceq b_i$ such that $a_1 +a_2 \succeq \zero$ and $a_3+a_4 \succeq \zero,$ we have $a_2 = (-) a_1$ and $a_4 = (-)a_3,$ so $a_1a_3 (-) a_2 a_4 = a_1 a_3 (-) a_1 a_3\succeq \zero.$ Hence $b_1 b_3 (-)b_2 b_4 \succeq a_1 a_3 (-) a_2 a_4\succeq \zero,  $
 yielding \eqref{fuzzy3}.

 If $b_1 + b_2(b_3+b_4) \succeq \zero,$ then
 take $a_1,a_2,a\in \tTz$ such that $a_i\preceq b_i$, $a\preceq b_3+b_4$ and $a_1 + a_2a \succeq \zero,$
 and then $a_1 + a_2b_3 + a_2b_4 =a_1 + a_2(b_3+b_4)\succeq a_1 + a_2a
 \succeq \zero,$
 implying $b_1 + b_2b_3 + b_2b_4 \succeq \zero,$
  yielding \eqref{fuzzy4}.

  Likewise, if $f$ is a morphism and $\sum b_i b_i'\succeq \zero$ then taking $a_i,a_i'\in \tTz$, such that $a_i \preceq b_i,$ $a'_i \preceq b'_i$ and $\sum_i a_i a_i' \succeq \zero$, we have  $\zero \preceq f(\sum_i a_i a_i')\preceq \sum_i f(a_i)f(a_i') \preceq \sum_i f(b_i)f(b_i').$
  \end{proof}

 \begin{theorem}\label{fuzz071}
  There is a dominant functor sending $(\mfrak{wFBT},\preceq_{\operatorname{hyp}})$ (having weak morphisms)
      to $\mfrak{wFR}$.
 \end{theorem}\begin{proof}
      The same proof, now in conjunction with \Cref{fuz}.
 \end{proof}

\begin{example}\label{fuzzytrop}
 The supertropical semiring is isomorphic to the construction in \cite[Theorem~4.3]{DrW}, where we identify $a^{\circ}$
 with the subset $\{a' : a' \le a \}.$
 Thus
\cite[Definition~4.1]{IKR} corresponds to its valuation.
\footnote{As a special case, by \cite[Example~2.33~(ii)]{AGR1}, the
Puiseux series system has a surjective non-archimedean valuation. We
follow the notation of \cite[Example~2.33~(iii)]{AGR1}, extending
$\Phi$ element wise to $\nsets (\mathcal A)$, and writing $S_K$
for $\Phi^{-1}(S)$ for any $ S\in \nsets (\mathcal A)$.  The same proof as
\cite[Lemma~4.5]{DrW} shows that $\Phi$ preserves the systemic
operations (with the exception of when the residue field has order
2, which requires special treatment). Hence the supertropical
semiring, viewed as the fuzzy ring of \Cref{fuzzytrop}, is a
fuzzy ring, as explained after \cite[Lemma~4.5]{DrW}.}
\end{example}

 \begin{remark}\label{fuzz072}$ $
     \begin{enumerate}
         \item \Cref{fuzz07} extends to the category $\MBT$, when one   generalizes \Cref{fuzzy0}, replacing $A^\times$ by any monoid $\tT$, with Axiom~(ii) replaced by:

         $a_1 = \varepsilon a_2$, for $a_i\in \tT,$ if and only if $a_1 + a_2 \in \mcI.$

         \item Fuzzy rings are called
  \textbf{field-like} in \cite{GJL} if for any $a_1,a_2\in \mcA^\times$ there is $a\in \mcA^\times \cup \{ \zero\}$ such that $a_1+a_2+a \in \mathcal I.$ The corresponding systemic axiom is given in \Cref{flf}.

    \item A natural functor from the subcategory of metatangible systems in  $(\FHSrT;\preceq_{\operatorname{hyp}})$ to $\mfrak{FR}$
 was given in \cite{Row21,JuRo}. In this case,
   Condition \eqref{fuzzy4} is a special case of associativity, so we are left with \eqref{fuzzy3}, which
 holds for metatangible systems by \cite[Theorem~6.54]{Row21}.

 But metatangible semiring systems need not be of hyper-type (cf.~\Cref{supert}).
     \end{enumerate}
 \end{remark}

 \subsection{Blueprints and systems}\label{bluesy}$ $

%\todo{marianne suggests to explain more}
Let us review blueprints~\cite{Lor1,Lor2} and how they are developed, putting them
in the context of semiring systems.
In~\cite[Def.~1.1]{Lor1}, Lorscheid starts with a multiplicative monoid $\tT$
embedded in the monoid semiring $\Net[\tT]$, and defines a blueprint
as a pair $(\Net[\tT], \mathcal{R})$ where $\mathcal{R}$ is a congruence
on $\Net[\tT]$. A blueprint is said to be
proper if the restriction of this congruence to $\tT$ is trivial.
By taking $\mathcal{A}=\Net[\tT]/\mathcal{R}$,
we see that blueprints yield a special class of semirings,
that are equipped with an mgen bimodule structure and are additively
generated by the image of $\tT$ by the quotient map modulo $\mathcal{R}$. In particular, when the blueprint
is proper, $\mathcal{A}$ contains $\tT$ and is additively generated
by $\tT$. Such a blueprint is almost a semiring triple: unlike triples,
it is not equipped with a negation map.
More recently, in~\cite{BL},
there are notions of ordered blueprint and of $\mathbb{F}_1^{\pm}$-algebra,
which are closer to the notion
of a semiring system: an ordered blueprint is already a semiring $\mathcal{A}$ additively generated by $\tT$, with a $\tT$-gen bimodule pre-order, but this pre-order
is not required to satisfy the additional properties of a surpassing relation \Cref{precedeq07}
%% So blueprints are specials
%% cases of semiring triples, without including the
%% negation map in the structure laws.
%% The ordered blueprints
%%  considered in \cite{BL} are special cases of semiring systems,
(the notation $B^\bullet$ corresponds to $\tT$,
 $B^+$ corresponds to $\mathcal A$); a $\mathbb{F}_1^{\pm}$-algebra
 is an ordered blueprint equipped with a negation
 (the distinguished element $\epsilon$ of~\cite{BL} plays
 the role of $(-)1$ here), satisfying Condition~\eqref{precedeq07-1}  of \Cref{precedeq07} (the order is a $\tT$-pre-order), however it
 does not satisfies all the properties of semiring system (Condition~\eqref{triple3} of~\Cref{int1},
nor Condition~\eqref{surp-5} of \Cref{precedeq07}, or the
unique negation property of \Cref{syst}).
 %$\varepsilon$ corresponds to $(-)\one;$
 %and the order is the surpassing relation.
The subclass of ``pasteurized ordered blueprints''
satisfies the unique negation property.
%are quite similar to semiring systems.

%Ordered blueprints correspond to systems with a modulus,
%cf.~\cite[Definition~4.10]{AGR1}.

\begin{remark} $\Net[\tT]$ is generic in the sense that for any
$\tT$-gen bimodule $\mathcal A$, there is a  map
$\varphi_{\mathcal A}: \Net[\tT]\to \mathcal A$ which is the
identity on $\tT$ and sends the formal sum $\sum n_a a$ to its sum
taken in $\mathcal A$. We write $\bar a$ for $\varphi_{\mathcal
A}(a).$ A certain subtlety is involved here: If $\mathcal A$ is not
a distributive semiring, which is the case for certain hyperfields, then
$\varphi_{\mathcal A}$ cannot be a semiring homomorphism, but
$\varphi_{\mathcal A}$ is indeed a homomorphism when $\mathcal A$ is
 distributive.\end{remark}

\begin{remark}
  Any partial order $\preceq$ of a $\tT$-gen semiring   $\mathcal A$
 additively generated by $\tT$,
induces a pre-order $\preceq_{\mathcal A}$  on
$\Net[\tT]$, given by $b \preceq b'$ if writing $b = \sum n_a a$ and
$b' = \sum n_a' a$ we have  $ \sum n_a \bar a\preceq \sum n_a' \bar
a$ in $\mathcal A$. However, even when $\preceq$ is a surpassing relation, the induced
pre-order $\preceq_{\mathcal{A}}$ may not be a surpassing relation
(property (c) of~\Cref{precedeq07} may fail).
\end{remark}

\subsection{Tracts, idylls and systems}\label{trac}$ $

The same sort of
 argument as in \Cref{bluesy} shows that the tracts of \cite{BB2}
 and idylls of \cite{BL} are closely related to triples and systems in the special case that
 $\mathcal A = \mathbb N[G]$,
 for a group $G$. %, which \cite{BL} shows are enough for the matroid applications.
%We use   the  notation of \cite{BB2}.
In \cite{BL}, a tract is a group $G$ with the monoid semiring $\Net[G]$, a subset $N_G$ of $\Net[G]$ stable by the multiplication of elements of $G$, and a negation satisfying $1 + (-)1 \in N_G$
and unique quasi-negation over $N_G$
 (again the distinguished element $\epsilon$ of~\cite{BL} plays
 the role of $(-)1$ here).
 In \cite{BL} tracts such that $N_G$ is a $G$-sub-bimodule
 of $\Net[G]$ are called idylls.

%The theory of tracts can be embedded into the theory of  systems, by
%an easy natural argument.

\begin{theorem}\label{trac1}   Any tract gives rise to a
  triple of the form $( \mathcal A = \mathbb N[G] , G, (-))$
with a group $\tT = G$, satisfying  unique quasi-negation over $N_G$.
When the tract is an idyll, i.e.\ $N_G$  is a $G$-sub-bimodule of $\mathbb N[G]$, then it also gives rise to a system
$( \mathcal A = \mathbb N[G] , G, (-),\preceq_{\mathcal I})$ where
$\mathcal I = N_G$. This gives a faithful functor from such tracts to $(\MSr,\preceq)$.

Conversely, any system of the form $( \mathcal A = \mathbb N[G] ,G,
(-),\preceq)$, for  $G = \tT$ a group, gives rise to an idyll where $N_G = \{
b : \zero \preceq b\}$.
\end{theorem}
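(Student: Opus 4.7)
The plan is to set up a correspondence in which the negation map $(-)$ on $\mathbb N[G]$ is multiplication by $\epsilon := (-)\one \in G$, and the null module $\mathcal A_{\Null}$ of the system is identified with the subset $N_G$ of the tract.

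For the forward direction, given a tract $(G,N_G)$ with distinguished $\epsilon \in G$ of order at most two, I set $\mathcal A := \mathbb N[G]$ equipped with its natural $G$-action, and define $(-)b := \epsilon b$, which is automatically an involutive negation map compatible with the $G$-action. Condition~\eqref{triple2} of \Cref{int1} holds by the very definition of $\mathbb N[G]$; for condition~\eqref{triple3}, a parity argument on the total coefficient sum in $\mathbb N[G]$ suffices, since every element of $\mathcal A^\circ$ has the form $\sum n_g(g + \epsilon g)$, hence even total coefficient sum, whereas a basis element of $G$ has total coefficient sum $1$. To upgrade this triple to a system I apply \Cref{newsys} with $\mathcal I := N_G$: the closure of $N_G$ under addition and under the $G$-action is part of the tract definition, the containment $\mathcal A^\circ \subseteq N_G$ follows from $\one + \epsilon \in N_G$ combined with closure, and the unique quasi-negation of $G$ over $N_G$ is exactly the unique-inverse axiom of tracts: from $g_1 + \epsilon g_2 \in N_G$, uniqueness of the inverse of $g_1$ forces $\epsilon g_2 = \epsilon g_1$, whence $g_1 = g_2$.

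For the converse, I start from a system $(\mathbb N[G], G, (-), \preceq)$ with $G$ a group and set $N_G := \mathcal A_{\Null}$. Then $0 \in N_G$ by reflexivity of $\preceq$, and $\one + (-)\one \in \mathcal A^\circ \subseteq \mathcal A_{\Null}$ by axiom~\eqref{surp-1} of \Cref{precedeq07}. Closure of $\mathcal A_{\Null}$ under addition and under the $G$-action is built into its status as a $\tT$-submodule. For the unique-inverse axiom, existence of an additive inverse for $g_1 \in G$ follows from $g_1 + (-)g_1 \in \mathcal A^\circ \subseteq N_G$, while uniqueness is obtained by rewriting $g_1 + g_2 = g_1 (-)((-)g_2)$ and invoking the unique quasi-negation of $\tTz$ over $\mathcal A_{\Null}$ postulated in \Cref{syst}, which forces $(-)g_2 = g_1$.

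The main subtlety is matching the auxiliary disjointness $\mathcal I \cap \tT = \emptyset$ of \Cref{newsys} with the tract side: in the forward direction one needs $N_G \cap G = \emptyset$, which is either imposed with the definition of tract or follows from the unique-inverse axiom (if a nonzero $g \in N_G$, then together with $g + \epsilon g \in N_G$ one would obtain two distinct elements $h$ yielding elements of $N_G$ by translation, contradicting uniqueness). In the converse direction, the analogous disjointness $\mathcal A_{\Null} \cap G = \emptyset$ is automatic because $\preceq$ is a surpassing relation and hence reduces to equality on $\tTz$, by property~\eqref{surp-5} of \Cref{precedeq07}.
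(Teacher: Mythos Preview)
Your proof is correct and follows the same approach as the paper's, though considerably more thorough: the paper's proof is quite terse, essentially only noting that $G$ generates $\mathbb N[G]$, that $(-)$ is multiplication by~$\varepsilon$, that $\mathcal A_{\Null}=N_G$ under $\preceq_{\mathcal I}$, and for the converse that $\{b:\zero\preceq b\}$ is a $\tT$-module disjoint from~$\tT$. You supply several verifications the paper leaves implicit, notably the parity argument for condition~\eqref{triple3} of \Cref{int1}, the containment $\mathcal A^\circ\subseteq N_G$, and the derivation of unique quasi-negation from the tract axiom on~$\varepsilon$.

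One caveat: you assert that closure of $N_G$ under addition is ``part of the tract definition.'' In the standard Baker--Bowler axioms this is \emph{not} required (only $G$-stability is); the paper, however, implicitly treats $N_G$ as a $\tT$-submodule throughout (cf.\ its converse argument and \Cref{dich0b}), so your assumption matches the paper's usage. If you want your argument to stand independently of that convention, you should either flag this as an additional hypothesis or restrict to tracts whose null set is additively closed.
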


\begin{proof} $G$ additively generates $ \mathbb N[G]$. Since $\varepsilon$ is the negative of $\one,$ the negation map on
$\mathcal A$ is given by $a \mapsto \varepsilon a.$  Thus we have a
triple. If $b \in N[G]$ then $\zero + b = b$, implying $\zero
\preceq b$. If $ \zero \preceq b$ then $  b =  \zero +c = c \in
N_G$.

For the converse, we recall that $N_G=\{ b : \zero \preceq b\}$ is a
$\tT$-gen bimodule, which is disjoint from $\tT$ so does not
contain~$\one.$
\end{proof}

 \begin{remark}\label{dich0b}$ $\begin{enumerate}
 \item In the first assertion of \Cref{trac1} we have ``forgotten'' $N_G$ of the tract,
which could be any subset of $\mathcal A$ not containing
$\one,$ having a $G$-action. If we want to replace $N_G$ in forming the system,
 let us consider a $G$-submodule $N_G$ must sit inside the
triple $( \mathcal A = \mathbb N[G] , G, (-))$. $\tT ^\circ
\subseteq N_G $ since $\one ^\circ \in N_G$. In
 the hypersystem of \Cref{hypersys}, the intuition stated after \cite[condition
(T3)]{BB2} indeed indicates that all elements of~$N_G $ contain
$\zero$, which is reinforced in \cite[\S 1.4]{BB2}.  Thus it would
be natural to take $N_G$ to be any $G$-sub-bimodule of $\{ b : \zero
\preceq b\}$ containing $G^\circ $.
\item The definition of $\preceq$ in \Cref{trac1} is  analogous to
 \Cref{precex}~\eqref{precex-i}, of $\preceq_\circ$, and in fact we could
 take $\preceq = \preceq_\circ$ and $N_G = \mathcal A^\circ$.

\item   In the usual hyperfield
examples, $N_G = \mathcal A^\circ,$  as explained in~Footnote 3.

%\item  Here is another way of obtaining a
%  tract from a
% triple $(\mathcal A, \tT, (-))$. Given a
%$\tT$-gen sub-bimodule $ \mathcal I \subseteq \mathcal A,$
%  we can define  $G$  to be the group generated by
%those elements $a \in \tT$ such that $a^\circ \in  \mathcal I .$
%Note that $(-)(a^{-1}) a = (-)1,$ so  $ (-)(a^{-1}) = ( (-)a)^{-1} $
%and
 %$({a^{-1}})^\circ = \frac{a^\circ } {a^2} \in  \mathcal I $.

%Then we have the triple  $(\mathcal A,\,  G,\, (-))$ where $(-)a$ is
%defined as $((-)1)a,$ and its corresponding tract takes $N_G =
%\mathcal I.$
\end{enumerate}
\end{remark}

\section{Induced algebraic constructions}
\label{Induced}

Some of the standard algebraic constructions induce systems based on
the original systems.

\subsection{Function systems:  polynomials and matrices}$ $

 Here is a convenient way for building up triples and systems.
%  The \textbf{graded power} $\oplus_{i \in I}\, \mathcal
%A$ of a  triple $(\mathcal A, \tT, (-))$ by a semigroup $I$, a
%special case, is defined as the direct sum of copies of $\mathcal A$
%indexed by the elements of~$I$. We write this as $\mathcal A ^{(n)}$
%when $I = \{ 1, \dots, n\}.$
%\subsection{Function systems:  polynomials and matrices}
\label{polmat}
%$ $
We  consider a structure %on $\mathcal{A}^{(I)}$,
 generalizing the construction of
semigroup algebras. For any set $S$, the \textbf{support} $\supp{f}$ of a function $f:S\to \mathcal{A}$
is $\{ s\in S : f(s) \ne \zero\}.$

\begin{definition} [The \textbf{convolution endofunctor}]\label{conv}
 Consider a \distributed $\tT$-gen nd-semiring $\mcA$  % remember to add surpassing then becomes systems
 and a multiplicative monoid $S$.
If $S$ has an absorbing element $0_S$, we set $S^*:=S\setminus \{0_S\}$, and $S^*:=S$ otherwise.

We form the  \distributed $\tT_S$-nd-semiring
 $\mathcal A^{(S^*)}$ to be the set of functions $f:S^*\to \mathcal A$ with finite support, endowed with the pointwise addition and negation,
$\tT_S$ is defined as the set of functions   $f:S^*\to  \tTz$ with $|\supp{f}|=1$,
%if $S$ has no absorbing element, or $|\supp{f}|\leq 1$ otherwise,
and the  action of $f\in (\tT_S)_\zero$ on $g\in \mathcal A^{(S^*)} $ is
denoted $f*g$ and defined, for $s\in S^*$, as
$(f * g)(s) =  \sum_{u,v\in S^*: \, uv = s}f(u) g(v)$,
where the empty sum is $\zero$.
%if %$\supp{f}=\{u\} $ and $s\in S^*$.
Since $S$ has a unit $\one_S$, considering the element
of $\tT_S$ with support $\{\one_S\}$ and value $\one_{\mathcal A}$ for $\one_S$, we get the unit of the monoid
$ (\tT_S)_\zero$.
%In order for $\zero\notin \tT_S$ we
%take $S$ to be a monoid without a zero element.
(A more general situation is given in \Cref{conv1}.)

In particular, the multiplication in $ \tT_S$ is
such that the support of $f * g$ is equal to
$\{uv\}\cap S^*$ and
$(f * g)(uv)=f(u)g(v)$ if
$uv\in S^*$, $\supp{f}=\{u\} $ and
$\supp{g}=\{v\} $.

Any morphism $\varphi:\mcA\to \mcA'$ induces a morphism $\varphi^S$ of $\mcA^{(S^*)}$,  by $\varphi^S(f)(b)= \varphi(f(b)),$ for $f\in \mcA^{(S^*)}$, $b\in \mcA.$
This provides an endofunctor $^S$ of $\MMod$.
which we call the \textbf{convolution endofunctor}.

If we started with a triple $(\mathcal A, \tT, (-))$, then we get a triple
$$(\mathcal A^{(S^*)} , \tT_{S}, (-)),$$  called  the \textbf{convolution triple}, where $((-)h)(s) =(-)(h(s)).$ This again provides a \textbf{convolution endofunctor} of $\MBT$.

If we started with a system, then we get a system
$$(\mathcal A^{(S^*)} , \tT_{S}, (-),\preceq),$$  called  the \textbf{convolution system}, where
we write $f\preceq g$ if and only if $f(s)\preceq g(s)$ for all $s\in S.$. This provides a \textbf{convolution endofunctor} of $(\MB,\preceq)$.

If in addition, $\mathcal A$ is a \distributed $\tT$-gen nd-semiring,
 the set $\mathcal A^{(S^*)} $, endowed with the above addition and convolution product given by $(f * g)(s) = \sum_{uv = s}f(u)g(v)$, is a \distributed $\tT$-gen nd-semiring, denoted $\mathcal A[S]$, and $\tT_S$ is also denoted
 $\tT_{\mathcal A[S]}$.
\end{definition}

%This extends to systems the familiar construction
% of group or monoid algebras over rings.
%\todo[inline]{L: I just tried to do this SG: to be unified}
 As examples, any system $(\mathcal A, \tT, (-),\preceq)$ naturally gives rise to two important secondary constructions of convolution systems:
   \begin{itemize}  \item[-] If $(\lambda_i)_{i\in I}$ is a family of variables, and $\Lambda$ is the commutative monoid consisting of formal products of these variables, we obtain the \textbf{polynomial system} $(\mathcal A[\Lambda], \tT_{\mathcal A[\Lambda]}, (-),\preceq)$,
   for $\tT_{\mathcal A[\Lambda]}$ the set of monomials with coefficients in
   $\tT$.

  % for $(-)$ of first kind,
   %since intermediate coefficients in a product could duplicate, and likewise  for  $(-)$ of second kind.
  % If the original system is metatangible then so is the polynomial
  % system (but it is not bipotent).

  \item[-] The other main construction, matrices, is
  subtler, since it involves a smigroup $S$ which is not a monoid, see \Cref{conv1}.
  Matrix theory is pursued in \cite{AGR1}.
   \end{itemize}

 %\todo[inline]{SG: check}

 \begin{rem}
 $ $
  \begin{itemize}
  \item
  For the polynomial system, one might be tempted to take as set of tangible elements, the set of  polynomials with tangible
   coefficients, but this is not a monoid if $\one(-)\one\ne \zero$ since then $(\one+\lambda)(\one(-)\lambda) = \one+(\one(-)\one)\lambda + \lambda^2$ and $\one(-)\one$ is not tangible.

\item Here is an advantage of the structure theory of systems over hyperrings. The usual construction  $\mathcal{H}[\Lambda]$ of polynomials over a hyperring $\mathcal{H}$
 is equivalent to considering the set  of
 polynomials with tangible or zero coefficients.
However, if $\mathcal{H}$ is not a ring, then
 the multiplicative law is multivalued, and so $\mathcal{H}[\Lambda]$  is not a hyperring.

\item
An alternate hyperring construction
of polynomials over a hyperring $R$ could be
attempted by means of \Cref{prop-CC}, where one
respectively takes
 $R[\la]/G$ , using the original $G$ as constants embedded as a subgroup, to define the congruence.
Note that this does not provide enough elements in the product given in \eqref{mul}, since one would want to take cosets of each coefficient.
 On the other hand, taking cosets for  each coefficient modulo $G$ would make  the coefficient of $\la$ in $(\la+\one)(\la +2) = \la^2 + 3\la +2$ too large a set.

\end{itemize}
 \end{rem}

 \subsection{Layered systems}\label{layered}

 \begin{construction}\label{AGGGexmod1}
  We are given a multiplicative  monoid $(\tT_L, \cdot, 1)$ without a zero element, an ordered monoid $(\tG,\cdot,\one)$
     without a bottom element,
     and a  $\tT_L$-gen module $(L,+)$,  perhaps lacking  a zero element.
%  We assume that $0\in L$ or $\tG$ has an absorbing
%element $\zero$.
%  (If $L$ is an additive monoid with a negation map,
%  that we  designate as $-$, then putting $\tT_L =  \{\pm1\}$
%  gives us a triple $(L,\tT_L ,-)$, which
%  is a triple if $1$ generates $(L,+)$.)
   \begin{enumerate}  \item\label{AGGGexmod1-i} We equip $L\times \tG$ with the following addition
   \begin{equation}
\label{basicex17}(\ell_1,a_1) + (\ell_2,a_2) = \begin{cases}
(\ell_1,a_1) \text{ if } a_1 > a_2,
 \\ (\ell_2,a_2) \text{ if } a_1 < a_2,  \\  (\ell_1 + \ell_2,\, a_1)
 \text{ if }   a_1 =  a_2  ,%\\  (\ell_1 + \ell_2,a_1^\circ)
% \text{ if }   a_2 = (-)a_1;
\end{cases}\end{equation}
and the multiplicative action of $\tT = \tT_{
L \rtimes \tG}:=\tT_L \times \tG$
(defined component-wise). Since $\tG$ has no bottom element,
the above addition has no zero element. So, we adjoin
formally an absorbing zero element $\zero$ to $L\times \tG$.
%The \textbf{layered bimodule}
$ L \rtimes \tG$
consists of the set $(L\times \tG) \cup \{\zero\}$,
equipped with the above operations.

%% upas follows.
%%       When $ L$ has a zero element and $\tG$ has a bottom element,
%%       the set of elements of $L \rtimes \tG$ is $L \times \tG $. Otherwise we
%%       formally adjoin a $\zero$ element, so that the set of elements
%%       of $L \rtimes \tG$ becomes $ L \times \tG \cup
%% \{ \zero \}.$
%% We define the following addition on $L\times \tG$
%Then, take $\mathcal A=L \times \tG$.

%When $0 \in L,$ the set $0 \times \tG := \{  (0, a): a \in \tG \}$
%is an ideal, but does not define a congruence. But, if $\zero\in \tG$,
%the set $L \times
%\zero$ does define a congruence, so can be viewed as $\zero$ in the
%corresponding image of $\mathcal B  .$

    \item \label{AGGGexmod1-ii}
We now assume that $L$ has a negation map $(-)$,
i.e., $(-)\one\in \tT_L$. We define the negation map $(-)$ on $ L \rtimes \tG$ by
$(-)(\ell,a) =((-)\ell,a)$, and the kind of $(-)$ on $ L \rtimes \tG$ is the same kind
as $(-)$ on~$L$.

 We call $ L \rtimes \tG$ the \textbf{layered extension} of $\tG$ by $L$. It
 becomes a
triple $( L \rtimes \tG,   \tT_{ L \rtimes \tG}, (-))$, %where $\tT_{ L \rtimes \tG}= \tT_ L \times \tG,$
 in view of the 1:1 morphism
 $ L \to L\times \{\one\}$.
The quasi-zeros will be the elements having first coefficient in
$L^\circ$.
\item \label{AGGGexmod1-iii}
If $L$ has no zero, we formally adjoin a  multiplicatively absorbing element $\zero$ to get a monoid $L_\zero:= L\cup \{\zero\}$.
If $(L,\tT_L,(-)$ is a triple, then $( L \rtimes \tG,   \tT_{ L \rtimes \tG}, (-))$ is a triple.

If $L_\zero$ has a surpassing relation $\preceq$, we get the \textbf{layered system} by defining a surpassing
relation on $L_\zero \rtimes \tG$ by $(\ell_1,a_1) \preceq (\ell_2,a_2)$
when  $\zero\preceq l_2$ with $a_1< a_2$, or  $\ell_1 \preceq \ell_2$ with $a_1 = a_2$, and $\zero\preceq (\ell_2,a_2)$ when $\zero\preceq l_2$. Then $(L \rtimes \tG)_{\Null}=L_\Null\rtimes \tG$ and $L \rtimes \tG$ is a system if $L$ or $L_\zero$ is a system.
\item  \label{AGGGexmod1-iv} If $L$ or $L_\zero$ is a semiring, then we define multiplication on  $ L \rtimes
\tG$ component-wise for elements in $L\times \tG$, and such that $\zero$ is the absorbing element.
  \end{enumerate}
\end{construction}

\begin{rem}%$ $\begin{enumerate}
 %   \item
 This
type of extension was already considered in~\cite[Proposition
2.12]{AGG2}. It provides a category   of layered extensions, with the morphisms being pairs $(f,h):L \rtimes \tG \to L' \rtimes \tG'$ where $f:L\to L'$ is a semigroup homomorphism sending $ \tT_{ L \rtimes \tG}$ to $ \tT_{ L '\rtimes \tG'},$ and $h: \tG \to  \tG '$ is a monoid homomorphism.

  %  \item In the above layered bimodule $\mathcal B= L\rtimes \tG$, take
% $  \one_\mathcal B = (1,\one),$ and by induction, for
%$k\in \Net,$
%$$\mathbf k= \mathbf{(k -1)} + \one_\mathcal B = \one_\mathcal B + \cdots + \one_\mathcal B,$$ taken $k$
%times. %Denote also $\mathbf{- k}=-\mathbf{k}$.
%The element $\one_\mathcal B$ of $\mathcal B$ generates
%a sub-bimodule, $ \cup_{k\in %\Net}\,\mathbf{k}_\mathcal B (\{1\}\times \tG)\,
%\cup\, \{\zero\}$.
%\end{enumerate}
\end{rem}

When $\overline{\Hy}$ is the hyperfield system of a hyperfield  $\Hy$,  and $L=\overline{\Hy}\setminus \{0\}$ or $\overline{\Hy}$, we must modify addition in \Cref{AGGGexmod1} since the reversibility
 condition of  \Cref{hyp}\eqref{hyp4} fails. For example, if $a_1
 < a_2$ then $(\ell_1,a_1) + (\ell_2,a_2) = (\ell_2,a_2)$ but
 $(\ell_2,a_2)-(\ell_2,a_2)$ does not necessarily contain $(\ell_1,a_1)$
since $a_2$ might have no connection to $a_1 .$ As a remedy, we consider the following extension for hyperfields.
 \begin{definition}\label{AGGGexmod2} Let $\overline{\Hy}$ be the hyperfield system of the hyperfield  $\Hy$,  and let $L=\overline{\Hy}\setminus \{0\}$ or $\overline{\Hy}$ and $\Hy'$ be equal to ${\Hy}\setminus \{0\}$ or $\Hy$ respectively. We define the extension $L\rtimes \tG$ as the  hyperfield system of the hyperfield $\Hy'\rtimes \tG$ with the operations defined as in \Cref{AGGGexmod1}, where we modify
the third line of \eqref{basicex17}
by
\[
(\ell_1,a_1) + (\ell_2,a_1)
= \begin{cases}
 (\ell_1+\ell_2)\times \{a_1\}&\text{
 when}\; \ell_1\neq -\ell_2;\\

 ((\ell_1-\ell_1)\setminus\{0\})\times \{a_1\}\cup
( \Hy'\times \{a\in \tG,\; a<a_1\})\cup\{\zero\}
& \text{
otherwise.}\end{cases}
\]
\end{definition}

 Under this alternate definition,   reversibility holds in
 $L\rtimes \tG$, which thus indeed  is  a hyperfield system associated to the hyperfield $(\tT_L\times \tG)\cup\{0\}$.

\begin{example}\label{trythis1} When  $(\mathcal A, \tT, (-))$ is a $(-)$-bipotent triple with $(-)$ of the first kind
(i.e., identity map),
then $\tTz$ is ordered by $a_1 < a_2$ iff $a_1+a_2 = a_2,$ so when $(\tT,\cdot)$ is a monoid
we can perform the construction of
\Cref{AGGGexmod1}, for   $\tG = \tT$ or $\tG = \tTz$.
Note that, using  \Cref{circint3},
we can put $L\rtimes \tTz$ in bijection
with the quotient of $L\times \mathcal{A}$
by the equivalence relation such that
$(m \ell,a)\equiv (\ell,ma)$ for all positive
integers~$m$, and $(\ell,\zero)\equiv (\ell',\zero)$
for all $\ell,\ell'\in L$ and $a\in\mathcal{A}$.
We denote by $L\rtimes \mathcal{A}$ the quotient
set, equipped with the induced laws.
\end{example}

The $\Net$-layered semiring  is a good source of examples, taking the negation map on $\Net$ to be the identity.
%\begin{remark}\label{calculus} One reason to consider the $\Net$-layered %semiring is to enable one to handle differentiation and integration in %a meaningful way.
%One would naturally define the derivative of a polynomial $\sum_n a_n %\lambda^n$ to be $\sum_n na_n \lambda^{n-1}.$ In the supertropical %structure $n a_n \in \mathcal{A}^\circ$ for every $n\ge 2,$ so almost %everything coalesces.
%But over the $\Net$-layered semiring the coefficients of the %derivative belong to different layers, giving one hope to measure the %multiplicity of roots etc.
%\end{remark}
Another family of examples arises when taking for the layering triple the symmetrization $\hat{\Net}$ of the semiring of natural numbers discussed in \Cref{netneg}.
Alternatively, we can \textbf{truncate} $\Net$ at $m$ by applying the
congruence $\{ (\ell,\ell), (\ell',\ell''): \ell',\ell''\ge m\},$
i.e., defining the new sum $\ell_1 + \ell_2 = m$ whenever $\ell_1 +
\ell_2 \ge m$ in $\Net$ and $\ell_1  \ell_2 = m$ whenever $\ell_1
\ell_2 \ge m$ in $\Net$. The cases $m=1$ and $m=2$ are respectively
called \textbf{Boolean} and \textbf{superBoolean} in \cite{RhS}. We
get supertropical algebra for $m=2$.
 Here  are specific   examples, where   the choice of the surpassing relation $\preceq$ can
 lead to unusual properties.

 \begin{example}\label{semidir287b}$ $
   % \item If $\hgroup$ is the free monoid
  \begin{enumerate}    \item \label{semidir287b-i}   $L = \Net$, $(-)$ is the identity map,
 and $\tT_{ L \rtimes \tG} : = \{1\} \times \tG$.

\begin{itemize}
   % \item If $\tG$ is the free monoid
     \item  We define  $ (k_1,
 a_1) \preceq (k_2, a_2)$ whenever  $ a_1 = a_2 $ and either $ k_1= k_2=1$ or $ 2 \le k_1 \le
 k_2.$ Then for $k_1 = k_2 =2,$ we cannot have $(1,a_1) \preceq
 (2,a_1)$, and thus the $\preceq$ analog of tangible balance elimination fails.

 \item  We define  $ (k_1,
 a_1) \preceq (k_2, a_2)$ whenever  $ a_1 = a_2 $ and $k_1 =  k_2 $ or $k_1 = 1,\ k_2 \ge
 4$.
Then  $ (1,
 a) \preceq (2, a)$ and $ (1,
 a) \preceq (3, a)$, but $(2, a)$    does not balance $ (3, a)$, so again the    $\preceq$ analog of  tangible balance elimination
 fails.
\end{itemize}
   \item \label{semidir287b-ii}Here is an example we promised earlier after \Cref{geom1}.
    $L = \Net_{\max}$, $(-)$ is the identity map,  and $\tT_{ L \rtimes \tG} : = \{1\} \times \tG$.
 The elements of $\{2\} \rtimes \tG$ are $\tT$-irreducible although not tangible, since 2 is $\tT$-irreducible in $L$.
  But $ L \rtimes \tG$
 is  not a triple, since it is not spanned by $\tT_{ L \rtimes \tG}.$  \end{enumerate}
\end{example}

\begin{example}\label{trythis}
We can do the same sort of construction as in
\Cref{AGGGexmod1}~\eqref{AGGGexmod1-ii}, for $(-)$-bipotent triples
 $(\mathcal A, \tT, (-))$ instead of the monoid $\tG$, where we want to
 bring the triple structure of $(\mathcal A, \tT, (-))$ into the picture.
 In case $(-)$ is of the first kind we already have
 \Cref{trythis1}, so assume $(-)$ is of the second kind. For example, we
 could take a symmetrized system. In particular,
$\mathcal A$ is idempotent, so $\mathcal A = \tT_{\zero} \cup
\tT^\circ.$ In place of~\eqref{basicex17}, using
 \Cref{netneg}, writing
 $b_i = (m_i(-)n_i) {b_i}_\tT$ for $i=
 1,2,$ where either $m_i = n_i =1$ or ($m_i=1$ and $n_i = 0$), we take
 $$(\ell_1,b_1) + (\ell_2,b_2) = \begin{cases}
     ((m_1 (-)n_1)\ell_1 + (m_2 (-)n_2) \ell_2,\,  {b_1}_\tT)
   \text{ if }   {b_1}_\tT = {b_2}_\tT ,\\
        ((m_1 (-)n_1)\ell_1 + (n_2 (-)m_2) \ell_2,\,  {b_1}_\tT)
   \text{ if }   {b_1}_\tT = (-){b_2}_\tT ,\\
        (\ell_1,b_1) \text{ if } {b_1} + {b_2} =
  {b_1}, \text{ and } {b_{1}}_\tT \neq (\pm){b_{2}}_\tT ,
  \\ (\ell_2,b_2) \text{ if } {b_1} + {b_2} = {b_2}, \text{ and } {b_{1}}_\tT \neq (\pm){b_{2}}_\tT . \end{cases}$$
 %where $(-)$ is a negation on $L$, seen as a dual of the negation
% in $\mathcal A$.
 In order for this addition to be independent of the choice
 of ${b_{i}}_{\tT}$ in the representation $b_i = (m_i(-)n_i) {b_i}_\tT$,
 we need to quotient by the equivalence relation
 $(\ell,(-)b)\equiv ((-)\ell,b)$
 and $((1 (-) 1) \ell, b)\equiv (\ell,b^\circ)$.
 %% $(\ell,\, (m(-)n)b)\equiv((m(-)n)\ell,\, b )$ for all $m,n\in \{0,1\}$.
 %% , which is a congruence with respect to
% the above additive law.
 %% Then, using idempotence, the first line says
 %% $(\ell_1,{b_1}_\tT) + (\ell_2,(-){b_1}_\tT) =  (\one, {b_1}_\tT) + ((-)\one, {b_1}_\tT) = (\one (-)\one, {b_1}_\tT) \equiv (\one, {b_1}_\tT(-) {b_1}_\tT) = (\one, b_1 ^\circ),$
 %% so the result is not dependent on the choice of ${b_1}_\tT)$.

%% So we need to show that the equivalence is a congruence, i.e.,
%%    $$(\ell_1,(m_1(-)n_1)b_1) + (\ell_2,b_2) \sim ((m_1(-)n_1)\ell_1,b_1) + (\ell_2,b_2)$$
%% for all $\ell_1,\ell_2$, $m_1,n_1$ and     $b_1,b_2$.
%% This is true for ${b_1}+{b_2} \in \{b_1 ,b_2\}$ by the above definition.
%% Otherwise, this comes from \Cref{neg3}.

 We denote the quotient set by $L\rtimes \mathcal{A}$,
 and we equip it with the
 multiplicative action of $\tT_L \times \tT$
 (defined component-wise), and with the
 negation $(-)(\ell,a)=((-)\ell,a)=(\ell,(-)a)$.
 In this way, $L\rtimes \mathcal{A}$ becomes a triple.

%% The algebraic structure is  denoted by ${\mathcal B}=  L \rtimes {\mathcal A}$.

 %Note that when $b = m b_\tT $ we have $(\ell, b) \equiv (m \ell, b_\tT)$,
 So we almost have replicated \Cref{AGGGexmod1},
the difference being that, for $(-)$ of the second kind on~$\mathcal
A$,
 strictly speaking $\tT $ is not ordered, since $a$  and~$(-)a$ are not comparable.
%
%
% As an example we can take both the
% negation map on  $\mathcal A$ and the negation
% $(-)$ on $L$ to be the identity,
% and recover \Cref{AGGGexmod1} when $\mathcal A$ is idempotent, since we have the
%  identification of $(\ell, mb)$ with $(m\ell, b)$.
% We get something new when $\mathcal A$ is not idempotent but  $(-)$
% is of the first kind, or if
% $(-)$ is of the second kind.
%, so  we need to mod out by the corresponding congruence.
\end{example}
%\todo[inline]{MA: for me its seems natural since this means that
%$(m\ell,a)\equiv (\ell, ma)$ for $\ell\in L$, $a\in\tT$ and $m$
%positive or negative.
%The advantage is that the above construction seems to be really different from
%\cite[proposition 2.12]{AGG2}.}

% \subsubsection{A non $(-)$-bipotent triple}\label{majorex0}$ $

A classical algebra of characteristic~2 provides an example
 of a
triple of first kind with $e' = \one,$ which is not $(-)$-bipotent.

\begin{example}\label{semidir41}   These ideas lead to
several different ways of extending tropical mathematics to the
field~$\mathbb C$ of complex numbers, providing alternative
structures to the hyperfields considered
in~\Cref{ex-tropicalcomplex} and~\Cref{ex-viroc}.

%\todo[inline]{todo: (i) and (ii) to be moved earlier}
\begin{enumerate}\label{Cond1}

\item
  \label{semidir4}
   The semiring
   $\mathbb C \rtimes \mathbb R$, where $\mathbb R$
   is equipped with its structure of an additive ordered group,
   was considered in~\cite[Example 2.13]{AGG2}
   (with a different notation). There it was called
   ``the complex semiring extension of the tropical semiring.''
  It constitutes a layered system with $\mathcal{T}_{\mathbb{C}}=\mathbb{C}^*$, the usual negation and
  the equality surpassing relation in $\mathbb C$. Then $\mathbb C \rtimes \mathbb R$ is
  equipped with the induced negation and
  the surpassing relation $\preceq_{\circ}$.

As explained in \cite[Example 2.13]{AGG2}, an element $(\ell; a) \in
\mathbb C \rtimes \mathbb R$ encodes the
``asymptotic series'' $\ell t ^{-a} + o(t^{-a}),$ when $t$ goes to
$0^+$.  Indeed, the ``lexicographic'' rule in the addition of
$\mathbb C \rtimes \mathbb R$ corresponds
precisely to the addition of asymptotic series, and the
component-wise product of  $\mathbb C \rtimes \mathbb
R$ corresponds to the product of asymptotic
series.

Another way to get this layered system is to consider the hyperfield
defined as follows. Take   $K: =\mathbb{C}\{\{t^{\mathbb{R}}\}\}$
the field of generalized Puiseux series with real exponents and
complex coefficients, and $G$ the subgroup consisting of series of
leading exponent $0$ and leading coefficient $1$. Then, a non-zero
element of $\kfield/hgroup$ is the coset of an element $\ell t^{-a}$ with
$\ell\in \mathbb{C}^*$ and $a\in \mathbb{R}$,
which can be identified to the element $(\ell,a)\in \mathbb C\rtimes \mathbb R$.
 %% Note that $(0, \zero) \notin \mathbb C \rtimes \mathbb
%% R_{\operatorname{max}} $.
Then the hypersystem of the hyperfield $\kfield/hgroup$ is isomorphic to
$\mathbb C\rtimes \mathbb R$, by sending the coset of $\ell t^{-a}$
to $(\ell,a)$, when $\ell\neq 0$, and by sending the set
$t^{-a}\hgroup-t^{-a}\hgroup$, which is the set of series with valuation
strictly greater than $a$, to $(0,a)$.

%%  Passing to hyperfields, one identifies
%% $(1, \zero) + (-1, \zero)$ with $\{ (r,a): r \in \mathbb C \rtimes , a
%% <0\}$.

%% By taking  $\mathbb R ^+ \subset \mathbb C$
%% consisting of sums of squares, we end up with the semiring
%% $\mathbb R ^+   \rtimes \mathbb R$. This can be
%% done for any formally real field and its sub-semiring of sums of
%% squares. \todo{SG: to be discussed}

 %% Viro has linked this example  to tropicalization by taking $R$ to be
 %% the field~$ \mathbb C\{\{t\}\}$ of real-powered Puiseux series on
 %% the variable $t$,  and $G$ to be those series whose leading
 %% exponent is a positive real number and  whose leading coefficient is 1. Then $R/G$ is tied in with
 %% the phase hyperfield. Namely, $ a+b$ is the closed sector between
 %% $\angle a $ and $\angle b ,$ because   adding a series of   $\exp(i
 %% \alpha) (*)$ to $\exp(i \beta)(**),$     can yield anything in the
 %% closed arc $[\alpha,\beta]$. Because of  the non-archimedean
 %% character, when (**) is negligible with respect to (*) we see that
 %% from $\bar \one = 1G$ to $\bar \one$ the original angle could be
 %% achieved by a term of lower order (due to cancelation of the leading
 %% monomial) and thus could be anything.%; thus,  $R/G$ is not doubly
%distributive.

%\todo[inline]{It looks like is a hyperfield obtained by a quotient by a multiplicative group}

\item
  Let $\phase=\overline{\Hy}$ denote the hypersystem
  of the phase hyperfield $\Hy$, defined in~\Cref{ex-phase}, and consider
  the  layered system $(\phase\setminus\{0\})\rtimes \R$
  obtained with the
ordered monoid $(\R,+,\leq)$.
Using \Cref{AGGGexmod2}, we recover the hyperfield system of  the phased tropical hyperfield (see \Cref{ex-tropicalcomplex}).

Recall that $\phase$ is not a semiring with the hyperfield multiplication. The same holds for
 the  layered hyperfield system $(\phase\setminus\{0\})\rtimes \R$ obtained in this way.

\item  Let  $\phase^{\textrm{w}}=\overline{\Hy}$ denote the hypersystem
  of the weak phase hyperfield $\Hy$, of \Cref{ex-phasealt}.
  We can  consider the extension
  $(\phase^{\textrm{w}}\setminus\{0\}) \rtimes \R$,
   with the ordered monoid $(\R,+,\leq)$.
   Using \Cref{AGGGexmod2},
   we recover the hyperfield system of the hyperfield of complex numbers
  defined by Viro~(see \Cref{ex-viroc}).

  \item  A  variant of the phased tropical hyperfield
  and the hyperfield of complex numbers of Viro
  is \textbf{the phased extension of the tropical semiring},
  proposed in \cite[Ex.\ 2.22]{AGG2}.
Its definition coincides with the one of $L\rtimes \R$
in  \Cref{AGGGexmod1}, with $L=\mathcal A\setminus\{\zero\}$, in which
 $\mathcal A$ is the
phase semiring defined in~\cite[Ex.\ 2.22]{AGG2}.
Since this semiring coincides with the hypersystem
obtained from $\phase$ by using the  multiplication $\cdot$ in~\eqref{mult-doub}, we get that
 the variant of \cite[Ex.\ 2.22]{AGG2}
coincides with the layered system $(\phase\setminus\{0\})\rtimes \R$
defined as in \Cref{AGGGexmod1}.

It is also similar to the hypersystem extension $(\phase^{\textrm{w}}\setminus\{0\}) \rtimes \R$, defined using \Cref{AGGGexmod1},
except for the addition of two opposite elements.

   This variant has the advantage of being an semiring and a layered
system.

  %% Then, the tropical analogs of complex numbers given in
  %% \Cref{ex-tropicalcomplex,ex-viroc}
  %% are respectively similar but different from
  %% the layered systems  $\phase \setminus\{0\} \rtimes \R$ and
  %% $\phase^{\textrm{w}}\setminus\{0\} \rtimes \R$,
  %% of $\phase $ and $\phase^{\textrm{w}}$,
  %% with the ordered monoid $(\R,+,\leq)$,
  %% using layered systems (\Cref{AGGGexmod1}).
  %%  Indeed, the addition of two elements $a$ and $b$ coincide, except
  %%  when $a=-b$, and the multiplication coincide for all elements.

%%   For
%% example, one may take the product in the phase hyperfield to be the
%% convex hull of \eqref{hypmul}. The power set is a semiring in
%% \cite{AGG2} but fails to be
%%  distributive in \cite{Vi}.

% \item  Here is an intriguing variation of the phase hyperfield that leads
%to complications. One can take $a+a =a$ and $a-a = \{ a, -a,
%\zero\},$ but the sum of two distinct non-antipodal points is the
%shortest
% open arc joining them. Thus $(1+i)+ (1-i)$ is the higher open arc from
% $-i$
% to $i$, whereas $(1+1) + (-i+i)$ does not contain the point 1. In
% other words, this version is not
%associative!% On the other hand, if we define $1+1 = \zero$ or $1+1 =
%%\emptyset$ then we get into trouble with $1+1+i.$
%\end{enumerate}
\end{enumerate}
\end{example}

\section{Matroids over systems} \label{sec-mat-sys}$ $
Dress  introduced matroids over fuzzy rings in
 \cite{Dr}, see  \cite[Definition~1.1]{DrW}, as a
vehicle for unifying and studying matroids.
 M.~Baker and O.~Lorscheid \cite{BL}
 developed a more general setting of matroids over ``blueprints''.
Here we discuss the notion of matroids in conjunction with systems, while generalizing the notion already in the case of fuzzy rings or blueprints as follows, in order to obtain more representable matroids.

 %% including matroidsreformulated their theory in the more straightforward case of semigroup
 %% algebras.
% TO complete the picture, we  discuss briefly
 %\todo[inline]{L: Yes, this is one of the main points. SG: %comparison with Baker and Lorscheid is delicate, our setting %is different, because the matroid map can be ghost.}
 %% , and show
 %% that it encompasses the notion of In this brief discussion we show that systems observe how this set-up applies %
 %% naturally to systems.
\subsection{$\mathcal{A}$-matroids and Grassmann-Plücker maps}
 \begin{definition}\label{mat0}
A \textbf{Grassmann-Pl\"ucker map} of degree $m$ with ground set $E$ and values in a semiring
system $(\mathcal A, \tT,(-), \preceq)$
is a map $b:E^m\to\mathcal{A}$
 satisfying the following
conditions:
\begin{enumerate}
 \item \label{mat0-i} There exist $e_1, \dots, e_m \in E$ with  $b(e_1, \dots, e_m)
 \in \tT;$
 \item  \label{mat0-ii} $b(e_1, \dots, e_m) \in {\mathcal A}_{\Null}$ whenever
   $e_i = e_j$ for  some $i\ne j$;
   \item  \label{mat0-iii}
     $b(e_{\pi(1)}, \dots, e_{\pi(m)})= (-)^{\sgn(\pi)} b(e_1, \dots,
 e_m)\quad \text{ for every permutation } \pi \in S_m.$

  \item  \label{mat0-iv}Any $e_0,e_1, \dots, e_m, e_2', \dots, e_m'$ satisfy the
 \textbf{Grassmann-Pl\"ucker relations}
 \begin{equation}\label{Plucker}
  \sum_{i=0}^m  (-)^i  b(e_0,e_1, \dots, e_{i-1}, e_{i+1},
 e_m)b(e_i,e_2',  \dots,
 e_m') \in {\mathcal A}_{\Null},
\end{equation}
\end{enumerate}
An \textbf{$\mathcal{A}$-matroid} is an equivalence class of such maps, with two maps $b_1,b_2$ called
\textbf{equivalent} if there are $a_1,a_2 \in \tT$ such that $a_1
b_1 = a_2b_1,$
%(Compare with
%\cite{GJL,BL}, a difference being that the matroid map .)

  A \textbf{base} for an $\mathcal{A}$-matroid is a set $\{e_1, \dots, e_m \}$
  such that $b(e_1, \dots, e_m) \in \tT.$
%%   \cite[Definition~4.2]{DW} has a slight variant: The maps  $b: E^m
%% \to {\tT} $ depend on the order of the entries, and satisfy the
%% condition
%%   $$b(e_{\pi(1)}, \dots, e_{\pi(m)})= (-)^{\sgn(\pi)} b(e_1, \dots,
%%  e_m)\quad \text{ for every permutation } \pi \in S_m.$$
    \end{definition}
 In contrast with~\cite[Definition~4.2]{DrW} and \cite{GJL,BL}, we allow
 $b$ to take values in all of ${\mathcal A}$ whereas in~\cite{DrW},
 the values of $b$ are required to be invertible or zero;
 i.e., the definition of~\cite{DrW} requires
 the values of $b$ to be in $\mathcal{T}\cup\{\zero\}$.
 The same kind of restriction is made in~\cite{GJL},
 and also in the definition of matroids over tracts, which includes
 matroids over idylls~\cite{BL}.

For any fuzzy ring $K$ with distinguished ideal $I$,  define $K' =
\Net[K^\times].$ Then the identity map on $K^\times$ induces an
additive semigroup map $f: K' \to K$, and one defines $I' =
f^{-1}(I)$. This defines a new fuzzy ring having the same
matroids.    Baker and Lorscheid \cite[Prop.~3.6]{BL} have proved that all
matroids represented over a fuzzy ring can be recovered in this way.

 Explicit examples of $\mathcal{A}$-matroids arise from the following construction. Given a $m\times n$ matrix $A = (a_{i,j})$, and
 a $m$-uple $(i_1,\dots,i_m)$ of elements of $\{1,\dots,n\}$, we define  $p_A(i_1,\dots,i_m)$ to be the value of the determinant of the $m\times m$ matrix consisting of the columns $i_1,\dots,i_m$ of $A$.
 Note that the usual formula defining the determinant
 makes sense in any semiring system.

 %{\it track} of $A$ to be $\tilde a = a_{1,\pi(1)}\dots a_{m,\pi(m)}$ for a permutation~$\pi.$
 %Assume that $\leq_\circ$ is a total order on $\mcA^\circ.$ The {\it value}
 %of a minor is a maximal value (with respect to $\leq_\circ$) of its tracks.

\begin{proposition}\label{Lap7}
%(As in \cite[Theorem~4.3]{DrW}).
  Suppose $(\mathcal A, \tT,(-),\preceq)$
  is a semiring system. Let $E:=\{1,\dots,n\}$,
  and let $A$ be a $m\times n$ matrix with entries
  in $\mathcal{A}$, with at least one maximal minor
  with values in~$\tT$.
  Then the map $p_A:E^m\to\mathcal{A}$ is a Grassmann-Pl\"ucker map
  of degree $m$.
%%   obtained by selecting the columns For $n \ge m,$ consider
%%   a collection $e_1,\dots,e_m
%%   a set $E\subset {\mathcal{A}}^m$ of cardinality $n$, and the map
%%   from $E^m$ to $\mathcal{A}$ which associates
%%   to a family of vectors their determinant in $\mathcal{A}$.
%% Then, this map is a  any finite sets of $n$ vectors of
%% length $m$  over
%% having a nontrival $m\times m$ minor has a  Grassmann-Pl\"ucker map
%% with values in ${\mathcal A}$, given by the determinant.
\end{proposition}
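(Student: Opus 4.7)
The plan is to define $b$ via the Leibniz formula for the determinant using the negation map of the triple to carry the sign, and then verify the four conditions of \Cref{mat0} in order. For an ordered tuple $(i_1,\dots,i_m)\in E^m$, set
\[
b(i_1,\dots,i_m)=\sum_{\sigma\in S_m}(-)^{\sgn\sigma}\prod_{k=1}^m A_{k,i_{\sigma(k)}},
\]
where $(-)^{+1}=\mathrm{id}$ and $(-)^{-1}=(-)$. Condition~\eqref{mat0-i} is immediate from the hypothesis that some maximal minor of $A$ lies in $\tT$. Condition~\eqref{mat0-iii} follows by the usual reindexing trick: substituting $\tau=\pi\sigma$ and using $\sgn(\pi\sigma)=\sgn\pi\,\sgn\tau$ and the involutive character of $(-)$ factors $(-)^{\sgn\pi}$ outside the sum.

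For \eqref{mat0-ii}, suppose two arguments coincide, say $i_p=i_q$ with $p<q$, and let $\kappa=(p\;q)\in S_m$. Pair each $\sigma$ with $\sigma\circ\kappa$: the two permutations have opposite signs and yield the same product $\prod_k A_{k,i_{\sigma(k)}}$ (because $i_p=i_q$). Each pair contributes a term of the form $x+(-)x=x^\circ\in\mathcal A^\circ$, so the whole sum lies in $\mathcal A^\circ\subseteq \mathcal A_{\Null}$, since $\mathcal A^\circ$ is a $\tT$-submodule.

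The main obstacle is condition~\eqref{mat0-iv}, the Grassmann--Pl\"ucker relation. The strategy is to reduce it to a universal polynomial identity. Classically, over $\Z$ (and hence over any commutative ring, viewed via the unique morphism $\Z\to R$), the Plücker sum
\[
\sum_{i=0}^m(-1)^i\det(A^{(i)})\det(B^{(i)}),
\]
with $A^{(i)}$ formed from columns $(e_0,\dots,\hat e_i,\dots,e_m)$ and $B^{(i)}$ from $(e_i,e_2',\dots,e_m')$, is identically zero as a polynomial in the matrix entries $A_{k,\ell}$. Expanding the two determinants in the same sum via the Leibniz formula yields a triple sum indexed by $(i,\sigma,\tau)$ of signed products of the $A_{k,\ell}$'s. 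Collecting like monomials, the identity over $\Z$ asserts that for every fixed monomial $M$ the signed multiplicity equals zero: the number $n_+(M)$ of triples producing $+M$ equals the number $n_-(M)$ producing $-M$.

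Transporting this bookkeeping into the semiring triple, the terms producing $+M$ contribute $n_+(M)\cdot M$ and the terms producing $-M$ contribute $n_-(M)\cdot (-)M$. Since $n_+(M)=n_-(M)=:n(M)$, the total contribution of $M$ equals $n(M)(M+(-)M)=n(M)\,M^\circ\in\mathcal A^\circ$. Summing over all monomials and using that $\mathcal A^\circ$ is a $\tT$-submodule closed under $(-)$ and under the semiring addition, the total Plücker sum lies in $\mathcal A^\circ\subseteq\mathcal A_{\Null}$, establishing \eqref{mat0-iv}. The delicate point to check carefully is that the expansion-and-regrouping argument only invokes (double) distributivity and commutativity of addition/multiplication in~$\mathcal A$, together with the $\tT$-compatibility of $(-)$, which are all part of the semiring-triple axioms; in particular one never needs to cancel, so the classical identity transports without modification beyond replacing equality to zero with membership in~$\mathcal A^\circ$.
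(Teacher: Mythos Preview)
Your proof is correct. It follows essentially the same approach as the paper, which simply invokes the ``strong transfer principle'' of \cite{AGG1} (or alternatively \cite[Lemma~3]{ReS}); your monomial-counting argument for condition~\eqref{mat0-iv} is precisely the explicit content of that transfer principle applied to the Pl\"ucker identity, showing that a polynomial identity over $\Z$ transports to membership in $\mathcal A^\circ\subseteq\mathcal A_{\Null}$ in any semiring triple.
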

\begin{proof}
  The strong transfer principle established in~\cite[Th.~3.4]{AGG1},
  based on an idea of Reutenauer and H.~Straubing~\cite[Lemma 3]{ReS},
  states that every classical polynomial identity, valid over rings, admits
  an analogue, valid over semirings, in which the equality relation
  is replaced by a surpassing relation. Hence, since in the special case of rings,
  the map $b$ satisfies the formula~\eqref{Plucker},
the same is true in the present generality.
\end{proof}

The $\mathcal{A}$-matroids provided
by \Cref{Lap7} could be called ``systemically representable.'' Some
interesting aspects of representability of matroids can be found in
\cite{IRh} in conjunction with \cite[Theorem~5.1.1]{RhS}. \cite{IRh}
shows that a significant class of simplices are not
Boolean-representable, cf.~\cite[Example~5.1.3]{RhS}, and
cf.~\cite[Example~5.2.12]{RhS} is a matroid which is not
Boolean-representable.

%???\todo[inline]{Below the example of stephane}
The following two examples illustrate the notion of matroids over semiring systems. \Cref{mat0} allows for Grassman-Pl\"ucker coordinates
to be {\em nontangible}, and this is helpful to carry information about tropically degenerate intersections.
\begin{example}
  Consider the following matrix over the tropical semiring
%\begin{align} A^\top= \left(\begin{array}{ccc}
 % 0 & 0 & 0 \\
 % 0 & -1 & 0 \\
 % 2 & 0 & -1\\
 % 3  & 3 & 0
 % \end{array}\right)
 % \label{e-deg-trop}
%\end{align}
\begin{align} A = \left(\begin{array}{cccc}
0 & 0 & 2 & 3\\
0 & -1 & 0 & 3\\
0 & 0 & -1 & 0
  \end{array}\right)
  \label{e-deg-trop}
\end{align}
It describes the arrangement of tropical lines $L_1,\dots,L_4$
shown in~\Cref{fig-arrang-deg}, in which $L_i$ is determined
by the $i$th row of $A^\top$.
For instance, the fourth row of $A^\top$
encodes the tropical line
\[
L_4: \; \max(3+x_0,3+x_1,x_2) \; \text{achieved twice.}
\]
The tropical Grassmann-Pl\"ucker vector of $A$, interpreted
over the standard supertropical system, is shown on the figure.
%% given by
%% \[
%% p_{1,2,3} = 2 , \; p_{1,2,4}= 3^\nu, \; p_{1,3,4} = 5, \; p_{2,3,4}=5 \enspace .
%% \]
Owing to the Cramer theorem over the supertropical semiring (see~\cite[Lemma~5.1]{theobald_first_steps}, \cite{IzhakianRowen2009TropicalRank} or~\cite[Theorem~6.2]{AGG2}),
the tangible character of $p_A(1,2,3)$ (the fact that $\{1,2,3\}$ is a basis)
indicates that the three
lines $L_1,L_2,L_3$ do not intersect. Likewise for the tropical lines
$L_1,L_3,L_4$ and $L_2,L_3,L_4$. However, the ghost nature
of the Pl\"ucker coordinate $p_A(1,2,4)$ indicates that $L_1,L_2,L_4$ do intersect.
\end{example}

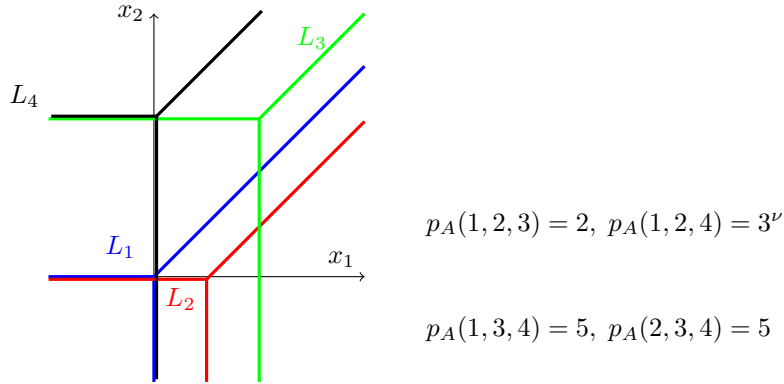
\begin{figure}
\begin{center}
  \begin{tikzpicture}[scale=0.7]
        \coordinate (xaxisleft) at (-2,0);
    \coordinate (xaxisright) at (4,0);
    \coordinate (yaxisbot) at (0,-2);
    \coordinate (yaxistop) at (0,5);
    \draw[->] (xaxisleft) --(xaxisright);
    \draw[->] (yaxisbot) --(yaxistop);
    \node[above left=0pt of {(4,0)}] {$x_1$};
         \node[left=0pt of {(0,5)}] {$x_2$};
    \coordinate (A) at (0,0);
    \coordinate (Ab) at (0,-2);
    \coordinate (Al) at (-2,0);
    \coordinate (Au) at (4,4);
    \node[above left=5pt of {(0,0)}] {\color{blue}$L_1$};
    \coordinate (B) at (1,0-0.05);
    \coordinate (Bb) at (1,-2);
    \coordinate (Bl) at (-2,0-0.05);
    \coordinate (Bu) at (4,3-0.05);
        \node[below left=1pt of {(1,0)}] {\color{red}$L_2$};
    \coordinate (C) at (2,3);
      \coordinate (Cb) at (2,3-5);
    \coordinate (Cl) at (3-5,3);
    \coordinate (Cu) at (2+2,3+2);
        \node[above=3pt of {(2+1,3+1)}] {\color{green}$L_3$};
    \coordinate (D) at (0+0.05,3+0.05);
    \coordinate (Db) at (0+0.05,3-5+0.05);
    \coordinate (Dl) at (0+0.05-2,3+0.05);
    \coordinate (Du) at (0+0.05+2,3+2+0.05);
        \node[above left=0pt of {(0-2,3+0.05)}] {$L_4$};
    \draw[very thick,draw=blue] (A) --(Ab);
    \draw[very thick,draw=blue] (A) --(Al);
    \draw[very thick,draw=blue] (A) --(Au);
    \draw[very thick,draw=red] (B) --(Bb);
    \draw[very thick,draw=red] (B) --(Bl);
    \draw[very thick,draw=red] (B) --(Bu);
    \draw[very thick,draw=green] (C) --(Cb);
    \draw[very thick,draw=green] (C) --(Cl);
    \draw[very thick,draw=green] (C) --(Cu);
    \draw[very thick] (D) --(Db);
    \draw[very thick] (D) --(Dl);
    \draw[very thick] (D) --(Du);
    \node[right=0pt of {(5,1)}] {$p_A(1,2,3) = 2 , \; p_A(1,2,4)= 3^\nu$};
     \node[right=0pt of {(5,-1)}] {$p_A(1,3,4) = 5, \; p_A(2,3,4)=5$};
  \end{tikzpicture}
\end{center}
\caption{The degenerate arrangement of tropical lines associated to the matrix~\eqref{e-deg-trop}, and the corresponding Grassman-Plücker coordinates over the supertropical system.}
%The multiple  intersection corresponds to a ghost Pl\"ucker coordinate over the supertropical system.}
\label{fig-arrang-deg}
\end{figure}
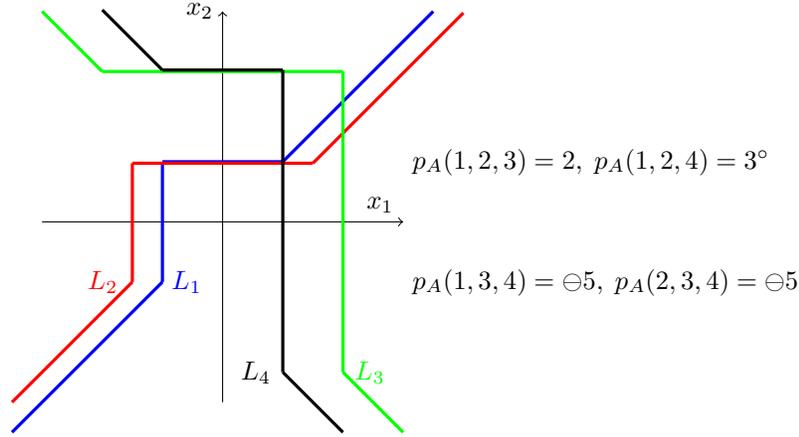
\begin{figure}
\begin{center}
  \begin{tikzpicture}[scale=0.4]
    \coordinate (xaxisleft) at (-6,0);
    \coordinate (xaxisright) at (6,0);
    \coordinate (yaxisbot) at (0,-6);
    \coordinate (yaxistop) at (0,7);
    \draw[->] (xaxisleft) --(xaxisright);
    \draw[->] (yaxisbot) --(yaxistop);
        \node[above left=0pt of {(6,0)}] {$x_1$};
         \node[left=0pt of {(0,7)}] {$x_2$};

    \coordinate (A) at (2,2);
%    \coordinate (Ab) at (0,-2);
    \coordinate (Al) at (-2,2);
    \coordinate (Au) at (7,7);
    \coordinate (Aau) at (-2,-2);
    \coordinate (Aaau) at (-7,-7);
    \node[right=0pt of {(-2,-2)}] {\color{blue}$L_1$};
    \coordinate (B) at (3,2-0.05);
%    \coordinate (Bb) at (1,-2-0.05);
    \coordinate (Bl) at (-3,2-0.05);
    \coordinate (Bu) at (8,7-0.05);
        \coordinate (Bau) at (-3,-2);
    \coordinate (Baau) at (-7,-6);

        \node[left=2pt of {(-3,-2)}] {\color{red}$L_2$};
    \coordinate (C) at (4,5);
      \coordinate (Cb) at (4,-5);
      \coordinate (Cl) at (-4,5);
      \coordinate (Cla) at (-6,7);
       \coordinate (Cba) at (6,-7);
%    \coordinate (Cu) at (2+2,3+2);
        \node[right=1pt of {(4,-5)}] {\color{green}$L_3$};
    \coordinate (D) at (2,5+0.05);
    \coordinate (Db) at (2,-5);
    \coordinate (Dl) at (-2,5+0.05);
    \coordinate (Dba) at (4,-7);
    \coordinate (Dla) at (-4,7+0.05);
%    \coordinate (Du) at (0+2,3+2+0.05);
        \node[left=1pt of {(2,-5)}] {$L_4$};
%    \draw[very thick,draw=blue] (A) --(Ab);
        \draw[very thick,draw=blue] (A) --(Al);
        \draw[very thick,draw=blue] (Al) --(Aau);
                    \draw[very thick,draw=blue] (Aau) --(Aaau);
    \draw[very thick,draw=blue] (A) --(Au);
 %   \draw[very thick,draw=red] (B) --(Bb);
    \draw[very thick,draw=red] (B) --(Bl);
    \draw[very thick,draw=red] (B) --(Bu);
    \draw[very thick,draw=red] (Bl) --(Bau);
            \draw[very thick,draw=red] (Bau) --(Baau);
    \draw[very thick,draw=green] (C) --(Cb);
    \draw[very thick,draw=green] (C) --(Cl);
    \draw[very thick,draw=green] (Cl) --(Cla);
        \draw[very thick,draw=green] (Cb) --(Cba);
%    \draw[very thick,draw=green] (C) --(Cu);
    \draw[very thick] (D) --(Db);
    \draw[very thick] (D) --(Dl);
        \draw[very thick,draw=black] (Dl) --(Dla);
        \draw[very thick,draw=black] (Db) --(Dba);

        %    \draw[very thick] (D) --(Du);

    \node[right=0pt of {(6,2)}] {$p_A(1,2,3) = 2 , \; p_A(1,2,4)= 3^\circ$};
     \node[right=0pt of {(6,-2)}] {$p_A(1,3,4) = \ominus 5, \; p_A(2,3,4)= \ominus 5$};

    \end{tikzpicture}
\end{center}
\caption{The degenerate arrangement of signed tropical lines associated to the matrix~\eqref{e-deg-trop-signed}, and the corresponding Grassman-Plücker coordinates over the system associated to the signed tropical hyperfield.}
\label{fig-arrang-deg-signed}
\end{figure}

\begin{example}
  Consider now the matrix
  \begin{align}
    A= \
 \left(\begin{array}{cccc}
 \ominus 0 & \ominus 0 & \ominus 2 & \ominus 3\\
 \ominus 0 & \ominus (-1) &  0 & 3\\
 0 & 0 & (-1) & 0
\end{array}\right) \enspace .
\label{e-deg-trop-signed}
\end{align}
with entries in the signed tropical semiring
(\Cref{ex-signedtrop}).
It determines the arrangement of signed tropical lines:
\[
L_1 : \ominus x_0 \ominus x_1 \oplus x_2 \nabla \zero,
\quad
L_2 : \ominus x_0 \ominus (-1)x_1 \oplus x_2 \nabla \zero,
\quad
L_3 : \ominus 2x_0 \oplus x_1 \oplus (-1)x_2 \nabla \zero,
\quad
L_4 : \ominus 3x_0 \oplus 3x_1 \oplus x_2 \nabla \zero \enspace,
\]
shown in~\Cref{fig-arrang-deg-signed}. Here, we draw only
the ``signed'' part of the tropical line, i.e., the signed
vectors satisfying the above relations.

The tropical Grassmann-Pl\"ucker vector of $A$, now interpreted
the semiring system associated to the signed tropical hyperfield,
is shown on the figure.
Owing to the Cramer theorem over the symmetrized tropical semiring
(see~\cite{Pl}, and~\cite{AGG1,AGG2} for more recent discussions),
the signed character of $p_A(1,2,3)$ indicates that the three
lines $L_1,L_2,L_3$ do not intersect. So do the tropical lines
$L_1,L_3,L_4$ and $L_2,L_3,L_4$. However, the balanced nature
of the Pl\"ucker coordinate $p_A(1,2,4)$
indicates that $L_1,L_2,L_4$ do intersect.
\end{example}

\begin{rem}\label{vals}
If $\mathbf A$ is a matrix over the field of complex
Puiseux series, then we can consider the matrix  $A:= -\val \mathbf A$ obtained by applying the valuation map~\eqref{e-def-val} to $\mathbf A$ entrywise, and interpret it as a matrix over the supertropical system.
We define the usual Grassman-Pl\"ucker map ${\mp}_{\mathbf{A}}$ and
the supertropical Grassman-Pl\"ucker map $p_A$,
as in \Cref{Lap7}.
Then, for all $(i_1,\dots,i_m)\in E^m$,
%the usual Grassman-Pl\"ucker map
%$p_{\mathbf{A}}$, defined as in~\Cref{Lap7}, interpreted with the field
%operation, and supertropical Grassman-Pl\"ucker map $p_{A}$, where
% interpre in the standard supertropical system
we have
\[
-\val (\mathbf p_{\mathbf A}(i_1,\dots,i_m)) \preceq p_A(i_1,\dots,i_m) \enspace.
\]
More generally, if $\mathbf A$ is a matrix over a field $\kfield$
with multiplicative subgroup $\hgroup$, and if $\pi$ denotes the canonical map
from $\kfield$ to $\kfield/\hgroup$ thought of as a hypersystem, in which
$\preceq$ is interpreted as inclusion, we have
\[
\pi (\mathbf p_{\mathbf A}(i_1,\dots,i_m)) \preceq p_A(i_1,\dots,i_m) \enspace.
\]
%Moreover, let us consider the real closed field of real
%Puiseux series,  with the signed valuation
%$\sval$ defined as the valuation with the additional information of sign.
%Then, the above formula also holds by replacing $\val$ by $\sval$,
%and the supertropical system by the signed tropical system.
%Let us denote by $\sval$ the signed valuation defined as
%Similarly, we can consider the matrix  $A\coloneq \val \mathbf A$ gotten by applying the signed valuation map to $\mathbf A$ entrywise, and interpret it as a matrix over the supertropical system.
%onsidering the signed valuation
%and that if $\mathbf A$ is a matrix over the field of complex Puiseux series, then in the signed tropical system
%\todo{???SG: check}\todo{Are you doing this?}
%\[ \sval (\mathbf p_I[\mathbf A]) \preceq p_I(\val \mathbf A) \enspace. \]
\end{rem}
\begin{rem}
In the definition of valuated matroid, \cite[Definition~1.1]{DrW}
replaces Condition \eqref{mat0-iii} of \Cref{mat0} by
$b(e_{\pi(1)}, \dots, e_{\pi(m)})=  b(e_1, \dots,
 e_m)\quad \text{ for every permutation } \pi \in S_m.$
 Let $\binom {E}m$ denote the set of (unordered) subsets of
    $m$ elements of $E$. Now  the  maps $ b:
E^m \to  {\mathcal A} $ can be viewed more concisely as maps $ b:
\binom {E}m \to  {\mathcal A} $. This is the same as
\Cref{mat0}~\eqref{mat0-iii}  when $(-)$ is of the first
 kind.
\end{rem}
\subsection{A useful $\tT$-pre-order on $(-)$-bipotent triples}\label{preo12}$ $

In order to interpret~\Cref{mat0} in terms of {\em exchange relations},
and to compare it with~\cite{DrW}, we  equip $\mathcal{A}$ with
a new pre-order.

\begin{definition}\label{circord0}
  Suppose  $(\mathcal{A},\tT,(-))$ is a $(-)$-bipotent cancellative triple. Recall by \Cref{circint3} that for any $b\in\mcA$, $b=m b_\tT $ is a uniform presentation  of $b$ (where $b_\tT$ is unique when $m\neq 2$ and $b_\tT^\circ=b$ when $m=2$). The $\circ$-\textbf{pre-order} is defined as   $b_1\le_\circ b_2$ if $(b_1)_\tT ^\circ \preceq _\circ(b_2)_\tT^\circ $, where $\preceq_\circ$ is the $\circ$-surpassing relation.
\end{definition}

\begin{lem}\label{leq-order00}
Assume that $(\mathcal{A},\tT,(-))$ is a $(-)$-bipotent cancellative
triple, and $c \leq_\circ
c'$ for $c,c' \in \mcA$.
 If  $c_\tT^\circ \ne  {c'_\tT}^\circ ,$ then $c+ c' = c',$ and $(-)c +c' = c'.$
 \end{lem}
\begin{proof}   By assumption $c_\tT ^\circ\ne  {c'_\tT}^\circ$,
and  $c_\tT^\circ \preceq_\circ {c'_\tT}^\circ ,$
so  $c_\tT^\circ +d^\circ={c'}_\tT^\circ$  for some $d\in \mcA\setminus\{\zero\}$.
If   $d_\tT ^\circ = c_\tT ^\circ,$ then
${c'}_\tT^\circ$ is a sum of copies of $c_\tT^\circ$, so by
uniqueness of the presentation of  ${c'}_\tT^\circ$,
see \Cref{prop-uni}, we get ${c'_\tT} ^\circ = c_\tT ^\circ,$ a
contradiction. Thus $d_\tT ^\circ \neq c_\tT ^\circ$, so in particular $d_\tT \neq (\pm) c_\tT.$
By \Cref{neg3}, we have
 $c_\tT+ d_\tT$ is either $c_\tT$ or $d_\tT.$ Hence  $c_\tT+ d$ is either $c_\tT$ or $d$, and so ${c_\tT'}^\circ = c_\tT^\circ+ d^\circ$ is either $c_\tT^\circ$ or $d^\circ.$ In the
former case we have
%$c_\tT+ d=c_\tT$, so ${c'}^\circ_\tT = (c_\tT+d)^\circ = c_\tT^\circ, $
a contradiction.
 In the latter case %$c_\tT+ d=d$, so
 ${c'}^\circ_\tT = d^\circ,$ so
$(c_\tT+ {c'}_\tT)^\circ = {c'}_\tT^\circ.$
Then, applying the same arguments as before with $c'_\tT$ instead of $d$, we deduce that $c_\tT+ c'_\tT=c'_\tT$, and therefore $c+c'=c'$.
The second equality is by symmetry.
\end{proof}

\begin{lem}\label{leq-order1}
Assume that $(\mathcal{A},\tT,(-))$ is a $(-)$-bipotent cancellative triple. Then $\leq_\circ$ is ``almost''
a total order on $\mathcal{A}$ in the following sense.
%$c=A
%$\mathcal{A}$.
  For all $c,c' \in \mcA$, \begin{enumerate}
    \item  \label{leq-order1-1}
 $c\leq_\circ c'$ or $c'\leq_\circ c$;
  \item  \label{leq-order1-2} we have both $c\leq_\circ c'$ and
  $c'\leq_\circ c$, if and only if $c_\tT  = (\pm) {c'_\tT} $.
\end{enumerate}
Moreover, on~$\mathcal{A}^\circ$, $\leq_\circ$ is a total order  which coincides with $\preceq_\circ$.
\end{lem}
\begin{proof} Let $c,c'\in \mathcal A$ and take their uniform presentations $c=mc_\tT$ and  $c'=m'c'_\tT$. If $ c_\tT \ne (\pm) c_\tT'$, then \Cref{neg3} shows that $c_\tT+c'_\tT=c_\tT$ or $c'_\tT$, which implies  $c'_\tT\preceq_\circ c_\tT$ or $c_\tT\preceq_\circ c'_\tT$, that is
$c'\leq_\circ c$ or $c\leq_\circ c'$. Otherwise, we get that $c_\tT^\circ={c'}_\tT^\circ$, so we have both
$c'\leq_\circ c$ and $c\leq_\circ c'$.
%if for instance $c_\tT= c_\tT'$, and if $m\leq m'$ (for the order of integers), then $c+(m'-m) c_\tT= c'$, so $c\leq_\circ c'$. If $m\geq m'$ then $c\geq_0 c'$. The same arguments apply to the case  $c_\tT= (-) c_\tT'$.
This verifies \eqref{leq-order1-1} and the if part of \eqref{leq-order1-2}.

Let us verify the only if part in \eqref{leq-order1-2}. %For the last part, % next assertion,
If $c\leq_\circ c'$ and $c'\leq_\circ c$, then $ c_\tT^\circ= {c'}_\tT^\circ$ by definition, and
%$ c_\tT \ne (\pm) c_\tT'$, then by \Cref{leq-order00}, $c\leq_\circ c'$ implies $
%c^\circ +  {c'}^\circ =  {c'}^\circ,$ whereas $c'\leq_\circ c$ implies $ c^\circ + {c'}^\circ =  c^\circ,$ so  $c^\circ={c'}^\circ$, and
by \Cref{prop-uni}, we get $c_\tT= (\pm) c'_\tT$. %, a contradiction.
This finishes the proof of \eqref{leq-order1-2}.
Applying these properties to the elements of $\mathcal{A}^\circ$,  this shows that $\leq_\circ$ is a total order  on $\mathcal{A}^\circ$.
\end{proof}

\subsection{Exchange relations} $ $

Now we  equip $\mathcal{A}$ with
the $\circ$-pre-order of the previous subsection.

\begin{proposition}\label{Lap700} (Compare with \cite[Theorem~4.3]{DrW}).
Suppose that $(\mathcal A, \tT,(-), \preceq)$ is a $(-)$-bipotent cancellative system, with the $\circ$-pre-order $\le_\circ$ of  \Cref{circord0}.
%of the first kind.
Then any $\mathcal{A}$-matroid  over $E$ satisfies:

   For $e_0, \dots e_m, f_2, \dots , f_m \in
    E$ with $b(e_1, \dots , e_m) b(e_0, f_2, \dots   f_m)\in \tT,$ there exists some
    $i$,
$1 \leq  i \leq m$, with \begin{equation}\label{Plucker1} b(e_1, \dots ,
e_m) b(e_0, f_2, \dots f_m) \le_\circ b(e_0,  \dots , \hat e_i, \dots,
e_m) b(e_i, f_2, \dots ,f_m).
\end{equation}
\end{proposition}
\begin{proof}
The  proof parallels \cite[Lemma~4.6]{DrW}. Namely, let $c$  run
over the elements $$c_i:=b(e_0,e_1, \dots, e_{i-1}, e_{i+1},
 e_m)b(e_i,f_2,  \dots,
 f_m) , \qquad i=0,\dots,m.$$
 %%  $b(e_0, \dots, e_{i-1}, e_{i+1},
 %% e_m)b(e_i,e_2', \dots,
 %% e_m')$, and then over
 %%  $b(e_0, \dots, e_{i-1}, e_{i+1},
 %% e_m)b(e_i,e_2', \dots,
 %% e_m')$.
By \Cref{leq-order1}, there exists $i \in \{0,\ldots , m\}$ such
that $c_i^\circ$ is maximal for this order. This implies that $c_j\leq_\circ
c_i$ for all $j=0,\ldots , m$.

If there exists such an $i$ which is $\geq 1$, then \eqref{Plucker1}
is satisfied, since the left hand side of~\eqref{Plucker1} is $c_0$ and the right hand side is $c_i$.

Otherwise all $i\geq 1$ are such that ${{c_i}^\circ}$ is not
maximal. This implies that $c_0^\circ>_\circ c_j^\circ$,
so $c_0>_\circ (-)^j c_j$ % and $c_0\neq (\pm ) c_j$
 for all $j\geq 1$. Since $c_0\in\tT$, \Cref{leq-order00}
implies that $c_0+(-)^j c_j=c_0$ for all $j\geq 1$. By induction, we
obtain that the sum  in \eqref{Plucker} is equal to $c_0\in \tT$ and
is thus not in ${\mathcal A}_{\Null}$ (since $\preceq$ is a surpassing relation), a contradiction to
\eqref{Plucker}.
%%  Either two maximal such elements in \eqref{Plucker}
%%    are both tangible and quasi-negatives or some dominant summand is not tangible. (We get a
%% quasi-zero in \eqref{Plucker} only when two of the terms are
%% dominant and quasi-negatives, or one of the terms is dominant and
%% non-tangible.) In both cases, \eqref{Plucker1} is satisfied.
\end{proof}
When the values of $b$ belong to $\tTz$, the properties stated
in~\Cref{Lap700} define a valuated matroid in the sense
of~\cite{DrW}.

\begin{rem} Conversely, the property stated in \Cref{Lap700} implies the Pl\"ucker relation~\eqref{Plucker} when the negation map is of the first kind, supposing that $\mathcal{A}$ is strongly $(-)$ bipotent and cancellative (\Cref{circidem}). Indeed, we are in the same situation as  \cite[Theorem~4.3]{DrW}):
if \eqref{Plucker1} holds for all $e_0, \dots e_m, f_2, \dots , f_m \in
    E$, then,
    choosing a maximal summand in~\eqref{Plucker},
    either this element is in $\tT$, and then by  \eqref{Plucker1},
    there must be another maximal summand in~\eqref{Plucker},
    since $\leq$ is a total order (see \Cref{leq-order00}), these two summands are equal,
    and then the  sum in~\eqref{Plucker} is in ${\mathcal A}_\Null$,
    since $\mathcal A$ is strongly bipotent.
    Otherwise, this element is not in $\tT$, and since necessarily
    $\mathcal A$ is shallow (see \Cref{ht2}), then again  the  sum in~\eqref{Plucker} is in ${\mathcal A}_\Null$.
 %where \begin{equation} b(e_1, \dots , e_m) b(e_0, f_2,
%\dots f_m) \le b(e_0,  \dots , \hat e_i, \dots, e_m) b(e_i, f_2,
%\dots ,f_m)
%\end{equation}
%for all choices of $i$
% would imply that
%there must be two  maximal summands in~\eqref{Plucker}, which must be equal,
%o that the  sum in~\eqref{Plucker} is not tangible.

In general, the property stated in~\Cref{Lap700} does not imply the Pl\"ucker
relation. Indeed, the Plucker relations involve the sign, which does not appear
any more in~\eqref{Plucker1}. For instance, an oriented matroid may be thought
of as a matroid over the hypersystem of the hyperfield $F/F_{>0}$
where $F$ is an ordered field, and~\eqref{Plucker1} does not capture
the oriented matroid axioms.
%the right hand  side  of \eqref{Plucker1} (taking the permutation into account)   should have the opposite sign of the left  hand  side.
\end{rem}
%\todo[inline]{SG: complicated/not obvious justification, clearer to say that the converse holds if (-)bipotent of the first kind. Indeed, (-)-bipotent of the first kind should imply fuzzy ring, and then we can apply Dress and Wentzell. In a general version of Prop 5.11, we should have that the right hand side (taking the permutation into account) has opposite sign of the LH of \eqref{Plucker1}.}

\appendix
\section{Abstract $\tTz$-modules}

The definition of systems in \cite{Row21} does not require the monoid $\tT$ to be contained in $\mcA,$ so we review that more general set-up and show how it relates to what we have done until now.
Towards that end, we redefine $\tTz$-modules.

\subsection{$\tTz$-modules}$ $

We assume now that  $(\tTz,\cdot)$ is a
semigroup,
with an absorbing element $\zero_\tT.$

\begin{definition}\label{modu13}
A (left) $\tTz$-\textbf{action} on a monoid $(\mathcal A,+,\zero)$
is a scalar multiplication
$\tTz\times \mathcal A \to \mathcal A$ (denoted also either as $\cdot$ or
concatenation), which satisfies the following properties:
\begin{enumerate}
\item $\zero_{\tTz} \mcA = \zero$ (in $\mcA$).
  \item Compatibility with the multiplication on $\tTz$:
$$(a_1 a_2)b =  a_1 (a_2b),\qquad \text{for all}\; a_i \in
\tTz, \; b   \in \mathcal A \enspace.$$
\item If $\tTz$ has a unit $\one_{\tTz}$, then
$$\one_{\tTz} b = b ,\qquad \text{for all}\; b   \in \mathcal A \enspace.$$
  \item Distributivity,  in the sense that
$$a(b_1+b_2) = ab_1 +ab_1,\qquad \text{for all}\; a \in \tTz,\; b_i \in \mathcal A\enspace .$$
\item
$a\zero=\zero$,
for all $a\in\tTz\enspace .$
\item  $\zero_{\tTz} a = \zero,$ for all $a\in {\mathcal A}.$
\end{enumerate}

A (left) $\tTz$-\textbf{module} % over $\tTz$
is a monoid
$(\mathcal A,+,\zero)$ together with a $\tTz$-action.
(We do not assume now that $\tTz \subseteq \mcA.$)
As before, the left action of $\tTz$ on $\mathcal A$ is \textbf{cancellative} when
    $a c_1 = a c_2$ iff $ c_1 = c_2$, for all $a\in \tTz \setminus \{\zero\},$
$ c_i \in \mathcal A$.
A right  $\tTz$-\textbf{module} is defined analogously. % (A $\tT$-module
%$\mathcal A$ containing $\tT$ is a slightly more general structure
%than a \distributed.)
A  $\tTz$-\textbf{bimodule} is a left and right $\tTz$-module
satisfying  $(a_1 b)a_2 = a_1(b a_2)$ for all $a_i\in \tTz,$ $b\in \mcA.$
\end{definition}

 \subsection{Obtaining triples from $\tTz$-bimodules}

  Towards the end of making  $(\mathcal A,+, \zero_{\mathcal{A}})$  a semiring,
we shall use the following notion.

\begin{definition}\label{embedding0}
 A \textbf{preunit} of a
$\tTz$-bimodule $(\mathcal A,+, \zero)$ is an element $u \in \mathcal
A$ satisfying the following properties $\forall a,a' \in \tTz$:
\begin{enumerate}
 \item $au = ua;$
\item $au = a'u$ implies $a=a';$
\item $\mathcal A \setminus \{ \zero_{\mathcal{A}} \}$ is included in the additive span of $\tTz u= u\tTz$.
\end{enumerate}
\end{definition}

Identifying $a\in \mathcal{T}$ with $au\in \mathcal{A}$, we can think of
$\mathcal{A}\setminus\{\zero_{\mathcal{A}}\}$ as the additive span
of  a distinguished subset~$\tTz$, which
additively spans $\mathcal A$.

%Assume that $\tT$ has no absorbing element.

Extending Proposition~\ref{dc} to $\tT$-bimodules, we have

\begin{proposition}\label{dc1} Suppose that $\mathcal A$ is a $\tTz$-bimodule with a preunit $u$. We can define a semiring structure on $\mathcal A$, whose  unit element will be $u$,
extending the given $\tTz$-bimodule structure,
 defining multiplication  by  \begin{equation}\label{eq2.1}
\left(\sum_i
 a_i u\right)\left(\sum_j a_j'u\right) = \sum_{i,j}
 (a_ia_j')u.\end{equation} If $\mathcal A$ already is a semiring,
 with a multiplication consistent with the
action of $\tTz$ and with unit element $u=\one_{\mathcal A}$,
then the multiplication in $\mathcal A$ coincides with \eqref{eq2.1}.
\end{proposition}
\begin{proof} First notice that the formula is well-defined, since if $\sum_j a'_j u =
\sum_k a''_k u$ then $$\sum_{i,j} (a_i a'_j)u   =
\sum_{i,j} a_i (a'_ju )
=\sum_i a_i
(\sum_j  a'_j u  )
=\sum_i a_i
(\sum_k  a''_k u  )
%=  \sum_i (a_i (\sum_j  a'_j)) u
% is \textbf{uniquely negated over %${\mathcal I}$}  if $\tTz$
=
\sum_{i,k} (a_i a''_k)u ,$$ and likewise if we change the $a_i$.
Now, double distributivity is clear.

 Next, $u$ is the unit element
since for any nonzero element $b=\sum_i a_i u$ of $\mathcal A$,
we have $b u= b (\one_{\tTz} u)= \sum_i (a_i\one_{\tTz})  u=b$
 and similarly $ub=b$. Associativity follows from \eqref{eq2.1} and
the associativity of multiplication in $\tTz$.
%$((au)(a'u))(a''u) = ((aa')u)(a''u) = ((aa')a'')u  = (a(a'a''))u =
%(au)((a'u)(a''u)).$

To show the second part of the proposition, we assume that $\mathcal A$ is a semiring with a multiplication $\times$
consistent with the action of $\tTz$,  and unit $\one_{\mathcal A}$, and that $u=\one_{\mathcal A}$.
Let $a,a'\in \tTz$.
From the consistency of multiplication with the action and the properties of a
preunit, we have $(a u)\times (a'u)=a (u\times (ua'))=a ((u\times u) a')$.
Since $u=\one_{\mathcal A}$, we have $u\times u=u$, which leads to
$(a u)\times (a'u)= a (u a')= a (a'u)=(aa') u$.
Then,  from the distributivity of $\times$ over addition, we
obtain that
$\left(\sum_i  a_i u\right)\times \left(\sum_j a_j'u\right) = \sum_{i,j}
(a_i u)\times (a_j' u)= \sum_{i,j} (a_i a'_j)u$, which means that
the multiplication in $\mathcal A$ coincides with \eqref{eq2.1}.
\end{proof}

We shall also need the following notion.
\begin{definition}\label{modrel}
A subset $\mathcal B$ of the (left) $\tTz$-\textbf{module} $\mathcal A$
is a (left) $\tTz$-\textbf{sub-module} of $\mathcal A$ if it is a sub-semigroup of $\mathcal A$ and if, for all $a\in \tTz$ and $b\in \mathcal B$, we have $a b\in {\mathcal B}$.

\end{definition}

The next definition transports $\tTz$ into $\mcA.$

\begin{definition}\label{int1}
 A  \textbf{pseudo-triple}
is a collection $(\mathcal A, \tTz, (-)),$  where  $\mathcal A$ is a
$\tTz$-bimodule and $(-)$ is a
negation map on  $\mathcal A $.

A $\tTz$-\textbf{pseudo-triple} is a  pseudo-triple where $\tTz \subseteq \mathcal A.$

 A \textbf{triple} $(\mathcal A, \tTz, (-))$ is a $\tTz$-pseudo-triple satisfying:
\begin{enumerate}% \item\label{triple1}
%  $\tTz
%\subseteq \mathcal A$,
 \item \label{triple2}$\tTz$
generates $( \mathcal A,+),$
\item \label{triple3}$ \tTz \cap \mathcal A^\circ = \{ \zero
 \}.$
\end{enumerate}

\end{definition}

\begin{remark}\label{circint1}$ $\begin{enumerate}

\item The basic property \eqref{triple3} of \Cref{int1} holds in uniquely negated $\tTz$-nontrivial $\tTz$-pseudo-triples $(\mathcal A, \tTz, (-))$   when the action
is cancellative, by \cite[Proposition~2.21]{Row21}.

   \item  If $\mathcal A$ is a $\tTz$-(bi)module, then $\mathcal A^\circ$ is a
$\tTz$-sub(bi)module of $\mathcal A$.

\end{enumerate}
\end{remark}

\begin{remark}\label{dd} Let  $\mathcal A$ be a  $\tTz$-bimodule with a preunit $u$.
 If $(-)$ is a negation map on $\mathcal A$, then
 $(\mathcal A, \tTz, (-))$ is a $\tTz u$-pseudo-triple.
 Replacing $\mathcal A$ by the semigroup
 spanned by $\tTz u$ yields condition \eqref{triple2}
 of \Cref{int1}. The (internal) multiplication on $\mathcal{A}$ given by
\Cref{dc} makes $\mathcal{A}$  a semiring. It is a triple if and only if it also satisfies \eqref{triple3}, which is automatic when $\tTz \setminus \{\zero\}$ is a group (barring the degenerate case $\mathcal A^\circ = \mathcal A$;
 if on the contrary $au\succeq \zero$ then $u \succeq \zero,$ so $\mcA \subseteq \mcA_0.$). Moreover,
 in this case,
$(\mathcal A, \tTz, (-))$ is automatically a semiring triple.
% \todo[inline]{SG before `` thereby tying in with \cite{Lor1,Lor2}, although here the negation
%map plays a key role'' rather verify that blueprint are well referred to below}
\end{remark}

\begin{definition}\label{su}$ $
 \begin{enumerate}
 \item A relation $\mathcal R$ on a (left) $\tTz$-module $\mathcal A$
is called a (left) $\tTz$-\textbf{module relation} if it satisfies,
for $a\in \tTz$ and $b_i,b_i' \in \mathcal A$:
\begin{enumerate}
 \item If $b_i {\mathcal R} b_i'$ for $i= 1,2$, then  $b_1 + b_2 {\mathcal R} b_1' + b_2'.$
    \item   If  $b_1 {\mathcal R} b_1'$ then $a b_1 {\mathcal R} ab_1'.$
\end{enumerate}
Right $\tTz$-module relations are defined analogously.

     \item  A \textbf{surpassing relation} on a $\tTz$-pseudo-triple is a pre-order $\tTz$-module relation which reduces
to equality on the set of tangible elements, and for which all quasi-zeros surpass zero.

   \item  A \textbf{system}  $(\mathcal A, \tTz, (-), \preceq)$
is a triple  $(\mathcal A, \tTz, (-))$ together with a
surpassing relation~$\preceq$ satisfying the following property:

$\tTz$ is uniquely quasi-negated  over the submodule   $$
{\mathcal A}_{\Null}: = \{ b \in \mathcal A : b \succeq
\zero\}.$$
\end{enumerate}
\end{definition}

\begin{remark}$ $
%\begin{enumerate}
%   \item
   Note that this definition of surpassing relation does not relate to negation, so we have a category $(\MMod;\preceq)$ of $\tTz$-modules with surpassing relation $(\preceq)$ and their $\preceq$-morphisms. This idea is developed in \cite{AGR1}.
 %   \item
 % Pre-ordered $\tTz$-modules include the
 % {\em ordered blueprints} by  Lorscheid~\cite{Lor1,Lor2},
%  which are of the form %the pre-ordered $\mathcal{T}$-modules
%$\mathcal{A}=\mathbb{N}[\mathcal{T}]$ equipped with an equivalence relation,
%thought of as a pre-order.
%\end{enumerate}
\end{remark}

We can  extend notions from the body of the paper. In analogy to \Cref{func1} we have

\begin{theorem}\label{dou}
Symmetrization provides the  doubling functor
  from the category $\MMod$ of $\tTz$-bimodules  to the subcategory of pseudo-triples, which restricts to the
 doubling functor
  from the category $(\MMod;\preceq)$   to the subcategory of pseudo-triples with surpassing relation. These subcategories are reflective.
\end{theorem}
\begin{proof}
Just as in the proof of \Cref{func1} (i),(ii),(iv),(v). One checks that the symmetrization of a \distributed left $\tT$-bimodule is a pseudo-triple,
and if there is a surpassing relation, it is carried over.

The morphisms of the doubled category are pairs $\hat f =(f_1,f_2):(\widehat{\mcA},\widehat{\mcA_0})\to (\widehat{\mcA'},\widehat{\mcA'})$, where $f_1,f_2:\mcA \to \mcA'$ are morphisms, and we define  $$\hat f(b_1,b_2) = (f_1(b_1)+f_2(b_2),f_2(b_1)+ f_1(b_2)),$$ which is checked routinely to be a morphism.

Given a morphism $f:(\mcA,\mcA_0) \to (\mcA',\mcA'),$
        we define $\hat f:(\widehat{\mcA},\widehat{\mcA_0})\to (\widehat{\mcA'},\widehat{\mcA'})$, by putting
        $\hat f = (f,f).$ In the other direction, one can project onto the first coordinate.
\end{proof}

This   approach also permits us to generalize
\Cref{AGGGexmod1}.
Now the monoid $\tT_L$ only needs to act on $L$,  and $L$ could be a semiring without $\zero.$

 This more general approach also gives a way to view matrices as systems.

 \begin{definition}\label{conv1}$ $
  \begin{enumerate}
      \item If $\mcA$ is an nd-semiring and $S$ is a semigroup, define the \textbf{semigroup nd-semiring}   $\mcA[S] $ to be $\mcA^{(S)}$ endowed with the \textbf{convolution product}  as in \Cref{conv}.
         \item In particular, taking $S = \{ e_{i,j}: 1\le i,j \le n \}\cup \{\zero\}, $ made into a monoid via matrix unit multiplication $$ e_{i,j} e_{k,\ell} = \delta_{j,k}e_{i,\ell},$$
         one can define the matrix nd-semiring $M_n(\mcA).$
         This produces the dominant matrix functor
         (where morphisms, negation maps, and surpassing relations are defined componentwise).
  \end{enumerate}

 \end{definition}

 The polynomial system and matrices can be viewed in terms of direct sums, to be treated more generally in \cite{AGR1}.
%In both instances $(-)$ and $\preceq$ are defined component-wise.

%\begin{remark}
%If $\mathcal{H}$ is a hyperring which is not a ring, then we have the natural polynomial multiring $\mathcal{H}[\la]$,
%but it cannot be a hyperring. Indeed, if $a_1+a_2$
%is not a singleton then $(\la + a_1)(\la + a_2) = \la^2 +(a_1+a_2)\la + a_1a_2$ is not a singleton.\end{remark}

\bibliography{bibliolouis}

\begin{thebibliography}{RGST05}

\bibitem[AGG09]{AGG1}
M.~Akian, S.~Gaubert, and A.~Guterman.
\newblock Linear independence over tropical semirings and beyond.
\newblock In G.L. Litvinov and S.N. Sergeev, editors, {\em Proceedings of the
  International Conference on Tropical and Idempotent Mathematics}, volume 495
  of {\em Contemporary Mathematics}, pages 1--38. American Mathematical
  Society, 2009.

\bibitem[AGG14]{AGG2}
M.~Akian, S.~Gaubert, and A.~Guterman.
\newblock Tropical {C}ramer determinants revisited.
\newblock In G.L. Litvinov and S.N. Sergeev, editors, {\em Proceedings of the
  International Conference on Tropical and Idempotent Mathematics}, volume 616
  of {\em Contemporary Mathematics}, pages 1--45. American Mathematical
  Society, 2014.

\bibitem[AGR23]{AGR1}
M.~Akian, S.~Gaubert, and L.~Rowen.
\newblock Linear algebra over $\tt$-module pairs with a surpassing relation.
\newblock 2023.

\bibitem[AGT23]{AGT}
Marianne Akian, Stephane Gaubert, and Hanieh Tavakolipour.
\newblock Factorization of polynomials over the symmetrized tropical semiring
  and descartes' rule of sign over ordered valued fields, 2023.
\newblock \href{https://arxiv.org/abs/2301.05483}{arXiv:2301.05483}.

\bibitem[BB15]{bruguiere-dominant}
A.~Brugui{\`e}res and Sebastian Burciu.
\newblock On normal tensor functors and coset decompositions for fusion
  categories.
\newblock {\em Appl. Categ. Struct.}, 23(4):591--608, 2015.

\bibitem[BB19]{BB2}
M.~Baker and N.~Bowler.
\newblock Matroids over partial hyperstructures.
\newblock {\em Advances in Math.}, 343:821--863, 2019.

\bibitem[BL21a]{BL2}
M.~Baker and O.~Lorscheid.
\newblock Descartes' rule of signs, {N}ewton polygons, and polynomials over
  hyperfields.
\newblock {\em J. Algebra}, 569:416--441, 2021.

\bibitem[BL21b]{BL}
M.~Baker and O.~Lorscheid.
\newblock The moduli space of matroids.
\newblock {\em Adv. Math.}, 390:Paper No. 107883, 118, 2021.

\bibitem[BR93]{ribenboim_ordered}
A.~Benhissi and P.~Ribenboim.
\newblock Ordered rings of generalized power series.
\newblock In {\em Ordered algebraic structures ({G}ainesville, {FL}, 1991)},
  pages 99--109. Kluwer Acad. Publ., Dordrecht, 1993.

\bibitem[CC11]{CC}
A.~Connes and C.~Consani.
\newblock The hyperring of ad\`{e}le classes.
\newblock {\em J. Number Theory}, 131(2):159--194, 2011.

\bibitem[CGR20]{CGR}
A.~Chapman, L.~Gatto, and L.~Rowen.
\newblock Clifford semialgebras.
\newblock {\em Rendiconti del Circolo Matematico di Palermo Series 2}, 2020.

\bibitem[Dre86]{Dr}
A.~Dress.
\newblock Duality theory for finite and infinite matroids with coefficients.
\newblock {\em Advances in Mathematics}, 59:97--123, 1986.

\bibitem[DW92]{DrW}
A.~Dress and W.~Wenzel.
\newblock Valuated matroids.
\newblock {\em Advances in Mathematics}, 93(2):214--250, 1992.

\bibitem[Gau92]{Ga}
S.~Gaubert.
\newblock {\em Th\'{e}orie des syst\`{e}mes lin\'{e}aires dans les dio\"\i
  des}.
\newblock Phd thesis, \'{E}cole des Mines de Paris, 1992.

\bibitem[GJL17]{GJL}
J.~Giansiracusa, J.~Jun, and O.~Lorscheid.
\newblock On the relation between hyperrings and fuzzy rings.
\newblock {\em Beitr{\"a}ge zur Algebra und Geometrie / Contributions to
  Algebra and Geometry}, 58(4):735--764, Nov 2017.

\bibitem[Gol92]{golan92}
J.~Golan.
\newblock {\em The theory of semirings with applications in mathematics and
  theoretical computer science}, volume~54.
\newblock Longman Sci \&\ Tech., 1992.

\bibitem[GR20]{GaR}
L.~Gatto and L.~Rowen.
\newblock Grassmann semialgebras and the {C}ayley-{H}amilton theorem.
\newblock {\em Proc. Amer. Math. Soc. Ser. B}, 7:183--201, 2020.

\bibitem[Gun21]{gunn}
T.~Gunn.
\newblock A {N}ewton polygon rule for formally-real valued fields and
  multiplicities over the signed tropical hyperfield, 2021.
\newblock arXiv:1911.12274v2.

\bibitem[Gun22]{gunn2}
T.~Gunn.
\newblock Tropical extensions and baker-lorscheid multiplicities for idylls,
  2022.
\newblock arXiv:2211.06480.

\bibitem[Hen13]{Henry}
S.~Henry.
\newblock Symmetrization of mono\"\i ds as hypergroups.
\newblock arXiv:1309.1963, 2013.

\bibitem[Hig52]{higman}
G.~Higman.
\newblock Ordering by divisibility in abstract algebras.
\newblock {\em Proc. London Math. Soc. (3)}, 2:326--336, 1952.

\bibitem[IKR11]{IKR}
Z.~Izhakian, M.~Knebusch, and L.~Rowen.
\newblock Supertropical semirings and supervaluations.
\newblock {\em J. Pure Appl. Algebra}, 215(10):2431--2463, 2011.

\bibitem[IKR13]{IKR0}
Z.~Izhakian, M.~Knebusch, and L.~Rowen.
\newblock Categorical notions of layered tropical algebra and geometry.
\newblock In {\em Algebraic and combinatorial aspects of tropical geometry},
  volume 589 of {\em Contemporary Mathematics}, pages 191--234. American
  Mathematical Society, 2013.

\bibitem[IR09]{IzhakianRowen2009TropicalRank}
Z.~Izhakian and L.~Rowen.
\newblock The tropical rank of a tropical matrix.
\newblock {\em Commun. in Algebra}, 37(11):3912--3927, 2009.

\bibitem[IR10]{IR}
Z.~Izhakian and L.~Rowen.
\newblock Supertropical algebra.
\newblock {\em Advances in Mathematics}, 225(4):2222--2286, 2010.

\bibitem[IR11a]{IRh}
Z.~Izhakian and J.~Rhodes.
\newblock New representations of matroids and generalizations.
\newblock arXiv:1103.0503, 2011.

\bibitem[IR11b]{IzhakianRowen2008Matrices2}
Z.~Izhakian and L.~Rowen.
\newblock Supertropical matrix algebra ii: Solving tropical equations.
\newblock {\em Israel Journal of Mathematics}, 186(1):69--97, 2011.

\bibitem[Izh06]{izhakianthesis}
Z.~Izhakian.
\newblock {\em Tropical mathematics and representations of polyhedral objects}.
\newblock Phd thesis, School of Mathematical Sciences and Computer Science, Tel
  Aviv University, 2006.

\bibitem[JMR20]{JMR}
J.~Jun, K.~Mincheva, and L.~Rowen.
\newblock Projective systemic modules.
\newblock {\em J. Pure and Applied Alg.}, 224(5), 2020.

\bibitem[JMR22]{JMR1}
J.~Jun, K.~Mincheva, and L.~Rowen.
\newblock T-semiring pairs.
\newblock {\em Kybernetika}, 2022.

\bibitem[JR20]{JuRo}
J.~Jun and L.~Rowen.
\newblock Categories with negation morphisms.
\newblock In {\em Categorical, Homological and Combinatorial Methods in Algebra
  (AMS Special Session in honor of S.K. Jain's 80th birthday)}, pages 221--270.
  2020.

\bibitem[Jun18]{Ju}
J.~Jun.
\newblock Algebraic geometry over hyperrings.
\newblock {\em Advances in Math.}, 323:142--192, 2018.

\bibitem[Kra83]{krasner}
M.~Krasner.
\newblock A class of hyperrings and hyperfields.
\newblock {\em Internat. J. Math. \& Math. Sci.}, 6(2):307--312, 1983.

\bibitem[Lor12]{Lor1}
O.~Lorscheid.
\newblock The geometry of blueprints. part i: Algebraic background and scheme
  theory.
\newblock {\em Adv. Math.}, 229(3), 2012.
\newblock arXiv:1103.1745v2.

\bibitem[Lor16]{Lor2}
O.~Lorscheid.
\newblock A blueprinted view on {$\mathbb F_1$}-geometry.
\newblock In {\em Absolute arithmetic and {$\mathbb F_{1}$}-geometry}, pages
  161--219. Eur. Math. Soc., Z\"{u}rich, 2016.
\newblock See also arXiv:1301.0083v2.

\bibitem[Mac88]{maclane}
Saunders MacLane.
\newblock {\em Categories for the working mathematician.}, volume~5 of {\em
  Grad. Texts Math.}
\newblock New York etc.: Springer-Verlag, 4th corrected printing edition, 1988.

\bibitem[Mar06]{M}
M.~Marshall.
\newblock Real reduced multirings and multifields.
\newblock {\em J. Pure and Applied Algebra}, 205:452--468, 2006.

\bibitem[MS15]{MaclaganSturmfels}
D.~Maclagan and B.~Sturmfels.
\newblock {\em Introduction to tropical geometry}.
\newblock AMS, 2015.

\bibitem[Plu90]{Pl}
M.~Plus.
\newblock Linear systems in $(\max,+)$-algebra.
\newblock In {\em Proceedings of the 29th Conference on Decision and Control},
  volume~1, pages 151--156, Honolulu, Dec. 1990.
\newblock M. Plus is a collective name for M. Akian, G. Cohen, S. Gaubert, R.
  Nikoukhah, and J.-P. Quadrat.

\bibitem[RGST05]{theobald_first_steps}
J.~Richter-Gebert, B.~Sturmfels, and T.~Theobald.
\newblock First steps in tropical geometry.
\newblock In G.~L. Litvinov and V.~P. Maslov, editors, {\em Idempotent
  Mathematics and Mathematical Physics}, volume 377 of {\em Contemp. Math.},
  pages 289--317. AMS, 2005.

\bibitem[Rib94]{MR1289092}
P.~Ribenboim.
\newblock Rings of generalized power series. {II}. {U}nits and zero-divisors.
\newblock {\em J. Algebra}, 168(1):71--89, 1994.

\bibitem[Row21]{Row21}
L.H. Rowen.
\newblock Algebras with a negation map.
\newblock {\em European Journal of Mathematics}, 2021.
\newblock originally entitled {\em Symmetries in tropical algebra},
  arXiv:1602.00353v5.

\bibitem[RS84]{ReS}
C.~Reutenauer and H.~Straubing.
\newblock Inversion of matrices over a commutative semiring.
\newblock {\em J. Algebra}, 88(2):350--360, 1984.

\bibitem[RS15]{RhS}
J.~Rhodes and P.V. Silva.
\newblock {\em Boolean Representations of Simplicial Complexes and Matroids}.
\newblock Springer Monographs in Mathematics. Springer, 2015.

\bibitem[Vir89]{viro}
O.~Ya. Viro.
\newblock Real plane algebraic curves: constructions with controlled topology.
\newblock {\em Algebra i Analiz}, 1(5):1--73, 1989.

\bibitem[Vir01]{viro2001dequantization}
O.~Ya. Viro.
\newblock Dequantization of real algebraic geometry on logarithmic paper.
\newblock In {\em European Congress of Mathematics}, pages 135--146. Springer,
  2001.

\bibitem[Vir10]{Vi}
O.~Ya. Viro.
\newblock Hyperfields for tropical geometry i. hyperfields and dequantization.
\newblock arXiv:AG/1006.3034, 2010.

\end{thebibliography}
\bibliographystyle{alpha}
\end{document}